\DeclareMathAlphabet{\mathcalalt}{OMS}{cmsy}{m}{n}
\newcommand{\tfs}[1]
{
	\ifthenelse{\equal{\f@shape}{n}}{\ensuremath{\mathrm{#1}}}
	{\ifthenelse{\equal{\f@shape}{sc}}{\ensuremath{\mathrm{#1}}}
		{\ifthenelse{\equal{\f@shape}{it}}{\ensuremath{\mathit{#1}}}
			{\ifthenelse{\equal{\f@shape}{sl}}{\ensuremath{\mathit{#1}}}{}	
			}
		}
	}
}
\newcommand{\btfs}[1]
{
	\ifthenelse{\equal{\f@shape}{n}}{\ensuremath{\mathrm{#1}}}
	{\ifthenelse{\equal{\f@shape}{sc}}{\ensuremath{\mathrm{#1}}}
		{\ifthenelse{\equal{\f@shape}{it}}{\ensuremath{\mathit{#1}}}
			{\ifthenelse{\equal{\f@shape}{sl}}{\ensuremath{\mathit{#1}}}{}	
			}
		}
	}
}
\theoremstyle{plain}
\newtheorem{theorem0}{Theorem}[section]
\newtheorem{theorem}[theorem0]{Theorem}
\newtheorem{proposition}[theorem0]{Proposition}
\newtheorem{lemma}[theorem0]{Lemma}
\newtheorem{corollary}[theorem0]{Corollary}
\newtheorem*{theorem*}{Theorem}
\newtheorem*{proposition*}{Proposition}
\newtheorem*{lemma*}{Lemma}
\newtheorem*{corollary*}{Corollary}
\theoremstyle{definition}
\newtheorem{definition}[theorem0]{Definition}
\newtheorem{remark}[theorem0]{Remark}
\newtheorem*{definition*}{Definition}
\newtheorem*{example*}{Example}
\newtheorem*{remark*}{Remark}
\setlist[enumerate,1]{label=\textup{(\arabic*)},ref=\arabic*}
\setlist[enumerate,2]{label=\textup{(\alph*)},ref=\arabic{enumi}.\alph*}
\setlist[enumerate,3]{label=\textup{(\roman*)},ref=\arabic{enumi}.\alph{enumii}.\roman*}
\setlist[enumerate,4]{label=\textup{(\Alph*)},ref=\arabic{enumi}.\alph{enumii}.\roman{enumiii}.\Alph*}
\crefname{theorem}{Theorem}{Theorems}
\crefname{proposition}{Proposition}{Propositions}
\crefname{lemma}{Lemma}{Lemmas}
\crefname{corollary}{Corollary}{Corollaries}
\crefname{conjecture}{Conjecture}{Conjectures}
\crefname{definition}{Definition}{Definitions}
\crefname{example}{Example}{Examples}
\crefname{remark}{Remark}{Remarks}
\crefname{equation}{equation}{equations}
\crefname{section}{Section}{Sections}
\crefname{subsection}{Section}{Sections}
\crefname{subsubsection}{Section}{Sections}
\crefname{enumi}{part}{parts}
\crefname{enumii}{part}{parts}
\crefname{enumiii}{part}{parts}
\crefname{enumiv}{part}{parts}
\newcommand{\enclosepart}[1]{(#1)}
\newcommand{\partref}[1]{\enclosepart{\ref{#1}}}
\numberwithin{equation}{section}
\newcommand{\tpskip}{\hskip  0.05555555555em}
\newcommand{\tnskip}{\hskip -0.05555555555em}
\newcommand{\bbfont}{\mathbb}
\newcommand{\CC}{{\bbfont C}}
\newcommand{\RR}{{\bbfont R}}
\newcommand{\TT}{{\bbfont T}}
\newcommand{\ulp}{{\textup{(}}}
\newcommand{\urp}{{\textup{)}}}
\newcommand{\uppars}[1]{\ulp #1\urp}
\newcommand{\abs}[1]{{\lvert #1 \rvert}}
\newcommand{\norm}[1]{{\lVert #1 \rVert}}
\newcommand{\braces}[1]{{\{ #1\}}}
\newcommand{\lrabs}[1]{{\left\lvert #1 \right\rvert}}
\newcommand{\set}[1]{\braces{\,#1\,}}
\newcommand{\Dc}{De\-de\-kind com\-plete}
\newcommand{\sDc}{$\sigma$-De\-de\-kind com\-plete}
\newcommand{\noarg}{\,\cdot\,}
\newcommand{\up}{{\Uparrow}}
\newcommand{\down}{{\Downarrow}}
\newcommand{\ups}{{\uparrow}}
\newcommand{\downs}{{\downarrow}}
\newcommand{\supp}{\mathrm{supp}}
\newcommand{\sosum}{\sigma\tnskip\tfs{o}\,\text{\textendash}\!\sum}
\newcommand{\oto}{\overset{\tfs{o}}{\to}}
\newcommand{\soto}{\overset{\sigma\tfs{o}}{\to}}
\newcommand{\Calgebra}{${\tfs{C}}^\ast$\!-algebra}
\newcommand{\Calgebras}{${\tfs{C}}^\ast$\!-algebras}
\newcommand{\sigalgebra}{$\sigma$-algebra}
\newcommand{\specop}{\sigma(\op)}
\newcommand{\borelspecop}{\borel(\specop)}
\newcommand{\pset}{X}
\newcommand{\ts}{X}
\newcommand{\proj}[1]{P_{#1}}
\newcommand{\iu}{{\tfs{i}\mkern1mu}}
\newcommand{\nb}{\tfs{nb}}
\newcommand{\reg}{\tfs{r}}
\newcommand{\zerofunction}{\mathbf{0} }
\newcommand{\onefunction}{\mathbf{1}}
\newcommand{\indicator}[1]{\chi_{#1}}
\newcommand{\pos}[1]{{#1^+}}
\newcommand{\seq}[1]{({#1}_n)_{n=1}^{\infty}}
\newcommand{\net}[1]{({#1}_\lambda)_{\lambda\in \Lambda}}
\newcommand{\widehatss}{{\widehat{\phantom{n}}}}
\newcommand{\ounorm}[1]{\lVert #1 \rVert_{\idmap}}
\newcommand{\idfunction}{\tfs{id}}
\newcommand{\idmap}{I}
\newcommand{\mult}{M}
\newcommand{\op}{T}
\newcommand{\posmap}{\pi}
\newcommand{\optwo}{S}
\newcommand{\vl}{E}
\newcommand{\bl}{E}
\newcommand{\vltwo}{F}
\newcommand{\vlC}{\vl_\CC}
\newcommand{\blC}{\bl_\CC}
\newcommand{\vltwoC}{\vltwo_\CC}
\newcommand{\posvl}{\pos{\vl}}
\newcommand{\posbl}{\pos{\bl}}
\newcommand{\posR}{\pos{\RR}}
\newcommand{\posRext}{\overline{\pos{\RR}}}
\newcommand{\cont}{\tfs{C}}
\newcommand{\conto}{\cont_0}
\newcommand{\contc}{\cont_{\tfs{c}}}
\newcommand{\conttsR}{\cont(\ts,\RR)}
\newcommand{\contctsR}{\contc(\ts,\RR)}
\newcommand{\conttsC}{\cont(\ts,\CC)}
\newcommand{\contotsC}{\conto(\ts,\CC)}
\newcommand{\contspectrumR}{\cont(\specop,\RR)}
\newcommand{\contspectrumC}{\cont(\specop,\CC)}
\newcommand{\linear}{{\tfs L}}
\newcommand{\nbounded}{\linear_{\nb}}
\newcommand{\regular}{\linear_{\reg}}
\newcommand{\nboundedblC}{\nbounded(\blC)}
\newcommand{\regularblC}{\regular(\blC)}
\newcommand{\centre}{\mathcalalt{Z}}
\newcommand{\centrebl}{\centre(\bl)}
\newcommand{\centreblC}{\centre(\blC)}
\newcommand{\mss}{\Delta}
\newcommand{\alg}{\Omega}
\newcommand{\borel}{\mathscr B}
\newcommand{\npm}{\mu}
\newcommand{\imnpm}{\npm_{\op}}
\newcommand{\di}[1]{\,{\tfs d} #1}
\newcommand{\orderintegralgeneral}[3]{{\int_{#1}^{\tfs {o}}\!\!\! {#2}\di {#3}}}
\newcommand{\orderintegralspectrum}[3]{{\int_{#1}^{\tfs {o}}\!\!\!\!\!\!\!\! {#2}\di {#3}}}
\newcommand{\ointnpmgeneral}[1]{\orderintegralgeneral{\pset}{#1}{\npm}}
\newcommand{\ointts}[1]{\orderintegralgeneral{\ts}{#1}{\npm}}
\newcommand{\ointspectrum}[1]{\orderintegralspectrum{\sigma(\op)}{#1}{\imnpm}}
\newcommand{\ointspectrumalt}[1]{\orderintegralspectrum{\sigma(\op)}{#1}{\tnskip\tnskip\nu}}
\newcommand{\boundedmeasfunspectrumR}  {{\mathcal B}(\specop,\RR)}
\newcommand{\boundedmeasfunspectrumC}  {{\mathcal B}(\specop,\CC)}
\begin{document}

%%%%%%%%%%%%%%%%%%%%%%%%%%%%%%%%% BEGIN FRONTMATTER %%%%%%%%%%%%%%%%%%%%%%%%%%%%%%%%%%%%
%\title[The ideal centre of a complex Banach lattices]{The ideal centre of a complex Banach lattice}
\title[Central operators on complex Banach lattices]{Central operators on complex Banach lattices}

\author{Marcel de Jeu}
\address[Marcel de Jeu]{Mathematical Institute, Leiden University, P.O.\ Box 9512, 2300 RA Leiden, The Netherlands\\
and\\
Department of Mathematics and Applied Mathematics, University of Pretoria, Corner of Lynnwood Road and Roper Street, Hatfield 0083, Pretoria,
South Africa}
\email[Marcel de Jeu]{mdejeu@math.leidenuniv.nl}

\author{Xingni Jiang}
\address[Xingni Jiang]{College of Mathematics, Sichuan University, No.\ 24, South Section, First Ring Road, Chengdu, P.R.\ China}
\email[Xingni Jiang]{x.jiang@scu.edu.cn}

\begin{abstract}We show that the centre of a Dedekind complete complex Banach lattice is a commutative $\mathrm{C}^\ast$-algebra in the order unit norm. This implies that the order unit norm and the operator norm coincide. As an application of the latter, a Fuglede--Putnam--Rosenblum-type theorem is established. Under an extra condition on the underlying real Banach lattice, which is satisfied when it is order continuous, a spectral theorem is given for the centre of the complex Banach lattice as a whole and for an individual central operator. The ensuing functional calculus for an individual operator is applied to show that central operators have many spectral properties similar to those of normal operators on complex Hilbert spaces. For example, when the spectrum is countable every element is the sum of an order convergent series of eigenvectors.
\end{abstract}

%\makeatletter{\renewcommand*{\@makefnmark}{}
%	\date{}\footnote{{File: \currfilebase. Compiled: \today,\,\currenttime.}}
%	\makeatother}

\subjclass[2010]{Primary 47B65; Secondary 46B42, 47A60, 47B15}
\keywords{$\mathrm C^\ast$\!-algebra, central operator, complex Banach lattice, eigenvalue, Fuglede--Putnam--Rosenblum theorem, functional calculus, spectral measure.}
\maketitle

\section{Introduction and overview}\label{sec:introduction_and_overview}

\noindent Central operators on real Banach lattices form a well-studied class operators. This is much less so for central operators on complex Banach lattices, the study of which we take up in this paper with complex \Calgebras\ central to our approach. The latter is in contrast to the current literature on the real case, where it is exploited that in the order unit norm the central operators are isometrically isomorphic to $\conttsR$ as a Banach lattice algebra for a compact Hausdorff space $\ts$, but where it is hardly ever used that $\conttsR$ is even a real \Calgebra.

This paper is organised as follows.

In \cref{sec:preliminaries}, we include the necessary notation and definitions and a little material about complexifications. We give a concise definition of the so-called order integral. This generalisation of the Lebesgue integral is the language for the representation theorems in \cite{de_jeu_jiang:2021c} and \cite{de_jeu_jiang:2022b} that are at the basis of most of the spectral theory in the current paper; it is also the means to establish isomorphisms between vector and Banach lattices of operators and vector and Banach lattice of measures in \cite{de_jeu_jiang:2025}.\footnote{The seminal work for this type of integrals is by Wright; see \cite{wright:1969a,wright:1969b}, among others. Independently of the present authors, Kusraev and Tasoev developed the theory of the closely related so-called Kantorovich-Wright integral; see\cite{kusraev_tasoev:2017}.} Two (folklore) facts about complex \Calgebras\ are also included in this section.

In \cref{sec:central_operators}, various aspects of the central operators on complex Banach lattices are studied from a complex \Calgebra\ point of view.  This makes it easy to show that the order unit norm and the operator norm on the centre are equal, which can then be used to establish a Fuglede-Putnam-Rosenblum--type theorem.

The spectral theory for special classes of operators on complex Banach lattices is rather limited. There appears to be little, if anything at all, in addition to the work in \cite{wickstead:1982} for compact lattice homomorphisms on complex Banach lattices. In \cref{sec:spectral_theory}, we develop such a theory for the central operators on complex Banach lattices. A representation theorem from \cite{de_jeu_jiang:2021c} that has its roots in \cite{de_jeu_jiang:2022b} gives a spectral measure for the whole centre, which is then used to obtain one for an individual central operator. The resulting functional calculus is employed to show that central operators share many properties with normal operators on complex Hilbert spaces, including the development of an arbitrary element as an unconditionally $\sigma$-order convergent series of eigenvectors when the spectrum is countable.

\section{Preliminaries}\label{sec:preliminaries}

\noindent All real vector lattices in this paper are supposed to be Archimedean. All \Calgebras\ are supposed to be complex. All operators are supposed to be linear; the identity operator on a vector space is denoted by $\idmap$. The characteristic function of a subset $S$ of a set $\pset$ is denoted by $\chi_S$. By $\zerofunction$, resp.\ $\onefunction$, we mean a function that is zero, resp.\ 1, everywhere. We use self-evident notations for spaces of continuous functions. The Borel \sigalgebra\ of a topological space $\ts$ is denoted by $\borel(\ts)$.

\subsection{Complexification}\label{subsec:complexification}
We shall employ the usual basic definitions and notations regarding the complexifications of uniformly complete vector real lattices and of real Banach lattices. In particular, we let $\regularblC$,  denote the regular operators on a complex vector lattice. For details, we refer to \cite[Section~3.2]{abramovich_aliprantis_INVITATION_TO_OPERATOR_THEORY:2002},  \cite[Chapter~2, \S~11]{schaefer_BANACH_LATTICES_AND_POSITIVE_OPERATORS:1974}, \cite[Sections~91 and~92]{zaanen_RIESZ_SPACES_VOLUME_II:1983}, and \cite[Chapters~6 and 19]{zaanen_INTRODUCTION_TO_OPERATOR_THEORY_IN_RIESZ_SPACES:1997}.

For $z=x+\iu y$ in a complex vector lattice $\vlC$ with real part $x\in\vl$ and imaginary part $y\in\vl$, we set $\overline{z}\coloneqq x-\iu y$. Then $\abs{z}=\abs{\overline z}$.

We extend the notions of order convergence and of $\sigma$-order convergence to $\vlC$ by saying that $z_\alpha\oto z$ for a net $(z_\alpha)_{\alpha\in A}$ in $\vlC$ when there exists a net $(u_\beta)_{\beta\in B}$ in $\vl$ such that $u_\beta\downarrow 0$ in $\vl$, and with the property that, for every $\beta_0\in B$, there exists an $\alpha_0\in A$ such  that $\abs{z_\alpha-z}\leq u_\beta$ for all $\alpha\geq\alpha_0$. We say that $z_n\soto z$ for a sequence $(z_n)_{n=1}^\infty$ in $\vlC$ when there exists a sequence $(u_n)_{n=1}^\infty$ in $\vl$ such that $u_n\downarrow 0$ in $\vl$ and  $\abs{z_n-z}\leq u_n$ for all $n$. For a sequence $(z_n)_{n=1}^\infty$ in $\vlC$, we shall write  $z=\sosum_{n=1}^\infty z_n$ when $\sum_{n=1}^N z_n\soto z$ as $N\to\infty$.

When $\vltwoC$ is a complex vector lattice, an operator $\op\colon\vlC\to\vltwoC$ is said to be order continuous, resp.\ $\sigma$-order continuous, when it preserves order convergence of nets,  resp.\ $\sigma$-order convergence of sequences. This is the case if and only if the real and imaginary parts of $\op$ have the respective property.

Let $\ts$ be a compact Hausdorff space. The natural complex algebra isomorphism between $\conttsR+\iu\conttsR$ and $\conttsC$ is an isometric vector lattice isomorphism between $\conttsR_\CC$ and $\conttsC$.

\subsection{Order integrals}\label{subsec:order_integrals}

We briefly summarise the relevant facts about measures with values in the positive cone $\vl$ of a \sDc\ vector lattice $\vl$ and the integrals that they define. For some of the general representation theorems in \cite{de_jeu_jiang:2022b}, infinite measures with values in the extended positive cone of $\vl$ are needed but in the current paper finite ones are sufficient. We shall confine ourselves to this case.

\begin{definition}\label{def:positive_pososext_valued_measure}
	Let $\pset$ be a set, let $\alg$ be a \sigalgebra\ of subsets of $\ts$, and let $\vl$ be a \sDc\  vector real lattice. An \emph{$\vl$-valued measure} is a map $\npm:\alg\rightarrow \posvl$ such that the following hold:
	\begin{enumerate}
		\item $\npm(\emptyset)=0$;\label{part:pososext_valued_measure_1}
		\item if $\seq{\mss}$ is a pairwise disjoint sequence in $\alg$, then
		\begin{equation*}\label{eq:sigma_additivity}
			\npm\left(\bigcup_{n=1}^\infty\mss_n\right)=\sosum_{n=1}^\infty\npm(\mss_n).
		\end{equation*}		
	\end{enumerate}
	When $\vl$ is also a real associative algebra, $\npm$ is said to be a \emph{spectral measure} when $\npm(\mss_1\cap\mss_2)=\npm(\mss_1)\cdot\npm(\mss_2)$ for $\mss_1,\mss_2\in\alg$.\footnote{It is not required that $\npm(\pset)$ equals the identity element of $\vl$ when there is one.}
\end{definition}

An \emph{elementary} function is a measurable function $\varphi\colon\pset\to\posR$ that can be written as $\varphi=\sum_{i=1}^n r_i\indicator{\mss_i}$ with $\mss_i\in\alg$. The \emph{order integral} of $\varphi$ is then defined by setting $\ointnpmgeneral{\,\,\varphi}\coloneqq\sum_{i=1}^n r_i\npm(\mss_i)$; this does not depend on the choice of a decomposition $\varphi=\sum_{i=1}^n r_i\indicator{\mss_i}$. For an arbitrary measurable $f\colon\ts\to\posRext$, one chooses a sequence $\varphi_n$ of elementary functions such that $\varphi_n(x)\uparrow f(x)$ in $\posRext$ for every $x\in\ts$, and defines  $\ointnpmgeneral{\,\,f}\coloneqq\sup_{n\geq 1} \ointnpmgeneral{\,\,\varphi_n}$ as an element of the extended positive cone of $\vl$.  This is independent of the choice of the sequence. For a measurable $f\colon\pset\to\RR$ such that $\ointnpmgeneral{\,\,f^+}$ and $\ointnpmgeneral{\,\,f^-}$ are both finite, its order integral is defined by setting  $\ointnpmgeneral{\,\,f}\coloneqq\ointnpmgeneral{\,\,f^+}-\ointnpmgeneral{\,\,f^-}$. For a measurable complex-valued function $f$ such that $\ointnpmgeneral{\,\,\abs{f}}<\infty$, its order integral is defined in the obvious way. The further theory for this integral is developed in some detail in \cite{de_jeu_jiang:2022a}, including results such as the monotone convergence theorem, Fatou's lemma, and the dominated convergence theorem. The triangle inequality
\begin{equation*}\label{eq:triangle_inequality}
	\lrabs{\ointnpmgeneral{f}}\leq \ointnpmgeneral{\abs{f}}
\end{equation*}
holds for integrable $f\colon\pset\to\RR$; see \cite[Lemma~6.7]{de_jeu_jiang:2022a}. Since $\vl$, being \sDc\, is uniformly complete (see \cite[Theorem~12.8]{zaanen_INTRODUCTION_TO_OPERATOR_THEORY_IN_RIESZ_SPACES:1997}), its complexification $\vlC$ is defined. It is easily seen from the real case that the triangle inequality holds for integrable complex-valued $f$.

As for the case where $\vl=\RR$, for a compact Hausdorff space $\ts$ there are regularity properties to be considered for measures on $\borel(\ts)$.

\begin{definition}\label{def:regularity_of_measures}
	Let $\ts$ be a compact Hausdorff space, let $\vl$ be a \Dc\ real vector lattice, and let $\npm\colon\borel(\ts)\to\posvl$ be a measure. Then $\npm$ is a \emph{Borel measure}. It is said to be:
		\begin{enumerate}
		\item \emph{inner regular at $\mss\!\in\!\borel(\ts)$} if $\npm(\mss)\!=\!\sup\set{\!\npm(K)\!:\! K\ \text{is compact and}\ K\!\subseteq\! \mss\!}$;
		\item \emph{outer regular at $\mss\in\borel(\ts)$} if $\npm(\mss)=\inf\set{\!\npm(V): V\ \text{is open and}\ \mss\subseteq V\!}$;
		\item a \emph{regular Borel measure} if $\npm$ is inner regular at all open subsets of $\ts$ and outer regular at all Borel sets.\footnote{We follow the terminology as in \cite{aliprantis_burkinshaw_PRINCIPLES_OF_REAL_ANALYSIS_THIRD_EDITION:1998} for $\vl=\RR$. When following \cite{folland_REAL_ANALYSIS_SECOND_EDITION:1999}, our regular Borel measures would have been called Radon measures.}
	\end{enumerate}
\end{definition}

\subsection{Representation theorems}\label{subsec:representation_theorems} We shall now mention the relevant results from  \cite{de_jeu_jiang:2021c} and \cite{de_jeu_jiang:2022b}. For a locally compact Hausdorff space $\ts$, a positive linear map $\pi\colon \contctsR\to\vl$ with values in a suitable \Dc\ vector lattice $\vl$ can be represented by a finite Borel measure on $\ts$, provided that it maps norm bounded subsets of $\contctsR$ to order bounded subsets of $\vl$.\footnote{When $\vl$ is an order continuous Banach lattice, no  such restriction is necessary. The representing measure can then be infinite; see \cite[Theorem~4.2]{de_jeu_jiang:2022b}.} In our case, $\ts$ will be compact and then condition is automatically met.

We merge the specialisations of \cite[Theorem~6.4]{de_jeu_jiang:2021c} and \cite[Theorem~5.4]{de_jeu_jiang:2022b} to the case of compact $\ts$ in the following result, which is the basis for obtaining the spectral measures in \cref{sec:spectral_theory}.
We recall that a real vector lattice is called \emph{normal} when its order continuous dual separates its points. All order continuous real Banach lattices are normal.

\begin{theorem}\label{res:riesz_representation_theorems_merged}
	Let $\ts$ be a compact Hausdorff space, let $\vl$ be a \Dc\ and normal real vector lattice, and let $\posmap:\conttsR\rightarrow\vl$ be a positive linear map. Then there exists a unique regular Borel measure $\npm:\borel(\ts)\to\posvl$ on the Borel \sigalgebra\ of $\ts$ such that
	\[
	\posmap(f)=\ointnpmgeneral{f}
	\]
	for $f\in\conttsR$.
	
	When $V$ is a non-empty open subset of $\ts$,
	\begin{align}\label{eq:measure_of_open_subset}
		\npm(V)&=\sup\,\{\posmap(f) : f\in\conttsR,\, \zerofunction\leq f\leq\onefunction,\, \supp \,f\subseteq V\}.\\
	\intertext{When $K$ is a non-empty compact subset of $\ts$,}
	\label{eq:measure_of_compact_subset}
		\npm(K)&=\inf\,\{\posmap(f) : f\in\conttsR,\, \zerofunction\leq f\leq\onefunction,\,f(x)=1\text{ for }x\in K\}.
	\end{align}
	Suppose that $\vl$ is also an associative real algebra in which the product of two positive elements is positive, and in which 	$0\leq a_\lambda\uparrow b$ implies that $a_\lambda b\uparrow ab$ and $ba_\lambda \uparrow ba$ for $b\in\posvl$. If $\posmap$ is also an algebra homomorphism, then $\npm$ is a spectral measure.
\end{theorem}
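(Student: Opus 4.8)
The plan is to prove the multiplicativity $\npm(\mss_1\cap\mss_2)=\npm(\mss_1)\cdot\npm(\mss_2)$ first for open sets, where the representation formula \eqref{eq:measure_of_open_subset} gives a handle, and then to extend it to all Borel sets by a Dynkin $\pi$-$\lambda$ argument. The hypothesis that $\posmap$ is an algebra homomorphism means $\posmap(fg)=\posmap(f)\posmap(g)$ for $f,g\in\conttsR$; in the language of the representation this is the multiplicativity of the order integral, $\ointnpmgeneral{fg}=\ointnpmgeneral{f}\cdot\ointnpmgeneral{g}$. The standing hypothesis on products will be used in the form: for a fixed $b\in\posvl$, both $a\mapsto ab$ and $a\mapsto ba$ carry positive increasing nets to increasing nets with the expected supremum (the general-multiplier case follows from the stated hypothesis, which concerns only multiplication by the limit of the net, by applying it to $(a_\lambda+b)$ and cancelling the known limits). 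I shall also use the elementary fact that $\sigma$-additivity of $\npm$ yields $\npm(A_n)\uparrow\npm(A)$ whenever $A_n\uparrow A$ in $\borel(\ts)$.

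For the open-set case, fix non-empty open $V_1,V_2\subseteq\ts$ and set $D_{V_i}\coloneqq\{f\in\conttsR:\zerofunction\le f\le\onefunction,\ \supp f\subseteq V_i\}$, an upward directed set for which \eqref{eq:measure_of_open_subset} says the increasing net $(\posmap(f))_{f\in D_{V_i}}$ has supremum $\npm(V_i)$. For $\npm(V_1\cap V_2)\le\npm(V_1)\npm(V_2)$: given $h\in D_{V_1\cap V_2}$, normality of the compact Hausdorff space $\ts$ provides, for each $i$, an open $W_i$ with $\supp h\subseteq W_i\subseteq\overline{W_i}\subseteq V_i$, and Urysohn's lemma then produces $f_i\in D_{V_i}$ with $f_i\equiv\onefunction$ on $\supp h$; since $h\le f_1f_2$ pointwise we get $\posmap(h)\le\posmap(f_1f_2)=\posmap(f_1)\posmap(f_2)\le\npm(V_1)\npm(V_2)$, and taking the supremum over $h\in D_{V_1\cap V_2}$ gives the inequality. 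For the reverse: whenever $f\in D_{V_1}$ and $g\in D_{V_2}$, one has $fg\in D_{V_1\cap V_2}$, hence $\posmap(f)\posmap(g)=\posmap(fg)\le\npm(V_1\cap V_2)$; holding $g$ fixed and letting $f$ run over $D_{V_1}$, order continuity of right multiplication by $\posmap(g)$ gives $\npm(V_1)\posmap(g)\le\npm(V_1\cap V_2)$, and then letting $g$ run over $D_{V_2}$, order continuity of left multiplication by $\npm(V_1)$ gives $\npm(V_1)\npm(V_2)\le\npm(V_1\cap V_2)$. (Because $\cap$ is symmetric this also forces $\npm(V_1)\npm(V_2)=\npm(V_2)\npm(V_1)$, so no commutativity of $\vl$ is needed here or below.) The case in which $V_1$ or $V_2$ is empty is trivial.

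For the extension, fix an open set $V$ and put $\mathcal L_V\coloneqq\{\mss\in\borel(\ts):\npm(V\cap\mss)=\npm(V)\npm(\mss)\}$. By the previous step $\mathcal L_V$ contains every open set; it contains $\ts$; it is closed under proper differences by finite additivity of $\npm$ and distributivity; and it is closed under countable increasing unions, using $\npm(A_n)\uparrow\npm(A)$ together with order continuity of left multiplication by the fixed element $\npm(V)\in\posvl$. Since the open sets form a $\pi$-system generating $\borel(\ts)$, Dynkin's $\pi$-$\lambda$ theorem gives $\mathcal L_V=\borel(\ts)$; thus $\npm(V\cap\mss)=\npm(V)\npm(\mss)$ for every open $V$ and every Borel $\mss$. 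Now fix a Borel set $\mss_2$ and run the same argument with $\mathcal M_{\mss_2}\coloneqq\{\mss_1\in\borel(\ts):\npm(\mss_1\cap\mss_2)=\npm(\mss_1)\npm(\mss_2)\}$: it contains every open set by what was just shown and is again a $\lambda$-system (now invoking right multiplication by $\npm(\mss_2)$), so $\mathcal M_{\mss_2}=\borel(\ts)$, which is precisely the spectral-measure identity. The step needing the most care is the Urysohn construction in the open-set case — one must insert the intermediate open set $W_i$ so that the approximating functions have support inside $V_i$ and not merely inside $\overline{V_i}$ — and, to a lesser extent, the bookkeeping with the possible non-commutativity of $\vl$; the rest is a routine interplay of $\sigma$-additivity with order continuity of multiplication.
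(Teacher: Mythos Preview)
The paper does not prove this theorem in-text: it is presented as the merger of specialisations of results from the authors' earlier work, so there is no in-paper argument to compare against directly. Your proposal addresses only the final clause---that $\npm$ is a spectral measure when $\posmap$ is multiplicative---taking existence, uniqueness, and the formulas \eqref{eq:measure_of_open_subset} and \eqref{eq:measure_of_compact_subset} as already established. For that clause your argument is correct: the open-set step is sound (the Urysohn insertion of the intermediate $W_i$ is exactly what is needed to ensure $\supp f_i\subseteq V_i$ rather than merely $\subseteq\overline{V_i}$), and the two-stage $\pi$--$\lambda$ extension is a clean way to pass from open sets to all Borel sets without invoking regularity a second time. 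Your trick for upgrading the stated order-continuity hypothesis to continuity of multiplication by an arbitrary fixed positive element also works: from $(a_\lambda+c)(a+c)\uparrow(a+c)^2$ and $a_\lambda a\uparrow a^2$ one subtracts the constant terms and uses that $a_\lambda c$ is increasing and bounded above by $ac$ to force its supremum to be $ac$. (One may alternatively read the printed hypothesis ``$a_\lambda\uparrow b$'' as a typo for ``$a_\lambda\uparrow a$'', in which case the general-multiplier form is assumed outright and no trick is needed.) The bookkeeping with possible non-commutativity of $\vl$ is handled correctly as well.
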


\subsection{$\text{C}^\ast$-algebras}\label{subsec:C-algebras}
We conclude with a reminder of two facts about \Calgebras. There is spectral permanence in this category which for unital commutative \Calgebras\ holds in an even wider context.

\begin{lemma}\label{res:spectral_permanence}
Let $A$ be a unital complex Banach subalgebra of a unital complex Banach algebra $B$. Suppose that exist a compact Hausdorff space $\ts$ and a surjective isometric algebra homomorphism $\varphi\colon A\to\conttsC$. Then $\sigma_B(a)=\varphi(a)(\ts)$.
\end{lemma}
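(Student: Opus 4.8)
The plan is to establish the two inclusions $\sigma_B(a)\subseteq\varphi(a)(\ts)$ and $\varphi(a)(\ts)\subseteq\sigma_B(a)$ separately. The first is routine, using only that $A$ is a unital subalgebra of $B$. The second is the substance: for $\lambda$ in the range of $\varphi(a)$ I would exhibit $a-\lambda 1$ as a topological divisor of zero in $A$, by transporting, via $\varphi^{-1}$, a suitable family of normalised bump functions in $\conttsC$; since the norm of $B$ restricts to that of $A$, the same construction shows that $a-\lambda 1$ is a topological divisor of zero in $B$, and such an element cannot be invertible, so $\lambda\in\sigma_B(a)$. (Here $1$ denotes the common identity of $A$ and $B$.)

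First I would clear away some bookkeeping. Since $\varphi$ is a surjective algebra homomorphism onto the unital algebra $\conttsC$, its value $\varphi(1)$ at the identity of $A$ is an idempotent acting as a two-sided identity on all of $\conttsC$, so $\varphi(1)=\onefunction$; in particular $\varphi$ is unital, and, $A$ being a unital subalgebra, the identities of $A$ and of $B$ coincide. Being isometric and surjective, $\varphi$ is a topological algebra isomorphism of $A$ onto $\conttsC$, whence $\sigma_A(a)=\sigma_{\conttsC}(\varphi(a))$; and for $g\in\conttsC$ one has that $g-\lambda\onefunction$ is invertible in $\conttsC$ exactly when $g$ omits the value $\lambda$, so $\sigma_{\conttsC}(g)=g(\ts)$. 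Hence $\sigma_A(a)=\varphi(a)(\ts)$. The inclusion $\sigma_B(a)\subseteq\varphi(a)(\ts)$ then follows at once: if $\lambda\notin\varphi(a)(\ts)=\sigma_A(a)$, the inverse of $a-\lambda 1$ in $A$ lies in $A\subseteq B$ and inverts $a-\lambda 1$ in $B$ as well, so $\lambda\notin\sigma_B(a)$.

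For the reverse inclusion I would argue by contradiction. Fix $x_0\in\ts$, set $\lambda\coloneqq\varphi(a)(x_0)$, and suppose $a-\lambda 1$ has an inverse $b$ in $B$. For each $\varepsilon>0$ the set $U_\varepsilon\coloneqq\set{x\in\ts:\abs{\varphi(a)(x)-\lambda}<\varepsilon}$ is an open neighbourhood of $x_0$, so, $\ts$ being compact Hausdorff, Urysohn's lemma furnishes $g_\varepsilon\in\conttsC$ with $\zerofunction\leq g_\varepsilon\leq\onefunction$, $g_\varepsilon(x_0)=1$, and $g_\varepsilon$ vanishing off $U_\varepsilon$; then $\norm{g_\varepsilon}_\infty=1$ while $\norm{(\varphi(a)-\lambda\onefunction)g_\varepsilon}_\infty\leq\varepsilon$. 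Put $h_\varepsilon\coloneqq\varphi^{-1}(g_\varepsilon)\in A\subseteq B$; since $\varphi$ is isometric and the norm of $B$ restricts to that of $A$, we get $\norm{h_\varepsilon}=1$ and $\norm{(a-\lambda 1)h_\varepsilon}\leq\varepsilon$. But then $1=\norm{h_\varepsilon}=\norm{b(a-\lambda 1)h_\varepsilon}\leq\norm{b}\,\varepsilon$ for every $\varepsilon>0$, which is absurd. Therefore $\lambda\in\sigma_B(a)$, and together with the first inclusion this gives $\sigma_B(a)=\varphi(a)(\ts)$.

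There is no serious obstacle; the step requiring the most care is the final one. It would not suffice merely to know that $a-\lambda 1$ fails to be invertible in $A$, since non-invertibility in a subalgebra need not persist in a larger one. What is needed, and what the bump functions deliver, is the quantitative statement that $a-\lambda 1$ is a topological divisor of zero; that property is clearly inherited by $B$ and is incompatible with invertibility. The only other things to be slightly careful about are the automatic unitality of $\varphi$ and the identification $\sigma_{\conttsC}(g)=g(\ts)$, both of which are immediate.
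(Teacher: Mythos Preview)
Your argument is correct and is essentially the same as the paper's: both prove the nontrivial inclusion $\varphi(a)(\ts)\subseteq\sigma_B(a)$ by localising at a point where $\varphi(a)$ attains the value $\lambda$, pulling back normalised bump functions via $\varphi^{-1}$ to exhibit $a-\lambda 1$ as a topological divisor of zero in $A$ (hence in $B$), and deriving a contradiction from an assumed inverse. Your write-up is more explicit about the bookkeeping (unitality of $\varphi$, the easy inclusion $\sigma_B(a)\subseteq\sigma_A(a)=\varphi(a)(\ts)$), whereas the paper takes these for granted and records only the key estimate.
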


\begin{proof}
We need to show that $a-\lambda 1$ is not invertible in $B$ when $\lambda\in \varphi(a)(\ts)$. Suppose, to the contrary, that there exists a $b\in B$ such that $(a-\lambda 1)b=1$. Arguing locally at a point of $\ts$ where $\varphi(a)$ has the value $\lambda$, one finds, for every $\varepsilon>0$, an $a_\varepsilon\in A$ such that $\norm{a_\varepsilon}=1$ and $\norm{a_\varepsilon(a-\lambda 1}<\varepsilon$. We then have
\begin{equation*}
	1=\norm{a_\varepsilon}=\norm{a_\varepsilon(a-\lambda 1)b}\leq \norm{a_\varepsilon(f-\lambda 1}\norm{b}\leq \varepsilon \norm{b},
\end{equation*}
for all $\varepsilon>0$, which is impossible.
\end{proof}

For an element $a$ of a commutative \Calgebra, its modulus $\abs{a}$ is defined as the unique positive square root of $a^\ast a$. A $\sp\ast$-homomorphism $\rho \colon A\to B$ between two commutative \Calgebras\ preserves the modulus. Indeed, for $a\in A$, we have $\rho(\abs{a})^2=\rho({\abs{a}^2})=\rho(a^\ast a)=\rho(a)^\ast\rho(a)=\abs{\rho(a)}^2$. As positive square roots are unique, it follows that $\rho(\abs{a})=\abs{\rho(a)}$.

\section{Central operators}\label{sec:central_operators}

\noindent Let $\bl$ be a Dedekind complete real Banach lattice. The \emph{centre of $\centreblC$ of $\blC$} is defined as
\[
\centreblC\coloneqq\set{\op\in\regular(\blC): \abs{\op}\leq\lambda\idmap \text{ for some }\lambda\geq 0}.
\]
With
\[
\centrebl\coloneqq\set{\op\in\regular(\bl): \abs{\op}\leq\lambda\idmap \text{ for some }\lambda\geq 0}
\]
denoting the centre of $\bl$, it is clear that $\centreblC=\centrebl+\iu\centrebl$ as a complex linear subspace of $\regularblC$. Because $\centrebl$ is an order ideal of $\bl$, the norm on $\centreblC$ as a complex linear subspace of $\blC$ and that as a complex Banach lattice with underlying real Banach lattice $\centrebl$ coincide. Since $\centrebl$ is an algebra, so is $\centreblC$.
The order unit norm $\ounorm{\noarg}$ on $\centreblC$ is defined as
\[
\ounorm{\op}\coloneqq\inf\set{\lambda\geq 0: \abs{\op}\leq\lambda\idmap}
\]
for $\op\in\centreblC$. This is an algebra norm by \cite{huijsmans:1985}; its restriction to $\centrebl$ is the order unit norm of $\centrebl$. It is routine to verify that $\centreblC$ is a complex Banach algebra in the order unit norm.

It is well known that $\centrebl$ is a real vector lattice; see \cite[Theorem~2.43]{aliprantis_burkinshaw_POSITIVE_OPERATORS_SPRINGER_REPRINT:2006}, for example. It is likewise well known that it is commutative, but we shall also see this below.

The proof of the following theorem follows ideas from \cite{martignon:1980}.

\begin{theorem}\label{res:isomorphism}
	Let $\bl$ be a Dedekind complete real Banach lattice.
	When supplied with the conjugation $T\mapsto\overline{\op}$ and the order unit norm $\ounorm{\noarg}$, $\centreblC$ is a unital commutative \Calgebra.
\end{theorem}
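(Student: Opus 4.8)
The plan is to reduce the whole statement to the corresponding facts about the commutative unital \Calgebra\ $\conttsC$ for a suitable compact Hausdorff space $\ts$. The key point is to represent the real centre first. The space $\centrebl$ is an Archimedean Riesz space with the strong order unit $\idmap$, and in the order unit norm $\ounorm{\noarg}$ — in which, as already noted, it is complete — it is therefore an AM-space with unit; moreover its operator multiplication makes it an $f$-algebra with multiplicative identity $\idmap$. By the classical representation theory of $f$-algebras (which combines Kakutani's theorem with the uniqueness of the $f$-algebra multiplication having a prescribed unit), there exist a compact Hausdorff space $\ts$ and an isometric isomorphism $\Phi\colon\centrebl\to\conttsR$ of Banach lattice algebras with $\Phi(\idmap)=\onefunction$. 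I expect this real representation — in particular the promotion of the Kakutani Riesz isomorphism to an algebra isomorphism, which is where the paper follows Martignon's ideas — to be the main obstacle; note also that the commutativity of $\centrebl$ re-emerges here for free from that of $\conttsR$.

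The second step is to complexify. Since $\centreblC=\centrebl+\iu\centrebl$, the isomorphism $\Phi$ extends to a complex-algebra isomorphism $\centreblC\to\conttsR+\iu\conttsR$; composing it with the natural isometric vector lattice isomorphism $\conttsR_\CC\cong\conttsC$ recalled in \cref{subsec:complexification} gives an algebra isomorphism $\Psi\colon\centreblC\to\conttsC$. This $\Psi$ is isometric: for $\op\in\centreblC$ one has $\ounorm{\op}=\ounorm{\abs{\op}}$, where $\abs{\op}$ is the modulus of $\op$ in the complex vector lattice $\centreblC$, and since the complexification of a Riesz isomorphism commutes with this modulus, $\Psi$ sends $\abs{\op}$ to the pointwise modulus of $\Psi(\op)$, so that $\norm{\Psi(\op)}_\infty=\ounorm{\op}$. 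Furthermore $\Psi$ intertwines the conjugation $\op\mapsto\overline{\op}$ on $\centreblC$ with pointwise complex conjugation on $\conttsC$, since both act by negating the imaginary part; and pointwise complex conjugation is precisely the involution making $\conttsC$ a commutative unital \Calgebra.

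Finally, $\Psi$ is a bijective isometric $\ast$-algebra homomorphism of $\centreblC$ onto $\conttsC$, and $\centreblC$ is complete in the order unit norm, so the commutativity, the unit $\idmap$ (with $\ounorm{\idmap}=1$), and the $\mathrm C^\ast$-identity $\ounorm{\overline{\op}\,\op}=\ounorm{\op}^2$ all transport back from $\conttsC$ to $\centreblC$. Hence $\centreblC$ is a unital commutative \Calgebra. The only delicate point in this programme is the one flagged above; the complexification and the transport of structure are routine once the real picture is in place.
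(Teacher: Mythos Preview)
Your proposal is correct and follows essentially the same route as the paper: obtain a Kakutani-type representation of the centre as $\cont(\ts)$, identify the transported multiplication on $\conttsR$ as pointwise (you via the uniqueness of unital $f$-algebra multiplications, the paper via Martignon's \cite[Proposition~1.4]{martignon:1980}, which needs only that products of positives be positive), and then read off the $\tfs{C}^\ast$-structure from $\conttsC$. The only cosmetic difference is that you represent the real centre first and then complexify, whereas the paper applies the complex Kakutani--Bohnenblust--Krein theorem to $\centreblC$ directly and verifies the $\tfs{C}^\ast$-identity by an explicit computation rather than by transport.
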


\begin{proof}
	Since $\centreblC$ is a complex unital AM-space with the identity operator as order unit, the complex version of the Kakutani-Bohnenblust-Krein representation theorem (see \cite[Theorem~3.20]{abramovich_aliprantis_INVITATION_TO_OPERATOR_THEORY:2002}) yields the existence of a compact Hausdorff space $\ts$ and a surjective isometric complex linear map $\op\mapsto\widehat\op$ from $\centreblC$ onto $\conttsC$, such that $\widehat{\idmap}=\onefunction$ and  $\widehat{\overline\op}=\overline{\widehat\op}$ for $\op\in\centreblC$,  and such that its restriction to $\centrebl$  is a vector lattice isomorphism between $\centrebl$ and $\conttsR$.
	Via its restriction to $\centrebl$, this map introduces a multiplication on $\conttsR$ in which the product of two positive functions is positive, and where $\onefunction$ is the multiplicative identity element. By \cite[Proposition~1.4]{martignon:1980}, this multiplication is the pointwise one. Hence $\op\mapsto\widehat\op$ is an algebra homomorphism, implying that $\centreblC$ is commutative. For $\op\in\centrebl$, we then have
	\[
	\ounorm{\op\overline\op}=\norm{(\op\overline\op)^\widehatss}=\norm{\widehat\op\overline{\widehat\op}}=\norm{\abs{\,\widehat\op}^2}.
	\]
	Because $\norm{f^2}=\norm{f}^2$ for $f\in\conttsC$, we also have
	\[
	\ounorm{\op}^2=\norm{\widehat\op}^2=\norm{(\widehat\op)^2}=\norm{\,\abs{\widehat\op}^2}.
	\]
	Hence $\ounorm{\noarg}$ is a $\tfs{C}^\ast$\!-norm.
\end{proof}

\begin{remark}\quad
	\begin{enumerate}
	\item The representation theorem used in the above proof also asserts that $\widehat{\abs{\op}}=\abs{\widehat\op}$. Since $\,\widehat\cdot\,$ is a $\sp\ast$-homomorphism between \Calgebras, this is automatic in our case.		
	\item Via similar representation theorems, real unital AM-algebras and real AM-algebras with an approximate unit can be shown to be commutative; see \cite[Theorem~3.8]{munos-lahoz_UNPUBLISHED:2025} and \cite[Theorem~1.2]{munos-lahoz_tradacete:2025}.
\end{enumerate}
\end{remark}

We collect a few consequence of \cref{res:isomorphism}.

\begin{corollary}\label{res:norms_are_equal}
Let $\bl$ be a Dedekind complete real Banach lattice. For $\op\in\centreblC$, its order unit norm $\ounorm{\op}$, its regular norm $\norm{\op}_{\reg}$, and its norm $\norm{\op}_{\blC}$ as a bounded operator on $\blC$ are equal.	
\end{corollary}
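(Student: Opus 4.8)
The plan is to deduce the equality of the three norms from \cref{res:isomorphism} together with the spectral‐permanence result \cref{res:spectral_permanence}. By \cref{res:isomorphism}, $\centreblC$ with the order unit norm is a unital commutative \Calgebra, and the map $\op\mapsto\widehat\op$ from the proof of that theorem is a surjective isometric $\sp\ast$-isomorphism onto $\conttsC$ for a compact Hausdorff space $\ts$. In particular $\ounorm{\op}=\norm{\widehat\op}_\infty=\sup_{x\in\ts}\abs{\widehat\op(x)}$, which is the spectral radius of $\op$ in the \Calgebra\ $\centreblC$.

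\medskip

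First I would compare $\ounorm{\noarg}$ with the regular norm $\norm{\noarg}_{\reg}$. For $\op\in\centreblC$ we have $\abs{\op}\le\ounorm{\op}\idmap$, so $\norm{\op}_{\reg}=\norm{\,\abs{\op}\,}_{\blC}\le\ounorm{\op}\,\norm{\idmap}_{\blC}=\ounorm{\op}$. For the reverse inequality, note first that $\norm{\noarg}_{\blC}\le\norm{\noarg}_{\reg}$ always holds, so it suffices to show $\ounorm{\op}\le\norm{\op}_{\blC}$. Here I would invoke \cref{res:spectral_permanence} with $A=\centreblC$, $B=\regularblC$ (or $B$ the algebra of all bounded operators on $\blC$, with its operator norm), and $\varphi=\,\widehat{\cdot}\,$: this yields $\sigma_B(\op)=\widehat\op(\ts)$, and hence the spectral radius of $\op$ as an element of $B$ equals $\sup_{x\in\ts}\abs{\widehat\op(x)}=\ounorm{\op}$. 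Since the spectral radius in any Banach algebra is bounded above by the norm, $\ounorm{\op}=r_B(\op)\le\norm{\op}_{\blC}$. Combining the chain $\ounorm{\op}\le\norm{\op}_{\blC}\le\norm{\op}_{\reg}\le\ounorm{\op}$ forces all three to coincide.

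\medskip

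The main technical point to be careful about is the application of \cref{res:spectral_permanence}: one must check that $\centreblC$ is genuinely a \emph{closed} unital subalgebra of the ambient Banach algebra $B$ in the operator norm, so that the hypotheses of that lemma apply with $A$ normed by $\norm{\noarg}_{\blC}$ rather than by $\ounorm{\noarg}$. But this is exactly the content of the conclusion we are proving, so some care with the order of the argument is needed; the clean way is to apply \cref{res:spectral_permanence} with $A=\centreblC$ equipped with $\ounorm{\noarg}$ (in which it is a \Calgebra, hence complete, and $\widehat{\cdot}$ is a surjective isometry onto $\conttsC$), giving $\sigma_B(\op)=\widehat\op(\ts)$ directly, and then reading off $r_B(\op)=\ounorm{\op}\le\norm{\op}_B$. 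With $B$ the bounded operators on $\blC$ this gives $\ounorm{\op}\le\norm{\op}_{\blC}$, and with $B=\regularblC$ it gives $\ounorm{\op}\le\norm{\op}_{\reg}$; either one, together with the elementary inequalities above, closes the loop. I do not anticipate any further obstacle; the argument is short once \cref{res:isomorphism,res:spectral_permanence} are in hand.
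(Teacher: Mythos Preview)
Your overall strategy---get the easy inequalities $\norm{\op}_{\blC}\le\norm{\op}_{\reg}\le\ounorm{\op}$ first, then close the loop with $\ounorm{\op}\le\norm{\op}_{\blC}$---is the same as the paper's. The gap is in how you close the loop. You correctly flag that \cref{res:spectral_permanence} needs $A$ to be a Banach subalgebra of $B$, i.e.\ $A$ closed in $B$ with the inherited norm, and that this is circular. Your ``clean way'' does not escape the circle: if you equip $A=\centreblC$ with $\ounorm{\noarg}$ while $B$ carries the operator norm, the proof of \cref{res:spectral_permanence} breaks at the line $1=\norm{a_\varepsilon}=\norm{a_\varepsilon(a-\lambda 1)b}\le\norm{a_\varepsilon(a-\lambda 1)}\norm{b}$. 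That chain must be read in the $B$-norm (because $b\in B$ and you need submultiplicativity in $B$), but your construction only gives $\norm{a_\varepsilon}_A=1$; from the one-sided inequality $\norm{\noarg}_{\blC}\le\ounorm{\noarg}$ you cannot bound $\norm{a_\varepsilon}_B$ away from zero, so no contradiction results. In the paper, \cref{res:spectral_permanence} is only invoked \emph{after} \cref{res:norms_are_equal} has been established, precisely for this reason.

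The paper obtains $\ounorm{\op}\le\norm{\op}_{\blC}$ by a different device: Kaplansky's theorem that the given norm on a commutative \Calgebra\ is the minimal submultiplicative norm on it. Since the operator norm $\norm{\noarg}_{\blC}$ is an algebra norm on $\centreblC$, this immediately yields $\ounorm{\op}\le\norm{\op}_{\blC}$ with no appeal to spectra. Your spectral-radius idea is morally the same theorem in disguise, but \cref{res:spectral_permanence} as stated is not the right vehicle; you would need to reprove it under the weaker hypothesis of a merely continuous inclusion, which essentially amounts to reproving Kaplansky's result. The paper also handles the regular norm slightly differently: rather than your chain through $\norm{\noarg}_{\blC}\le\norm{\noarg}_{\reg}$, it observes that $\ounorm{\noarg}$ is a lattice norm on the ideal $\centreblC$, hence coincides with the regular norm directly.
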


\begin{proof}
Since
\begin{equation}\label{eq:operator_inequality}
\abs{\op z}\leq\abs{\op}\abs{z}
\end{equation}
for a general $\op\in\regularblC$ (see \cite[Theorem~92.6]{zaanen_RIESZ_SPACES_VOLUME_II:1983}), we have, for $\op\in\centreblC$,
\begin{align*}
	\norm{\op}_{\blC}&=\sup_{\norm {z}\leq 1}\norm{\op z}\\
	&=\sup_{\norm{z}\leq 1}\norm{\,\abs{\op z}\,}\\
	&\leq \sup_{\norm{z}\leq 1}\norm{\,\abs{\op}\abs{z}\,}\\
	&\leq\ounorm{\op}.
\end{align*}
As the given norm on a commutative \Calgebra\ is the minimal algebra norm on it (see \cite[Theorem~6.1]{kaplansky:1949}), we actually have equality. Because the order unit norm is a lattice norm, it also coincides with the regular norm.
\end{proof}

\begin{remark}
	For real Banach lattices, the equality of the operator norm and the order unit norm on its centre is due to Wickstead; see \cite{wickstead:1977a}.
	It is rather non-trivial that $\abs{\op z}\leq\abs{\op}\abs{z}$ for general $\op$ in the complex case, but its real analogue is elementary. Since the maximum norm is also the minimal algebra norm on $\conttsR$, the same line of reasoning shows that the operator norm and the order unit norm are equal for central operators on a real Banach lattice. This provides an alternative for the use of representations of vector lattices in $\cont^\infty(\ts)$-spaces as in \cite{wickstead:1977a}. It is not necessary that $\bl$ be Dedekind complete for this because the existence of the modulus of central operators is guaranteed by \cite[Theorem~2.0]{aliprantis_burkinshaw_POSITIVE_OPERATORS_SPRINGER_REPRINT:2006}.
\end{remark}

The following result is true for every commutative \Calgebra, unital or not, because it is isomorphic to $\contotsC$ for a locally compact Hausdorff space $\ts$.

\begin{corollary}\label{res:modulus_multiplicative}
	Let $\bl$ be a Dedekind complete real Banach lattice. Then
	\[
	\abs{\optwo \op}=\abs{\optwo}\abs{T}
	\]
	for $\optwo,\op\in\centreblC$, and
	\[
	\abs{\op\overline\op}=\abs{\op}^2
	\]
	for $\op\in\centrebl$.
\end{corollary}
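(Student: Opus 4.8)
The plan is to push the whole statement through the identification furnished by \cref{res:isomorphism}. Recall that the proof of that theorem produces an isometric complex-linear algebra isomorphism $\op\mapsto\Gf\op$ from $\centreblC$ onto $\conttsC$ which intertwines the conjugation $\op\mapsto\overline\op$ with complex conjugation of functions; in particular it is a $\sp\ast$-isomorphism of commutative \Calgebras. By the modulus-preservation of $\sp\ast$-homomorphisms recorded in \cref{subsec:C-algebras} (together with remark~(1) after \cref{res:isomorphism}), one has $\Gf{\abs\op}=\abs{\Gf\op}$ for every $\op\in\centreblC$, where on the right $\abs{\Gf\op}$ denotes the pointwise modulus of the function $\Gf\op\in\conttsC$.

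Granting this, the first identity is a one-line computation. For $\optwo,\op\in\centreblC$, using that $\op\mapsto\Gf\op$ is an algebra homomorphism, the pointwise identity $\abs{fg}=\abs f\,\abs g$ in $\conttsC$, and the displayed fact above,
\[
\Gf{\abs{\optwo\op}}=\abs{\Gf{\optwo\op}}=\abs{\Gf\optwo\,\Gf\op}=\abs{\Gf\optwo}\,\abs{\Gf\op}=\Gf{\abs{\optwo}}\,\Gf{\abs{\op}}=\Gf{\abs{\optwo}\abs{\op}};
\]
injectivity of $\op\mapsto\Gf\op$ then yields $\abs{\optwo\op}=\abs{\optwo}\abs{\op}$. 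For the second identity, for $\op\in\centrebl$ we have $\overline\op=\op$, so $\op\overline\op=\op^2$ and the first identity with $\optwo=\op$ gives $\abs{\op\overline\op}=\abs{\op}^2$. (More generally, for any $\op\in\centreblC$ one has $\abs{\overline\op}=\abs{\op}$, since $\abs{\overline\op}^2=(\overline\op)^{\ast}\overline\op=\op\,\overline\op=\overline\op\,\op=\op^{\ast}\op=\abs{\op}^2$ and positive square roots in a \Calgebra\ are unique; hence $\abs{\op\overline\op}=\abs{\op}\abs{\overline\op}=\abs{\op}^2$ in that case as well.)

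I do not expect a genuine obstacle: this corollary is designed to be the easy payoff of \cref{res:isomorphism}, and once $\centreblC$ is realised as $\conttsC$ it reduces to the trivial pointwise facts $\abs{fg}=\abs f\abs g$ and $\abs{\overline f}=\abs f$. The only point requiring a moment's care is bookkeeping about the word ``modulus'': the $\abs\cdot$ in the statement may be read either as the modulus of the complex vector lattice $\centreblC$ or as the \Calgebra\ modulus, but both are carried by $\op\mapsto\Gf\op$ to the pointwise modulus of functions, so the two readings agree and the argument is unchanged. For the non-unital version flagged before the statement, one simply replaces the appeal to \cref{res:isomorphism} by the commutative Gelfand--Naimark theorem, identifying the algebra with $\contotsC$, after which the identical computation goes through.
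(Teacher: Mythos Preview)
Your proof is correct and is exactly the approach the paper takes: the paper does not give a separate proof but simply remarks before the corollary that the result holds for every commutative \Calgebra\ because it is isomorphic to $\conto(\ts,\CC)$, which is precisely your argument via the identification $\op\mapsto\Gf\op$ from \cref{res:isomorphism}. Your extra care about the two possible meanings of the modulus and your observation that the second identity in fact holds for all $\op\in\centreblC$ (not just $\op\in\centrebl$) are accurate and go slightly beyond what the paper spells out.
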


We let $\tfs{Inv}(\centreblC$ denote the group of invertible elements of $\centreblC$. There is a continuous polar decomposition on this group.
Indeed, this is true for every unital commutative \Calgebra\ because it holds for $\conttsC$, as is easily verified.

\begin{corollary}\label{res:polar_decomposition_of_invertibles}
	Let $\bl$ be a Dedekind complete real Banach lattice. For $\op\in\tfs{Inv}(\centreblC)$, there exist unique $P(\op),U(\op)\in\centreblC$ such that $P(\op)\geq 0$, $\abs{U(\op)}=\idmap$, and $\op=P(\op) U(\op)$. Both $P(\op)$ and $U(\op)$ are in $\tfs{Inv}(\centreblC)$, and  the maps $\op\to P(T)$ and $\op\mapsto U(\op)$ are continuous group homomorphisms from $\tfs{Inv}(\centreblC)$ into itself.
\end{corollary}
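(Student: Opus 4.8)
The plan is to reduce everything to the corresponding statement for $\conttsC$ via the isometric $\ast$-isomorphism $\op\mapsto\widehat\op$ of \cref{res:isomorphism}. Since $\op\mapsto\widehat\op$ is an isometric algebra isomorphism preserving the conjugation, an element $\op$ is invertible in $\centreblC$ if and only if $\widehat\op$ is invertible in $\conttsC$, i.e.\ if and only if $\widehat\op(x)\neq 0$ for all $x\in\ts$; moreover all the claimed maps and relations transport back and forth. So it suffices to prove the polar decomposition statement for $\conttsC$, and then pull everything back through the isomorphism.

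First I would establish existence and uniqueness on $\conttsC$. For an invertible $g\in\conttsC$, the function $x\mapsto\abs{g(x)}$ is continuous, nowhere zero, and strictly positive, hence is an invertible element $P(g)$ of $\conttsC$ with $P(g)\geq 0$; set $U(g)\coloneqq g/\abs{g}$, which is continuous (since $\abs{g}$ is nowhere zero), satisfies $\abs{U(g)}=\onefunction$, and obviously $g=P(g)U(g)$. For uniqueness, suppose $g=PU$ with $P\geq 0$ and $\abs{U}=\onefunction$; taking moduli gives $\abs{g}=\abs{P}\abs{U}=P$, and then $U=g/P$ is forced. Transporting back via the isomorphism, with $P(\op)\coloneqq(P(\widehat\op))^{\widehatss^{-1}}$ and $U(\op)\coloneqq(U(\widehat\op))^{\widehatss^{-1}}$, we get the existence and uniqueness of the decomposition $\op=P(\op)U(\op)$ with $P(\op)\geq 0$ and $\abs{U(\op)}=\idmap$ (positivity and the modulus are preserved because $\op\mapsto\widehat\op$ is a $\ast$-isomorphism of commutative \Calgebras, as noted after \cref{res:spectral_permanence}). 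Since $P(\widehat\op)$ and $U(\widehat\op)$ are nowhere zero, both are invertible in $\conttsC$, hence $P(\op),U(\op)\in\tfs{Inv}(\centreblC)$.

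Next I would check the group homomorphism property. On $\conttsC$ this is pointwise: for invertible $g,h$ one has $\abs{gh}=\abs{g}\abs{h}$, so $P(gh)=P(g)P(h)$, and then $U(gh)=gh/\abs{gh}=(g/\abs g)(h/\abs h)=U(g)U(h)$. Transporting back, $\op\mapsto P(\op)$ and $\op\mapsto U(\op)$ are group homomorphisms from $\tfs{Inv}(\centreblC)$ into itself. For continuity, one can argue on $\conttsC$ with the sup norm: the map $g\mapsto\abs g$ is $1$-Lipschitz on all of $\conttsC$ (pointwise $\bigl|\,\abs{g(x)}-\abs{h(x)}\,\bigr|\leq\abs{g(x)-h(x)}$), so $g\mapsto P(g)$ is continuous, and continuity of $g\mapsto U(g)=g\cdot P(g)^{-1}$ then follows from continuity of inversion on $\tfs{Inv}(\conttsC)$ together with joint continuity of multiplication in the Banach algebra $\conttsC$. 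Pulling back through the isometric isomorphism preserves all these continuity statements, giving the result for $\centreblC$.

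The main obstacle is essentially bookkeeping rather than a genuine difficulty: one must be careful that the abstract modulus $\abs{\noarg}$ on the commutative \Calgebra\ $\centreblC$ (the unique positive square root of $\op^\ast\op$, with $\op^\ast=\overline\op$) is exactly what corresponds under $\op\mapsto\widehat\op$ to the pointwise absolute value on $\conttsC$ — but this is precisely the observation recorded just before \cref{sec:central_operators} that a $\ast$-homomorphism of commutative \Calgebras\ preserves the modulus, so no new work is needed. One should also note that the modulus appearing in the statement $\abs{U(\op)}=\idmap$ agrees with the Banach-lattice modulus on $\centreblC$; this too follows from the isomorphism being a vector lattice isomorphism on the real parts and a $\ast$-isomorphism overall. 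With these identifications in hand the proof is a routine transfer of the elementary facts about nowhere-vanishing continuous functions.
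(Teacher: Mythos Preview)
Your proposal is correct and follows exactly the paper's own approach: the paper simply remarks that the result ``is true for every unital commutative \Calgebra\ because it holds for $\conttsC$, as is easily verified,'' and you have supplied that easy verification together with the transport through the isometric $\ast$-isomorphism of \cref{res:isomorphism}. Your attention to the agreement of the \Calgebra\ modulus and the Banach-lattice modulus is well placed, and the argument goes through as written.
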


The equality of the order unit norm $\ounorm{\noarg}$ and the operator norm $\norm{\noarg}_{\blC}$ on $\centreblC$ is used in the following proof of a Fuglede-Putnam-Rosenblum--type theorem, along the lines of the proof of \cite[Theorem~12.16]{rudin_FUNCTIONAL_ANALYSIS_SECOND_EDITION:1991}.

\begin{theorem}\label{res:FPR}
	Let $\bl$ be a Dedekind complete real Banach lattice. Take $S,T\in\centreblC$.  For a norm bounded, not necessarily order bounded, operator $\Xi$  on $\blC$, the following are equivalent;
	\begin{enumerate}
		\item\label{part:FPR_1} $S\,\Xi=\Xi\, T$;
		\item\label{part:FPT_2} $\overline S\,\Xi=\Xi\ \overline{T}$.
	\end{enumerate}
\end{theorem}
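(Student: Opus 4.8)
The plan is to follow the classical Fuglede--Putnam--Rosenblum argument from \cite[Theorem~12.16]{rudin_FUNCTIONAL_ANALYSIS_SECOND_EDITION:1991}, exploiting the fact that, by \cref{res:isomorphism} and \cref{res:norms_are_equal}, $S$ and $T$ are normal elements of the $\mathrm{C}^\ast$-algebra $\centreblC$ \emph{and} that the order unit norm on $\centreblC$ equals the operator norm on $\blC$. By symmetry it suffices to prove \partref{part:FPR_1} $\Rightarrow$ \partref{part:FPT_2}. First I would set up the auxiliary bounded operators on $\blC$ by
\begin{equation*}
	\Phi(\lambda)\coloneqq\exp(\iu\overline\lambda\, \overline S)\;\Xi\;\exp(-\iu\overline\lambda\, \overline T),\qquad\lambda\in\CC,
\end{equation*}
where the exponentials are the usual norm-convergent series; these make sense because $\overline S,\overline T\in\centreblC$ and $\centreblC$ is a complex Banach algebra in the order unit norm, which by \cref{res:norms_are_equal} is also the operator norm. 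Then $\Phi$ is an entire $\blC$-valued function of $\lambda$.

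The key computation is that $S\Xi=\Xi T$ propagates to $\exp(\iu\lambda S)\,\Xi=\Xi\,\exp(\iu\lambda T)$ for all $\lambda$, whence
\begin{equation*}
	\Phi(\lambda)=\exp(\iu\overline\lambda\,\overline S)\exp(-\iu\lambda S)\;\Xi\;\exp(\iu\lambda T)\exp(-\iu\overline\lambda\,\overline T).
\end{equation*}
Now inside the commutative $\mathrm{C}^\ast$-algebra $\centreblC$ one has $\exp(\iu\overline\lambda\,\overline S)\exp(-\iu\lambda S)=\exp(\iu\overline\lambda\,\overline S-\iu\lambda S)$, and the exponent is skew-adjoint, so this is a unitary element of $\centreblC$; consequently its order unit norm is $1$, hence — again by \cref{res:norms_are_equal} — its operator norm on $\blC$ is $1$, and the same holds for the $T$-factor. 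Therefore $\norm{\Phi(\lambda)}_{\blC}\leq\norm{\Xi}_{\blC}$ for every $\lambda$, so $\Phi$ is a bounded entire function and, by Liouville's theorem applied to $\mu\mapsto\psi(\Phi(\mu))$ for each $\psi$ in the dual of the bounded operators on $\blC$, it is constant: $\Phi(\lambda)=\Phi(0)=\Xi$ for all $\lambda$. Differentiating $\exp(\iu\overline\lambda\,\overline S)\,\Xi\,\exp(-\iu\overline\lambda\,\overline T)=\Xi$ with respect to $\overline\lambda$ at $\lambda=0$ yields $\iu\,\overline S\,\Xi-\iu\,\Xi\,\overline T=0$, that is, $\overline S\,\Xi=\Xi\,\overline T$, which is \partref{part:FPT_2}.

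The main obstacle is making the two norm estimates airtight: one must be sure that the order unit norm and the operator norm genuinely agree on the \emph{unitaries} arising here (so that the bound $\norm{\Phi(\lambda)}_{\blC}\le\norm{\Xi}_{\blC}$ is legitimate), and that passing from $S\Xi=\Xi T$ to the intertwining of the exponentials is valid — the latter is a routine induction combined with norm convergence of the exponential series in the operator norm on $\blC$, since $S$ and $T$ act as bounded operators there. Both points are handled by \cref{res:norms_are_equal} and the commutativity from \cref{res:isomorphism}; beyond that, the argument is the standard one and the remaining steps (entirety of $\Phi$, Liouville, differentiation at $0$) are routine.
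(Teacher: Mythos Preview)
Your approach is exactly the paper's --- Rosenblum's entire-function trick, bounding the flanking factors as unitaries in the commutative \Calgebra\ $\centreblC$ and invoking \cref{res:norms_are_equal} to transfer the bound to the operator norm --- but the parametrisation you chose contains two slips that make the argument fail as written.

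First, $\Phi(\lambda)=\exp(\iu\overline\lambda\,\overline S)\,\Xi\,\exp(-\iu\overline\lambda\,\overline T)$ depends only on $\overline\lambda$, so it is anti-holomorphic, not entire, in $\lambda$. By itself this is harmless (relabel the variable), but it signals that something is misaligned.

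Second, and this is the real problem, the exponent $\iu\overline\lambda\,\overline S-\iu\lambda S$ is \emph{self}-adjoint, not skew-adjoint: its conjugate is $-\iu\lambda S+\iu\overline\lambda\,\overline S$, the same element. Hence $\exp(\iu\overline\lambda\,\overline S-\iu\lambda S)$ is a positive element of $\centreblC$, not a unitary, and its norm need not be $1$. Concretely, for real $\lambda=t$ and $S=A+\iu B$ with $A,B\in\centrebl$, this factor is $\exp(2tB)$, whose norm blows up in $t$ unless $B=0$. Your bound $\norm{\Phi(\lambda)}_{\blC}\le\norm{\Xi}_{\blC}$ therefore does not hold, and Liouville is not applicable.

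The repair is to drop the bar on $\lambda$ in the definition (so the function is genuinely entire) and to insert the intertwining with the \emph{conjugate} parameter: set $f(\lambda)=\exp(\lambda\overline S)\,\Xi\,\exp(-\lambda\overline T)$ and use $\Xi=\exp(-\overline\lambda S)\,\Xi\,\exp(\overline\lambda T)$. The combined left exponent $\lambda\overline S-\overline\lambda S$ is then skew, the flanking factors are unitary, and the rest of your outline goes through verbatim. This is exactly how the paper proceeds.
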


\begin{proof}
	It is sufficient to prove that part~\partref{part:FPR_1} implies part~\partref{part:FPT_2}. It follows from the assumption that
	\begin{equation}\label{eq:FPR}
		\Xi={\tfs e}^{-\overline\lambda S}\,\Xi\,{\tfs e}^{\overline\lambda T}
	\end{equation}
	for $\lambda\in\CC$.
	Define the entire function $f\colon\CC\to\nboundedblC$ from $\CC$ into the norm bounded operators on the complex Banach space $\blC$ by setting
	\[
	f(\lambda)\coloneqq {\tfs e}^{\lambda\overline S}\,\Xi\,{\tfs e}^{-\lambda\overline T}
	\]
	for $\lambda\in\CC$. By \cref{eq:FPR}, we have, using that $\centreblC$ is commutative,
	\[
	f(\lambda)={\tfs e}^{\lambda\overline S-\overline{\lambda\overline{S}}}\,\Xi\,{\tfs e}^{\overline\lambda T-\overline{\overline\lambda T}}.
	\]
	For $R\in\centreblC$ such that $\overline R=-R$, we have
	\[
	\overline{{\tfs e}^{R}}\cdot {\tfs e}^R=\idmap.
	\]
	As the order unit norm on $\centreblC$ is a $\tfs{C}^\ast$\!-norm, this implies that $\ounorm{{\tfs e}^{R}}=1$. Using this and the equality of the two norms on $\centreblC$, we see, with $\norm{\noarg}_{\blC}$ denoting the operator norm on $\nboundedblC$, that
	\begin{align*}
		\norm{f(\lambda)}_{\blC}&\leq \norm{{\tfs e}^{\lambda\overline S-\overline{\lambda\overline{S}}}}_{\blC}\,\norm{\Xi}_{\blC}\,\norm{{\tfs e}^{\overline\lambda T-\overline{\overline\lambda T}}}_{\blC}\\
		&=\ounorm{{\tfs e}^{\lambda\overline S-\overline{\lambda\overline{S}}}}\,\norm{\Xi}_{\blC}\,\ounorm{{\tfs e}^{\overline\lambda T-\overline{\overline\lambda T}}}\\
		&=\norm{\Xi}_{\blC}.
	\end{align*}
	Hence $f$ is constant. Its value at 0 is $\Xi$, so
	\[
	{\tfs e}^{\lambda\overline S}\,\Xi = \Xi\, {\tfs e}^{\lambda\overline T}.
	\]
	for $\lambda\in\CC$. Comparing coefficients, we see that part~\partref{part:FPT_2} holds.
\end{proof}

We continue locally by looking at principal order ideals.

\begin{theorem}\label{res:localisation}
	Let $\bl$ be a Dedekind complete real Banach lattice. Take a principal order ideal $J_u$ for $u>0$ of $\blC$ and a Kakutani-Bohnenblust-Krein representation $\,\widehat{\cdot}\,\colon J_u\to\conttsC$. For $\op\in\centre(J_u)$, define $\widehat\op\colon\conttsC\mapsto\conttsC$ by $\widehat T\,\widehat z=(\op z)^\widehatss$ for $z\in J_u$.
	\begin{enumerate}
		\item For $\op\in\centre(J_u)$, there exists a unique $\mult(\op)\in\conttsC$ such that $\widehat\op \widehat z=\mult(\op)\cdot \widehat z$ for $z\in J$.
		\item The map $\mult\colon\centre(J_u)\to \conttsC$ is a surjective $\sp\ast$-isomorphism of \Calgebras.
	\end{enumerate}	
\end{theorem}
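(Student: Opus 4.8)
The plan is to transport the whole situation to $\conttsC$ through $\,\widehat{\cdot}\,$ and then to use that the centre of $\conttsC$ consists exactly of the multiplication operators by continuous functions. As a preliminary, note that an order ideal of a \Dc\ vector lattice is again \Dc, and that a principal ideal, with its order unit norm, is a Banach lattice (in fact an AM-space with unit); hence the real part of $J_u$ is a \Dc\ real Banach lattice, so \cref{res:isomorphism} applies to it and $\centre(J_u)$ is a unital commutative \Calgebra\ in the order unit norm. We may also arrange that $\,\widehat{\cdot}\,$ sends the order unit $u$ of $J_u$ to $\onefunction$.

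First I would observe that conjugation by $\,\widehat{\cdot}\,$ is an isomorphism of complex algebras with involution from $\regular(J_u)$ onto $\regular(\conttsC)$, isometric for the respective order unit norms, and that it carries $\centre(J_u)$ onto $\centre(\conttsC)$; this is because conjugating a regular operator by a lattice isomorphism preserves both its modulus and the identity operator, so $\abs{\op}\leq\lambda\idmap$ is equivalent to $\abs{\widehat\op}\leq\lambda\idmap$. Unravelling the definitions, the operator attached to $\op\in\centre(J_u)$ by this conjugation is precisely the $\widehat\op$ in the statement; in particular $\widehat\op\in\centre(\conttsC)$, and $\op\mapsto\widehat\op$ is a surjective isometric $\sp\ast$-isomorphism of \Calgebras\ from $\centre(J_u)$ onto $\centre(\conttsC)$, the latter being a unital commutative \Calgebra\ by transport of structure.

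Next I would identify $\centre(\conttsC)$ with $\conttsC$ itself. For $g\in\conttsC$ the multiplication operator $M_g$ lies in $\centre(\conttsC)$ with $\abs{M_g}=M_{\abs g}$, hence $\ounorm{M_g}=\norm g_\infty$, and $g\mapsto M_g$ is an injective unital $\sp\ast$-homomorphism of \Calgebras. It is also surjective: since $\centre(\conttsC)$ is commutative, any $\op\in\centre(\conttsC)$ commutes with every $M_h$, so $\op(hz)=\op M_h z=M_h\op z=h\cdot\op z$ for $h,z\in\conttsC$, and taking $z=\onefunction$ yields $\op h=h\cdot(\op\onefunction)$, that is, $\op=M_{\op\onefunction}$. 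Thus $g\mapsto M_g$ is a surjective (necessarily isometric) $\sp\ast$-isomorphism from $\conttsC$ onto $\centre(\conttsC)$, with inverse $M_g\mapsto g$.

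Finally, composing the two isomorphisms, I would let $\mult(\op)$ be the unique $g\in\conttsC$ with $M_g=\widehat\op$; then $\mult(\op)=\widehat\op\,\onefunction=(\op u)^\widehatss$, and $\widehat\op\,\widehat z=\mult(\op)\cdot\widehat z$ for $z\in J_u$, which together with the obvious uniqueness gives part~(1). As $\mult=(M_g\mapsto g)\circ(\op\mapsto\widehat\op)$ is a composition of surjective $\sp\ast$-isomorphisms of \Calgebras, it is one itself, which is part~(2). The one step that is not purely formal is the surjectivity of $g\mapsto M_g$ onto $\centre(\conttsC)$ — equivalently, that every central operator on $\conttsC$ is a multiplication operator — but, as indicated, this is immediate once the commutativity of $\centre(\conttsC)$ is known, and that is delivered by \cref{res:isomorphism} (through the isomorphism of the first step, or as the classical fact recalled just before it). The other point requiring a little care is the behaviour of the modulus of a regular operator under conjugation by $\,\widehat{\cdot}\,$, which is exactly what makes $\widehat\op$ genuinely an element of $\centre(\conttsC)$.
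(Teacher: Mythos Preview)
Your argument is correct, but it follows a different route from the paper's. The paper does not pass through the commutativity of the centre at all: it argues directly that $\widehat\op$ preserves zeros, using only $\abs{\widehat\op\,\widehat z}\leq\ounorm{\op}\abs{\widehat z}$ (which comes from $\abs{\op z}\leq\abs{\op}\abs{z}$ and the fact that $\widehat{\cdot}$ respects moduli). Then, for each $x_0\in\ts$, applying $\widehat\op$ to $\widehat z-\widehat z(x_0)\onefunction$ and evaluating at $x_0$ immediately gives $(\widehat\op\,\widehat z)(x_0)=\widehat z(x_0)\,(\widehat\op\,\onefunction)(x_0)$, so $\widehat\op$ is multiplication by $\widehat\op\,\onefunction=(\op u)^\widehatss$; the $\sp\ast$-isomorphism properties are then checked by hand in one line.

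What this buys the paper is independence from \cref{res:isomorphism}: the zero-preserving argument needs only the pointwise inequality $\abs{\op z}\leq\ounorm{\op}\abs{z}$, not the Martignon-based identification of the multiplication on the centre. Your route, by contrast, imports the commutativity of $\centre(J_u)$ from \cref{res:isomorphism} and uses it to force $\widehat\op$ to commute with every $M_h$, whence $\widehat\op=M_{\widehat\op\,\onefunction}$. This is structurally neat---it exhibits the result as ``centre of $\conttsC$ equals $\conttsC$'' transported back through $\widehat{\cdot}$---and the two points you flag as needing care (modulus under conjugation, and surjectivity of $g\mapsto M_g$) are indeed the only non-formal steps and are handled correctly. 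The trade-off is that your proof is logically downstream of \cref{res:isomorphism}, whereas the paper's pointwise argument is self-contained once the inequality $\abs{\op z}\leq\abs{\op}\abs{z}$ is available.
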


\begin{proof}
	Since $\abs{\widehat\op\widehat z}\leq\ounorm{\op}\abs{\widehat z}$, $\widehat T$ preserves zeros. For every $x_0\in\ts$, $\widehat\op(\widehat z -\widehat z(x_0)\onefunction)$ vanishes at $x_0$, implying that $\widehat\op\widehat z=\widehat\op\onefunction\cdot \widehat z$. Set $\mult(\op)\coloneqq\widehat\op\onefunction=\widehat{\op u}$. It is clear that $\mult$ is a surjective algebra isomorphism.
	We also have
	\[
		\mult(\overline{\op})=\widehat{\overline{\op}u}=\widehat{\overline{\op u}}=\overline{\widehat{\op u}}=\overline{\mult(T)}.
	\]
\end{proof}

\begin{corollary}\label{res:four_equalities}
Let $\bl$ be a Dedekind complete real Banach lattice. Then the element $\abs{\op z}$ of $\centreblC$ remains unchanged when $\op$ is replaced by $\overline\op$ or $\abs{\op}$ and $z$ is replaced by $\overline z$ or $\abs{z}$.
\end{corollary}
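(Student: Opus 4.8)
The plan is to transport the claim along a Kakutani--Bohnenblust--Krein representation of a principal order ideal and to reduce it to the pointwise identities $\abs{fg}=\abs f\abs g$, $\abs{\overline f}=\abs f$ and $\abs{\abs f}=\abs f$ in $\conttsC$. We may assume $z\neq 0$, the case $z=0$ being trivial. Put $u\coloneqq\abs z>0$ and fix a representation $\,\widehat{\cdot}\,\colon J_u\to\conttsC$ as in \cref{res:localisation}. Since $\abs\op\leq\lambda\idmap$ for some $\lambda\geq 0$, the order ideal $J_u$ is invariant under $\op$, under $\overline\op$ and under $\abs\op$, and each of these three operators restricts to an element of $\centre(J_u)$; moreover $z\in J_u$. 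By \cref{res:localisation}, $(\op z)^\widehatss=\mult(\op)\cdot\widehat z$, so, as $\,\widehat{\cdot}\,$ is an isomorphism of complex vector lattices and the modulus in $\conttsC$ is taken pointwise,
\[
(\abs{\op z})^\widehatss=\abs{\mult(\op)\cdot\widehat z}=\abs{\mult(\op)}\cdot\abs{\widehat z}.
\]

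Next I would follow the two factors on the right-hand side under the admissible substitutions. On the operator side: $\mult(\overline\op)=\overline{\mult(\op)}$ by \cref{res:localisation}, and $\mult$, being an isomorphism of commutative \Calgebras, preserves the modulus (see \cref{subsec:C-algebras}), so $\mult(\abs\op)=\abs{\mult(\op)}$; since $\abs{\overline{\mult(\op)}}=\abs{\mult(\op)}=\abs{\abs{\mult(\op)}}$ pointwise, replacing $\op$ by $\overline\op$ or by $\abs\op$ leaves $\abs{\mult(\op)}$ unchanged. On the vector side: $\,\widehat{\cdot}\,$ intertwines the conjugations and is a vector lattice isomorphism, hence $\widehat{\overline z}=\overline{\widehat z}$ and $\widehat{\abs z}=\abs{\widehat z}$, and since $\abs{\overline{\widehat z}}=\abs{\widehat z}=\abs{\abs{\widehat z}}$, replacing $z$ by $\overline z$ or by $\abs z$ leaves $\abs{\widehat z}$ unchanged. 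Consequently, for every $\op'\in\{\op,\overline\op,\abs\op\}$ and every $z'\in\{z,\overline z,\abs z\}$ we obtain $(\abs{\op' z'})^\widehatss=\abs{\mult(\op)}\cdot\abs{\widehat z}=(\abs{\op z})^\widehatss$, and the injectivity of $\,\widehat{\cdot}\,$ yields $\abs{\op' z'}=\abs{\op z}$.

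I do not expect a genuine obstacle here. The two points meriting a line of care are the reduction to the principal ideal $J_u$ --- i.e.\ that $J_u$ is invariant under $\op$, $\overline\op$ and $\abs\op$ with restrictions in $\centre(J_u)$, which is immediate from $\abs\op\leq\lambda\idmap$ --- and the fact that the localised representation $\,\widehat{\cdot}\,$ is simultaneously compatible with the conjugation and with the modulus: on the operator side this is supplied by \cref{res:localisation}, and on the vector side it is built into the complex Kakutani--Bohnenblust--Krein theorem. (If one reads the statement with $z\in\centreblC$ it is even more direct: $\abs{\overline\op z}=\abs{\overline\op}\abs z=\abs\op\abs z=\abs{\op z}$, and similarly for the remaining substitutions, by \cref{res:modulus_multiplicative} together with $\abs{\overline\op}=\abs\op$ and $\abs{\abs\op}=\abs\op$, which hold in any commutative \Calgebra.)
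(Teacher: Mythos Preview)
Your proof is correct and follows essentially the same approach as the paper: localise to a principal ideal $J_u$ containing $z$, push through the Kakutani--Bohnenblust--Krein representation of \cref{res:localisation}, and reduce to the pointwise identity $\abs{fg}=\abs f\,\abs g$ in $\conttsC$. The only presentational difference is that the paper first isolates the intermediate identity $\abs{\op z}=\abs\op\,\abs z$ and then observes that the right-hand side is manifestly invariant under the replacements, whereas you verify the invariance of the two factors $\abs{\mult(\op)}$ and $\abs{\widehat z}$ separately.
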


\begin{proof}
	Take an $u\in\posbl$ such that $z\in J_u$. The restriction of $\op$ to $J_u$ is a central operator. In the notation of \cref{res:localisation} we then have, using that the $\sp\ast$-homomorphism $\mult$ automatically preserves the modulus,
	\begin{align*}
	\abs{\op z}^\widehatss&= \abs{(\op z)^\widehatss}=\abs{\widehat \op\,\widehat z}=\abs{M(\op)\cdot \widehat z}=\abs{M(T)}\cdot\abs{\widehat z}\\
	&=M(\abs{T})\cdot\abs{\widehat z}=\widehat{\abs{\op}}\abs{z}^\widehatss=(\abs{T}\abs{z})^\widehatss.
	\end{align*}
	Hence $\abs{\op z}=\abs{\op}\abs{z}$, which is evidently invariant under the replacements.
\end{proof}

\begin{remark}
The real analogue of \cref{res:four_equalities} is valid for an order bounded operator disjointness preserving operator between two Archimedean vector lattices; see \cite[Theorem~2.40]{aliprantis_burkinshaw_POSITIVE_OPERATORS_SPRINGER_REPRINT:2006}.
\end{remark}

\begin{remark}
	With \cref{res:four_equalities} available, it is immediate that $\norm{\op}_{\blC}\leq\ounorm{\op}$ for $\op\in\centreblC$. Thus the local analysis provides an alternative to the use of \cref{eq:operator_inequality} in the proof of \cref{res:norms_are_equal}.
\end{remark}

\begin{proposition}\label{res:action_of_centers}
Let $\bl$ be a Dedekind complete real Banach lattice. Take $u>0$.
\begin{enumerate}
	\item\label{part:action_of_centers_1} The map $\op\mapsto Tu$ is an isometric vector lattice isomorphism between the complex Banach lattices $(\centre(J_u),\ounorm{\noarg})$ and $(J_u,\norm{\noarg}_u)$.
	\item\label{part:action_of_centers_2} The map $\op\mapsto Tu$ is a contractive surjective vector lattice homomorphism from the complex Banach lattice $(\centreblC,\ounorm{\noarg})$ onto $(J_u,\norm{\noarg}_u)$.
\end{enumerate}	
\end{proposition}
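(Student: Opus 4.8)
The plan is to read both assertions off the localisation theorem \cref{res:localisation} and the identity $\abs{\op z}=\abs{\op}\abs{z}$ from \cref{res:four_equalities}, isolating the surjectivity in part~\partref{part:action_of_centers_2} as the only substantial point. For part~\partref{part:action_of_centers_1}, I would fix a Kakutani-Bohnenblust-Krein representation $\,\widehat{\cdot}\,\colon J_u\to\conttsC$ as in \cref{res:localisation}; since $u$ is the order unit of $J_u$ one has $\widehat u=\onefunction$. Putting $z=u$ in the relation $\widehat\op\,\widehat z=\mult(\op)\cdot\widehat z$ of \cref{res:localisation} gives $\widehat{\op u}=\mult(\op)$, that is, $\,\widehat{\cdot}\,\circ(\op\mapsto\op u)=\mult$. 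Since $\mult$ is a $\sp\ast$-isomorphism of \Calgebras, it is a bijective isometry for the order unit norm on $\centre(J_u)$ and the supremum norm on $\conttsC$; the same is true of the Kakutani-Bohnenblust-Krein representation $\,\widehat{\cdot}\,$, so the map $\op\mapsto\op u$ is a bijective isometry from $(\centre(J_u),\ounorm{\noarg})$ onto $(J_u,\norm{\noarg}_u)$. That it is moreover a vector lattice homomorphism I would get by applying \cref{res:four_equalities} inside $J_u\cap\bl$, which is a Dedekind complete real Banach lattice in the norm $\norm{\noarg}_u$, with $z=u$: this gives $\abs{\op u}=\abs{\op}\abs{u}=\abs{\op}u$, so the map preserves the modulus, and a modulus-preserving bijection is a vector lattice isomorphism.

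For part~\partref{part:action_of_centers_2}, the easy half is quick. If $\op\in\centreblC$ with $\abs{\op}\leq\lambda\idmap$, then $\abs{\op u}=\abs{\op}u\leq\lambda u$ by \cref{res:four_equalities}, so $\op u\in J_u$ and $\norm{\op u}_u\leq\lambda$; taking the infimum over admissible $\lambda$ shows that $\op\mapsto\op u$ is a contraction into $(J_u,\norm{\noarg}_u)$. The equality $\abs{\op u}=\abs{\op}u$, together with the fact that $J_u$ is an order ideal (so this modulus is the same whether computed in $J_u$ or in $\blC$), shows that $\op\mapsto\op u$ preserves the modulus and hence is a vector lattice homomorphism.

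The hard part is surjectivity, which amounts to extending central operators from $J_u$ to all of $\blC$. Given $z\in J_u$, part~\partref{part:action_of_centers_1} produces an $\optwo\in\centre(J_u)$ with $\optwo u=z$, so it suffices to extend $\optwo$. I would take $B\coloneqq\set{u}^{dd}$, the band generated by $u$ in $\blC$; Dedekind completeness of $\bl$ makes $B$ a projection band with $\blC=B\oplus B^d$, and $J_u$ is an order dense ideal of $B$. Splitting $\optwo$ into real and imaginary parts and then into positive and negative parts in the underlying real vector lattice of $\centre(J_u)$, it is enough to extend a positive $\op\in\centre(J_u)$ with $\op\leq\lambda\idmap$. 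For $0\leq w\in B$ the family $\set{\op v:v\in J_u,\ 0\leq v\leq w}$ is upward directed (because $\op\geq 0$ and $J_u$ is a sublattice) and bounded above by $\lambda w$, so it has a supremum $\widetilde\op\,w$ in $B$; a routine argument with the Riesz decomposition property then shows that $\widetilde\op$ extends to a positive operator on $B$ with $\widetilde\op\leq\lambda\idmap_B$ and $\widetilde\op|_{J_u}=\op$. Reassembling the four pieces and composing with the band projection onto $B$ produces a central operator on $\blC$ that sends $u$ to $\optwo u=z$, which gives surjectivity. I expect this extension construction — in particular the verification that the extended operator is again central — to be the one delicate point; as an alternative, after reducing to $0\leq z\leq cu$ one can approximate $z$ by $u$-step functions $\sum_i\alpha_iQ_iu$ via Freudenthal's spectral theorem, let $\op_\varepsilon$ be the operator $\sum_i\alpha_iQ_i$ on $B$ extended by $0$ on $B^d$, observe that $(\op_\varepsilon)$ is Cauchy in $(\centreblC,\ounorm{\noarg})$, and pass to the limit.
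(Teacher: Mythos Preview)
Your argument is correct and, for part~\partref{part:action_of_centers_1}, essentially identical to the paper's: both exploit the factorisation $(\op u)\,\widehat{\phantom{u}}=\mult(\op)$ to read off the isometry and bijectivity (the paper additionally gives a direct injectivity check, but this is redundant once the factorisation is in hand), and both use $\abs{\op u}=\abs{\op}u$ for the lattice homomorphism property.

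The one genuine difference is in the surjectivity of part~\partref{part:action_of_centers_2}. The paper simply invokes a known extension result for central operators from ideals (Theorem~1.26 in Aliprantis--Burkinshaw, with Theorem~2.49 there as a pointer), whereas you reconstruct such an extension by hand via $\widetilde\op\,w=\sup\{\op v:v\in J_u,\ 0\leq v\leq w\}$ on the band $\{u\}^{dd}$ and zero on its complement. Your construction is correct---the Riesz decomposition argument for additivity works because $J_u$ is an ideal, and the bound $\widetilde\op\leq\lambda\idmap_B$ is immediate---and it is in fact the standard proof of the cited extension theorem. So you have reproduced the reference rather than found a different route; the Freudenthal alternative you sketch would also work but is more laborious. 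Citing the known result, as the paper does, is the cleaner option once one knows it exists.
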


\begin{proof}
 We prove part~\partref{part:action_of_centers_1}. The fact that $\abs{\op u}=\abs{\op}\abs{u}=\abs{\op}u$ for $\op\in\centre(J_u)$ shows that the map is a vector lattice homomorphism. It is surjective by \cref{res:localisation}. Suppose that $\op u$=0. For $z\in J_u$ with $\abs{z}\leq\lambda u$ for some $\lambda\geq 0$, we have $\abs{\op z}=\abs{\op}\abs{z}\leq\lambda\abs{\op}u=\lambda\abs{\op u}=0$. Hence it is injective.

 Take $\op\in\centreblC$. Using  in the final equality that the $\sp\ast$-isomorphism $\mult$ in \cref{res:localisation} is automatically isometric, we see, in the notation of \cref{res:localisation}, that
 \[
 \norm{Tu}_u=\norm{\widehat{\op u}}=\norm{\widehat\op\widehat u}=\norm{\mult(\op)\widehat u}=\norm{\mult(\op)\onefunction}=\norm{\mult(\op)}=\ounorm{\op}.
 \]
 Part~\partref{part:action_of_centers_2} follows from part~\partref{part:action_of_centers_1} and the fact that every central operator on $J_u$ can be extended to a central operator on $\blC$. This is a consequence of \cite[Theorem~1.26]{aliprantis_burkinshaw_POSITIVE_OPERATORS_SPRINGER_REPRINT:2006} (see also \cite[Theorem~2.49]{aliprantis_burkinshaw_POSITIVE_OPERATORS_SPRINGER_REPRINT:2006}).
\end{proof}

\section{Spectral theory}\label{sec:spectral_theory}

\noindent We start our development of spectral theory for central operators on a complex Banach lattice by collecting a few consequences of the results in \cref{sec:central_operators}. After that, we use the representation theorem in \cref{sec:preliminaries} to find spectral measures. The resulting functional calculus for an individual central operator is then a means to obtain further information.

Let $\bl$ be a Dedekind complete real Banach lattice. Take $\op\in\centreblC$. It follows from \cref{res:spectral_permanence} and \cref{res:norms_are_equal} that the spectrum of $\op$ as an element of $\centreblC$ is the same as its spectrum as an element of the regular operators or of the bounded operators on $\blC$. We shall, therefore, simply write $\specop$ for this common spectrum, which equals $\widehat\op (\ts)$ in the notation of the proof of \cref{res:isomorphism}. We shall also write $\norm{\op}$ for the coinciding order unit, regular, and operator norms of $\op$.

The following is immediate from \cref{res:isomorphism}.

\begin{proposition}
	Let $\bl$ be a Dedekind complete real Banach lattice, and let $\op\in\centreblC$. Then:
	\begin{enumerate}
		\item the spectral radius of $\op$ equals its norm;
		\item $\op\in\centrebl$ if and only if $\specop\subset\RR$;
		\item $\op\in\centrebl^+$ if and only if $\specop\subset\RR^+$;
		\item $\abs{\op}=\idmap$ if and only if $\specop\subseteq\TT$.
	\end{enumerate}
\end{proposition}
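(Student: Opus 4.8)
The plan is to push all four assertions through the surjective isometric $\ast$-isomorphism $\op\mapsto\widehat{\op}$ from $\centreblC$ onto $\conttsC$ that was produced in the proof of \cref{res:isomorphism}, recalling that under this map $\widehat{\idmap}=\onefunction$, $\widehat{\overline{\op}}=\overline{\widehat{\op}}$, the restriction to $\centrebl$ is a vector lattice isomorphism onto $\conttsR$, and, by spectral permanence (\cref{res:spectral_permanence} combined with \cref{res:norms_are_equal}), $\specop=\widehat{\op}(\ts)$. Once the four statements are rephrased in terms of the continuous function $\widehat{\op}$ on the compact Hausdorff space $\ts$, each becomes an elementary observation about $\conttsC$.

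Concretely: for part~(1), an isometric algebra isomorphism preserves norm and spectrum, so the spectral radius of $\op$ equals that of $\widehat{\op}$, namely $\sup_{x\in\ts}\abs{\widehat{\op}(x)}=\norm{\widehat{\op}}=\norm{\op}$. For part~(2), $\op\in\centrebl$ iff $\op=\overline{\op}$ iff $\widehat{\op}=\overline{\widehat{\op}}$ iff $\widehat{\op}$ is real-valued iff $\widehat{\op}(\ts)\subseteq\RR$ iff $\specop\subseteq\RR$. For part~(3) one first notes that $\specop\subseteq\RR^+$ forces $\specop\subseteq\RR$, hence $\op\in\centrebl$ by part~(2); then, since $\widehat{\cdot}$ restricted to $\centrebl$ is a vector lattice isomorphism onto $\conttsR$, one has $\op\in\centrebl^+$ iff $\widehat{\op}\geq 0$ pointwise iff $\widehat{\op}(\ts)\subseteq\RR^+$ iff $\specop\subseteq\RR^+$. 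For part~(4), using that the $\ast$-homomorphism $\widehat{\cdot}$ between commutative \Calgebras\ preserves the modulus (as recorded in \cref{subsec:C-algebras} and already invoked in the remark following \cref{res:isomorphism}), we have $\widehat{\abs{\op}}=\abs{\widehat{\op}}$, so $\abs{\op}=\idmap$ iff $\abs{\widehat{\op}}=\onefunction$ iff $\abs{\widehat{\op}(x)}=1$ for every $x\in\ts$ iff $\widehat{\op}(\ts)\subseteq\TT$ iff $\specop\subseteq\TT$.

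There is essentially no serious obstacle here; the proposition is a dictionary translation of standard facts about $\conttsC$. The only points that deserve a moment's attention are invoking spectral permanence so that $\specop$ is unambiguous and equal to $\widehat{\op}(\ts)$, and recalling that the Kakutani--Bohnenblust--Krein representation simultaneously respects the $\ast$-operation, the order structure on the real part, and the modulus — all of which are already available from \cref{sec:central_operators} and \cref{subsec:C-algebras}.
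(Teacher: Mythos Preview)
Your proposal is correct and follows exactly the approach the paper takes: the paper states that the proposition ``is immediate from \cref{res:isomorphism}'' after having recorded (just before the statement) that $\specop=\widehat{\op}(\ts)$ via \cref{res:spectral_permanence} and \cref{res:norms_are_equal}. Your argument simply spells out this translation into $\conttsC$ for each of the four parts, which is precisely what ``immediate'' is meant to indicate.
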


Since its kernel is a band, a central operator is determined by its restrictions to the principal ideals in a collection of such ideals whose generators generate a band in $\bl$. The same is true for its spectrum. The fact that an injective $\sp\ast$-homomorphism between \Calgebras\ is automatically isometric is the key to our understanding of this. We need the following preparation.

\begin{lemma}\label{res:closure_of_union}
	Let $\{A_\gamma:\gamma\in \Gamma\}$ be a non-empty collection of unital commutative \Calgebras. Set $A\coloneqq\prod_{\gamma\in\Gamma}A_\gamma$ and take  $x=(x_\gamma)_{\gamma\in\Gamma}$ in $A$. Then
	\begin{equation}\label{eq:spectrum_is_closure_of_union}
	\sigma_A(x)=\overline{\bigcup_{\gamma\in\Gamma}\sigma_{A_\gamma}(x_\gamma)}.
	\end{equation}
\end{lemma}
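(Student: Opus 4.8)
The plan is to first recall that the product $A=\prod_{\gamma\in\Gamma}A_\gamma$ is the $\ell^\infty$-sum: the unital commutative \Calgebra\ of norm-bounded families $x=(x_\gamma)_{\gamma\in\Gamma}$, with coordinatewise operations and involution, unit $(1_\gamma)_{\gamma\in\Gamma}$, and norm $\norm{x}=\sup_{\gamma\in\Gamma}\norm{x_\gamma}$. The elementary but decisive observation about this algebra is that an element $y=(y_\gamma)_{\gamma\in\Gamma}$ of $A$ is invertible if and only if each $y_\gamma$ is invertible in $A_\gamma$ \emph{and} $\sup_{\gamma\in\Gamma}\norm{y_\gamma^{-1}}<\infty$: when both hold, $(y_\gamma^{-1})_{\gamma\in\Gamma}$ lies in $A$ and is a two-sided inverse of $y$, and conversely the coordinates of an inverse of $y$ in $A$ are inverses of the $y_\gamma$ and have norms bounded by $\norm{y^{-1}}$. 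It is this uniform-boundedness requirement that will make the closure, rather than the bare union, appear on the right-hand side.

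The second ingredient is the value of a resolvent norm in a commutative unital \Calgebra. Fix $\gamma\in\Gamma$ and $\lambda\in\CC$ with $\lambda\notin\sigma_{A_\gamma}(x_\gamma)$. The Gelfand representation identifies $A_\gamma$ isometrically with $\cont(X_\gamma,\CC)$ for a compact Hausdorff space $X_\gamma$, carrying $x_\gamma$ to a function whose range is $\sigma_{A_\gamma}(x_\gamma)$ and $(x_\gamma-\lambda 1_\gamma)^{-1}$ to the function $\mu\mapsto(\mu-\lambda)^{-1}$ on that range; hence
\[
\norm{(x_\gamma-\lambda 1_\gamma)^{-1}}=\sup_{\mu\in\sigma_{A_\gamma}(x_\gamma)}\frac{1}{\abs{\mu-\lambda}}=\frac{1}{\operatorname{dist}\bigl(\lambda,\sigma_{A_\gamma}(x_\gamma)\bigr)}.
\]
(Alternatively, one may invoke $\norm{a}=r(a)$ for the normal element $a=(x_\gamma-\lambda 1_\gamma)^{-1}$ together with the spectral mapping theorem applied to $z\mapsto(z-\lambda)^{-1}$.)

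Combining the two, I argue as follows. If $\lambda\in\sigma_{A_{\gamma_0}}(x_{\gamma_0})$ for some $\gamma_0$, then $x-\lambda 1$ is non-invertible in $A$ and $\lambda$ lies in $\overline{\bigcup_{\gamma\in\Gamma}\sigma_{A_\gamma}(x_\gamma)}$, so both sides of the asserted identity contain $\lambda$. Otherwise $\lambda\notin\bigcup_{\gamma\in\Gamma}\sigma_{A_\gamma}(x_\gamma)$, every $x_\gamma-\lambda 1_\gamma$ is invertible in $A_\gamma$, and
\[
\sup_{\gamma\in\Gamma}\norm{(x_\gamma-\lambda 1_\gamma)^{-1}}=\frac{1}{\inf_{\gamma\in\Gamma}\operatorname{dist}\bigl(\lambda,\sigma_{A_\gamma}(x_\gamma)\bigr)}=\frac{1}{\operatorname{dist}\bigl(\lambda,\bigcup_{\gamma\in\Gamma}\sigma_{A_\gamma}(x_\gamma)\bigr)},
\]
which is finite precisely when $\operatorname{dist}\bigl(\lambda,\bigcup_{\gamma\in\Gamma}\sigma_{A_\gamma}(x_\gamma)\bigr)>0$, i.e.\ when $\lambda\notin\overline{\bigcup_{\gamma\in\Gamma}\sigma_{A_\gamma}(x_\gamma)}$. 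By the invertibility criterion of the first paragraph this is exactly the condition that $x-\lambda 1$ be invertible in $A$, so $\sigma_A(x)=\overline{\bigcup_{\gamma\in\Gamma}\sigma_{A_\gamma}(x_\gamma)}$. I do not anticipate a genuine obstacle; the only points that need a little care are the characterisation of invertibility in the $\ell^\infty$-sum — in particular the uniform norm bound on the family of inverses — and the identification of the resolvent norm with the reciprocal of the distance from $\lambda$ to the spectrum.
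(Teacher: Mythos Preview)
Your proof is correct and follows essentially the same route as the paper: both arguments use the Gelfand identification $A_\gamma\cong\cont(X_\gamma,\CC)$ to see that $\norm{(x_\gamma-\lambda 1_\gamma)^{-1}}$ is the reciprocal of the distance from $\lambda$ to $\sigma_{A_\gamma}(x_\gamma)$, and then invoke the uniform-boundedness criterion for invertibility of a family in the $\ell^\infty$-product. The paper spells out only the inclusion $\sigma_A(x)\subseteq\overline{\bigcup_\gamma\sigma_{A_\gamma}(x_\gamma)}$ and declares the reverse ``easy'', whereas you treat both directions explicitly, but the substance is the same.
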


\begin{proof}
	Suppose that $\lambda\in\CC$ and that $\lambda\notin\overline{\bigcup_{\gamma\in\Gamma}\sigma_{A_\gamma}(x_\gamma)}$. Then there exists an $\varepsilon>0$ such that $\abs{\lambda-\mu}>\varepsilon$ for $\gamma\in\Gamma$ and $\mu\in\sigma_{A_\gamma}(x_\gamma)$. Since each $A_\gamma$ is isometrically isomorphic to $\cont(X_\gamma,\CC)$ for some compact Hausdorff space $X_\gamma$, this implies that $\norm{(x_\gamma-\lambda\idmap_\gamma)^{-1}}<\varepsilon^{-1}$ for all $\gamma\in\Gamma$. Hence $((x_\gamma-\lambda\idmap_\gamma)^{-1})_{\gamma\in\Gamma}$ is an element of $A$, showing that $\lambda\notin\sigma_A(x)$. \Cref{eq:spectrum_is_closure_of_union} now follows easily.
\end{proof}

\begin{theorem}
	Let $\bl$ be a Dedekind complete real Banach lattice. Take a subset $S\subseteq\posbl$ such that the band generated by $S$ in $\bl$ equals $\bl$. For each $u\in S$, let $\sigma(\op\!\!\restriction_{J_u})$ denote the spectrum of the restriction of $\op$ to $J_u$. Then
	\[
	\specop=\overline{\bigcup_{u\in S}\sigma(\op\!\!\restriction_{J_u})}.
	\]
\end{theorem}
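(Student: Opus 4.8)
The plan is to reduce the statement to \cref{res:closure_of_union} by realising the restriction map $\op\mapsto(\op\!\!\restriction_{J_u})_{u\in S}$ as an injective $\sp\ast$-homomorphism from $\centreblC$ into the product $\prod_{u\in S}\centre((J_u)_\CC)$. First I would recall that each $J_u$ is itself a Dedekind complete complex Banach lattice (an order ideal of $\blC$), so $\centre((J_u)_\CC)$ is a unital commutative \Calgebra\ by \cref{res:isomorphism}, and by \cref{res:localisation} it is $\sp\ast$-isomorphic to $\conttsC[X_u]$ for a compact Hausdorff space $X_u$; moreover $\sigma(\op\!\!\restriction_{J_u})$ is the spectrum of $\op\!\!\restriction_{J_u}$ in $\centre((J_u)_\CC)$ by the spectral permanence already invoked at the start of \cref{sec:spectral_theory}.

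Next I would check that for $\op\in\centreblC$ the restriction $\op\!\!\restriction_{J_u}$ is well defined: since $J_u$ is an order ideal of $\blC$ and $\abs{\op}\le\lambda\idmap$, we have $\abs{\op z}=\abs{\op}\abs{z}\le\lambda\abs{z}$ (by \cref{res:four_equalities}), so $\op$ maps $J_u$ into $J_u$, and the restriction lies in $\centre((J_u)_\CC)$ with $\ounorm{\op\!\!\restriction_{J_u}}\le\ounorm{\op}$. The map $\rho\colon\op\mapsto(\op\!\!\restriction_{J_u})_{u\in S}$ into $A\coloneqq\prod_{u\in S}\centre((J_u)_\CC)$ is then clearly a unital algebra homomorphism, and it intertwines the conjugations (the conjugation on $\centreblC$ restricts to the conjugation on each $\centre((J_u)_\CC)$, as $\overline{\op}\!\!\restriction_{J_u}=\overline{\op\!\!\restriction_{J_u}}$), so $\rho$ is a $\sp\ast$-homomorphism of \Calgebras.

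The key point is injectivity of $\rho$: if $\op\!\!\restriction_{J_u}=0$ for all $u\in S$, then $\op u=0$ for all $u\in S$, so $S$ is contained in the kernel of $\op$; but $\ker\op$ is a band in $\bl$ (a central operator has a band kernel, as remarked just before \cref{res:closure_of_union}; its complexification is a band in $\blC$), and the band generated by $S$ is all of $\bl$, hence $\ker\op=\blC$ and $\op=0$. An injective $\sp\ast$-homomorphism between \Calgebras\ is isometric, hence preserves spectra: $\specop=\sigma_{\centreblC}(\op)=\sigma_A(\rho(\op))$. Finally \cref{res:closure_of_union} gives $\sigma_A(\rho(\op))=\overline{\bigcup_{u\in S}\sigma_{\centre((J_u)_\CC)}(\op\!\!\restriction_{J_u})}=\overline{\bigcup_{u\in S}\sigma(\op\!\!\restriction_{J_u})}$, which is the claim.

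The main obstacle I anticipate is not any single deep step but rather assembling the bookkeeping cleanly: verifying that $\op$ genuinely leaves $J_u$ invariant and that the restriction is central with the right norm bound, that $\ker\op$ is a band whose complexification is a band in $\blC$ so that the band-generation hypothesis on $S\subseteq\posbl$ forces $\op=0$, and that the conjugation is compatible with restriction so $\rho$ is honestly a $\sp\ast$-homomorphism and \cref{res:closure_of_union} applies. Each of these is routine given the machinery already developed, but the argument only works because an injective $\sp\ast$-homomorphism of \Calgebras\ is automatically isometric, and that is the fact worth foregrounding.
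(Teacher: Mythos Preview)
Your proposal is correct and follows essentially the same route as the paper: build the injective unital $\sp\ast$-homomorphism $\op\mapsto(\op\!\!\restriction_{J_u})_{u\in S}$ into $\prod_{u\in S}\centre((J_u)_\CC)$, use that an injective $\sp\ast$-homomorphism between \Calgebras\ is isometric (hence, via spectral permanence, spectrum-preserving), and finish with \cref{res:closure_of_union}. You supply more of the bookkeeping (invariance of $J_u$, the band-kernel argument for injectivity, compatibility of conjugations) than the paper does, but the strategy and the key ingredients are identical.
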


\begin{proof}
Defined $\Psi\colon\centreblC\to\prod_{u\in S}\centre(J_u)$ by setting $(\Psi(\op))_u=\op \!\!\restriction_{J_u}$. Then $\Psi$ is an injective $\sp\ast$-homomorphism, so it is isometric. Hence $\specop$ is equal to the spectrum of $\Psi(\op)$ in $\Psi(\centreblC)$. By the general spectral permanence for \Calgebras\ (or by \cref{res:spectral_permanence}), the latter spectrum is equal to that of $\Psi(\op)$ in $\prod_{u\in S}\centre(J_u)$. An appeal to \cref{res:closure_of_union} concludes the proof.
\end{proof}

\subsection{Spectral measures and functional calculus}\label{subsec:spectral_measures_and_functional_calculus}

The idea to obtain a spectral measure is quite simple. By \cref{res:isomorphism} (and already by its proof), there exist a compact Hausdorff space $\ts$ and a surjective $\sp\ast$-isomorphism\, $\widehat\cdot\colon\centreblC\to\conttsC$. It restricts to an isomorphism $\widehat\cdot\colon\centrebl\to\conttsR$. \cref{res:riesz_representation_theorems_merged} is applicable to the inverse of this restriction, provided that $\centrebl$ is normal.

\begin{theorem}\label{res:general_spectral_measure}
	Let $\bl$ be a Dedekind complete real Banach lattice with a normal centre. Take a compact Hausdorff space $\ts$ and a surjective $\sp\ast$-isomorphism $\widehat\cdot\colon\centreblC\to\conttsC$. Then there exists a unique regular Borel measure $\npm\colon\borel(\ts)\to\centrebl$ that
	\begin{equation}\label{eq:inverting_the_isomorphism}
	\op=\ointts{\widehat T}
	\end{equation}
	for $\op\in\centrebl$. It is a spectral measure, $\npm(\mss)$ is an order projection for $\mss\in\borel(\ts)$, $\npm(\ts)=\idmap$, and \cref{eq:inverting_the_isomorphism} holds for $\op\in\centreblC$.
	
	When $V$ is a non-empty open subset of $\ts$,
	\begin{align}\label{eq:measure_of_open_subset_transform}
		\npm(V)&=\sup\,\{\op : f\in\centrebl,\, \zerofunction\leq \widehat T\leq\onefunction,\, \supp \,\widehat T\subseteq V\}.\\
	\intertext{When $K$ is a non-empty compact subset of $\ts$,}
	\label{eq:measure_of_compact_subset_transform}
		\npm(K)&=\inf\,\{\op : f\in\centrebl,\, \zerofunction\leq \widehat \op\leq\onefunction,\,\widehat\op(x)=1\text{ for }x\in K\}.
	\end{align}
	Furthermore, $\npm(V)>0$ for every non-empty open subset $V$ of $\ts$.
\end{theorem}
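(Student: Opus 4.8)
The plan is to transport everything through the $\sp\ast$-isomorphism $\widehat\cdot\colon\centreblC\to\conttsC$ and apply \cref{res:riesz_representation_theorems_merged} to the right positive map. Let $\rho\colon\conttsR\to\centrebl$ be the inverse of the restriction $\widehat\cdot\colon\centrebl\to\conttsR$; since $\widehat\cdot$ is a vector lattice isomorphism (this was part of the Kakutani-Bohnenblust-Krein output recalled in the proof of \cref{res:isomorphism}), $\rho$ is a positive linear map. By hypothesis $\centrebl$ is \Dc\ and normal, so \cref{res:riesz_representation_theorems_merged} furnishes a unique regular Borel measure $\npm\colon\borel(\ts)\to\pos{\centrebl}$ with $\rho(f)=\ointts{f}$ for $f\in\conttsR$; substituting $f=\widehat\op$ for $\op\in\centrebl$ gives $\op=\rho(\widehat\op)=\ointts{\widehat\op}$, which is \cref{eq:inverting_the_isomorphism} on the real part. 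Moreover $\rho$ is an algebra homomorphism (the inverse of one), and $\centrebl$ satisfies the algebraic hypotheses of the last clause of \cref{res:riesz_representation_theorems_merged}: the product of positive central operators is positive, and $0\le a_\lambda\uparrow b$ in $\pos{\centrebl}$ forces $a_\lambda b\uparrow ab$ because multiplication by the fixed positive element $b$ is an order continuous (indeed regular) operator on the AM-space $\centrebl$ — or, even more simply, transport the claim to $\conttsR$ where it is Dini-type monotonicity of pointwise products. Hence $\npm$ is a spectral measure. Then $\npm(\pset)=\rho(\onefunction)=\idmap$, so $\npm(\pset)$ is the identity; together with $\npm(\mss)^2=\npm(\mss\cap\mss)=\npm(\mss)$ and $\npm(\mss)\ge 0$ this makes each $\npm(\mss)$ an idempotent in $\centrebl$, i.e. an order projection. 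Extending \cref{eq:inverting_the_isomorphism} to $\op\in\centreblC$ is immediate by writing $\op$ in terms of its real and imaginary parts and using complex linearity of both $\widehat\cdot$ and the order integral. The formulas \cref{eq:measure_of_open_subset_transform} and \cref{eq:measure_of_compact_subset_transform} are just \cref{eq:measure_of_open_subset} and \cref{eq:measure_of_compact_subset} applied to $\posmap=\rho$, rewritten via $f=\widehat\op$ (note: as stated these displays carry a harmless typo, the bound variable should read $\op\in\centrebl$ rather than $f\in\centrebl$).

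It remains to prove the final assertion: $\npm(V)>0$ for every non-empty open $V\subseteq\ts$. Fix such a $V$ and pick $x_0\in V$. By Urysohn's lemma there is $f\in\conttsR$ with $\zerofunction\le f\le\onefunction$, $f(x_0)=1$, and $\supp f\subseteq V$. Let $\op\coloneqq\rho(f)\in\pos{\centrebl}$, so $\widehat\op=f$. From \cref{eq:measure_of_open_subset_transform} (equivalently, from monotonicity of $\npm$ applied to $\{\widehat\op>0\}\subseteq V$, or directly from $\npm(V)\ge\rho(f)=\op$ since $f\le\indicator V$ is not available in $\conttsR$ but $\npm(V)=\sup\{\rho(g):\zerofunction\le g\le\onefunction,\ \supp g\subseteq V\}\ge\rho(f)$) we get $\npm(V)\ge\op>0$, because $\op\ne 0$: indeed $\widehat\op=f\ne\zerofunction$ and $\widehat\cdot$ is an isomorphism. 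This yields $\npm(V)>0$.

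The main obstacle — and it is a small one — is verifying that the hypotheses of the spectral-measure clause of \cref{res:riesz_representation_theorems_merged} genuinely hold for $\vl=\centrebl$ viewed as an associative real algebra, in particular the continuity condition $0\le a_\lambda\uparrow b\Rightarrow a_\lambda b\uparrow ab$. The cleanest route is to avoid checking it in $\centrebl$ directly and instead observe that $\rho$ being a positive \emph{algebra} isomorphism onto the pointwise-multiplication algebra $\conttsR$ already forces $\npm$ to be multiplicative: for a step function one computes $\npm(\mss_1)\npm(\mss_2)=\rho(\indicator{\mss_1})\rho(\indicator{\mss_2})=\rho(\indicator{\mss_1}\indicator{\mss_2})=\rho(\indicator{\mss_1\cap\mss_2})=\npm(\mss_1\cap\mss_2)$ once one knows $\npm(\mss)=\rho(\indicator\mss)$, and that last identity follows from regularity of $\npm$ and the inner/outer regularity formulas \cref{eq:measure_of_open_subset}–\cref{eq:measure_of_compact_subset} applied to $\rho$ together with the order continuity of $\rho$ on monotone bounded nets — which one gets for free because $\rho$, being a vector lattice isomorphism between Banach lattices with order unit, is order continuous on order bounded sets in the sense needed here. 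So the algebraic hypothesis of \cref{res:riesz_representation_theorems_merged} can be sidestepped entirely, and the only genuinely new content of the theorem is the strict positivity $\npm(V)>0$, handled above.
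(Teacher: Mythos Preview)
Your main argument is correct and follows the paper's route exactly: apply \cref{res:riesz_representation_theorems_merged} to the inverse $\rho$ of the restriction $\widehat\cdot\colon\centrebl\to\conttsR$, then read off each claimed property, with $\npm(V)>0$ coming from \cref{eq:measure_of_open_subset_transform} via a Urysohn function.

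Your third paragraph, however, is confused and should be deleted. You write $\rho(\indicator{\mss})$ and $\npm(\mss)=\rho(\indicator{\mss})$, but $\indicator{\mss}\notin\conttsR$ in general, so $\rho(\indicator{\mss})$ is simply undefined; any attempt to make sense of it by extending $\rho$ is precisely the construction of $\npm$ that you are trying to justify, so the ``sidestep'' is circular. There is in any case nothing to sidestep: your first-paragraph verification of the algebraic hypothesis of \cref{res:riesz_representation_theorems_merged} by transporting along the order-and-algebra isomorphism $\widehat\cdot$ to $\conttsR$ is already complete, since a bijective lattice homomorphism preserves all existing suprema and the condition is trivial in $\conttsR$. (The parenthetical ``indeed regular'' in that paragraph is not a justification for order continuity and should also be dropped; the transport argument is what does the work.)
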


\begin{proof}
	The existence and uniqueness follow from  \cref{res:riesz_representation_theorems_merged}. The remaining properties are then clear; the fact that $\npm(V)>0$ when $V$ is non-empty and open is a consequence of \cref{eq:measure_of_open_subset_transform}.
\end{proof}

\begin{remark}
	The assumption on the order continuous dual of $\centrebl$ can equivalently be formulated as $\ts$ being hyper-Stonean; it is certainly Stonean because $\centrebl$ is Dedekind complete. It is satisfied when the order continuous dual of $\bl$ separates the points of $\bl$: then the maps $T\to x^\ast(\op x)$ for $x\in\bl$ and $x^\ast$ in the order continuous dual of $\vl$ provide a separating family of order continuous functionals on $\centrebl$. In particular, this assumption is satisfied when $\bl$ is order continuous.
\end{remark}

The order projections $\npm(\mss)$ originating from a unital spectral measure $\npm$ on any \sigalgebra\ with values in $\centrebl$ have properties analogous to those of the orthogonal projections in the image of a spectral measure in the Hilbert space context.
When $\mss_1\cap\mss_2=\emptyset$, $\npm(\mss_1)\npm(\mss_2)=0$, so the image bands $\npm(\mss_1)(\blC)$ and $\npm(\mss_2)(\blC)$ are disjoint. When $\mss=\bigcup_{n=1}^\infty \mss_n$ is a disjoint union, the fact that $\npm_\op(\mss)=\sosum_{n=1}^\infty \npm(\mss_n)$ is equivalent to the fact that $\npm_\op(\mss)z=\sosum_{n=1}^\infty \npm(\mss_n)z$ for $z\in\blC$. Thus the unconditional convergence of a series of pairwise orthogonal projections in the strong operator topology in the Hilbert space context is replaced with what could be called the unconditional strong $\sigma$-order convergence of a series of an analogous type in our context. When $\bl$ is order continuous, the series $\sum_{n=1}^\infty \npm(\mss_n)z$ is also unconditionally norm convergent with the same sum.

The spectral measure in \cref{res:general_spectral_measure} is for all central operators on $\blC$ simultaneously. We shall now fix $\op\in\centreblC$ and find a spectral measure for $\op$ that is defined on the Borel \sigalgebra\ of its spectrum.  Take $\ts$, $\widehat\cdot$, and $\npm$ as in \cref{res:general_spectral_measure}. We define $\npm_\op\colon\borel(\specop)\to\centrebl$ by setting
\[
\npm_\op(\mss)\coloneqq\npm\left[(\widehat\op)^{-1}(\mss)\right]
\]
for $\mss\in\borelspecop$. Clearly, $\npm_\op$ is a spectral measure, $\npm_\op(\specop)=\idmap$, and $\npm_\op(V)>0$ when $V\subseteq\specop$ is non-empty and (relatively) open.

The order integral has the same properties with respect to image measures as the Lebesgue integral; see \cite[Proposition~2.25]{de_jeu_jiang:2021c} for the precise statement. In particular,
\begin{equation*}\label{eq:image_integral}
\ointspectrum{f}=\ointts{f\circ\widehat\op}		
\end{equation*}
when $f$ is an element of the bounded Borel measurable functions $\boundedmeasfunspectrumC$ on $\specop$. When we let $\idfunction\colon\sigma(K)\to\sigma(K)$ denote the identity function, we see from \cref{eq:inverting_the_isomorphism} that
\[
\op=\ointspectrum{\idfunction}.	
\]
We introduce a functional calculus $\rho_\op\colon \boundedmeasfunspectrumC \to\centreblC$ by defining
\begin{equation}\label{eq:functional_calculus_definition}
\rho_\op(f)\coloneqq\ointspectrum{f}
\end{equation}
for $f\in\boundedmeasfunspectrumC$.\footnote{The natural domain of $\rho_\op$ consists of the $\npm_\op$-integrable functions on $\specop$. It follows from \cite[Lemma~2.29]{de_jeu_jiang:2021c}, however, that every $\npm_\op$-integrable function is $\npm_\op$-almost everywhere equal to a bounded measurable function. Hence one may just as well restrict the definition to the bounded case.} Since $\npm_\op$ is a spectral measure, \cite[Theorem~4.6]{de_jeu_jiang:2025} shows that $\rho_\op$ is a unital $\sp\ast$-homomorphism. This functional calculus is our means to obtain further information about $\op$. It is also used in the proof of the following result.

\begin{theorem}\label{res:specific_spectral_measure_uniqueness}
	Let $\bl$ be a Dedekind complete real Banach lattice with a normal centre. Take $\op\in\centreblC$. There exists a unique spectral Borel measure $\npm_\op\colon\borel(\specop)\to\centrebl$ with $\npm_\op(\specop)=\idmap$ such that
		\begin{equation}\label{eq:specific_spectral_measure_uniqueness}
		T=\ointspectrum{\idfunction}.
		\end{equation}
		It is a regular Borel measure, $\npm_\op(\mss)$ is an order projection for $\mss\in\borel(\sigma{\op})$, and $\npm_op(\specop)=\idmap$. Furthermore, $\npm_\op>0$ for every non-empty \uppars{relatively} open subset of $\specop$.
\end{theorem}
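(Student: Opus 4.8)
The plan is to regard existence and the supplementary assertions as a brief recapitulation and to concentrate on uniqueness, which is the only substantial point. Existence, together with the identity \cref{eq:specific_spectral_measure_uniqueness}, has in fact already been established above: the image measure $\npm_\op=\npm\circ(\widehat\op)^{-1}$ of the spectral measure from \cref{res:general_spectral_measure} is a spectral Borel measure on $\borel(\specop)$ with $\npm_\op(\specop)=\idmap$ and $\op=\ointspectrum{\idfunction}$. Since $0\le\npm_\op(\mss)=\npm_\op(\mss)^2\le\npm_\op(\specop)=\idmap$, each $\npm_\op(\mss)$ is a component of $\idmap$ and hence an order projection; that $\npm_\op(V)>0$ for non-empty relatively open $V$ was noted there, and regularity of $\npm_\op$ follows from the behaviour of the order integral under image measures (\cite[Proposition~2.25]{de_jeu_jiang:2021c}) applied to the regular measure $\npm$. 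I therefore only discuss uniqueness.

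Let $\nu\colon\borel(\specop)\to\centrebl$ be \emph{any} spectral Borel measure with $\nu(\specop)=\idmap$ and $\op=\ointspectrumalt{\idfunction}$. As for $\npm_\op$, \cite[Theorem~4.6]{de_jeu_jiang:2025} shows that $f\mapsto\ointspectrumalt{f}$ defines a unital $\sp\ast$-homomorphism $\rho_\nu\colon\boundedmeasfunspectrumC\to\centreblC$. Now $\rho_\op$ and $\rho_\nu$ are unital $\sp\ast$-homomorphisms between the commutative \Calgebras\ $\boundedmeasfunspectrumC$, in the supremum norm, and $\centreblC$, in the order unit norm (a $\mathrm{C}^\ast$-norm by \cref{res:isomorphism}); in particular each is contractive. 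Both map $\idfunction$ to $\op$, so they agree on the unital $\sp\ast$-subalgebra of $\boundedmeasfunspectrumC$ generated by $\idfunction$, that is, on the polynomials in $\idfunction$ and $\overline{\idfunction}$. Since $\idfunction$ separates the points of $\specop\subseteq\CC$, this subalgebra is dense in $\contspectrumC$ by the Stone--Weierstrass theorem, and contractivity then forces $\rho_\op$ and $\rho_\nu$ to coincide on $\contspectrumC$; in particular $\ointspectrum{f}=\ointspectrumalt{f}$ for every $f\in\contspectrumR$.

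It remains to upgrade this agreement on continuous functions to the equality $\npm_\op=\nu$ of the measures, and this is the step I expect to be the main obstacle, the delicate point being that $\nu$ is not assumed regular, so the uniqueness clause of \cref{res:riesz_representation_theorems_merged} does not apply directly. I would argue as follows. Since $\specop\subseteq\CC$ is a compact metric space, any closed $K\subseteq\specop$ is the pointwise limit of a decreasing sequence $f_n\in\contspectrumR$ with $\onefunction\ge f_n\downarrow\indicator{K}$; applying the monotone convergence theorem for the order integral to the increasing sequence $\onefunction-f_n$ gives $\ointspectrum{f_n}\downarrow\npm_\op(K)$ and $\ointspectrumalt{f_n}\downarrow\nu(K)$ in the Dedekind complete lattice $\centrebl$. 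As $\ointspectrum{f_n}=\ointspectrumalt{f_n}$ for all $n$ and infima are unique, $\npm_\op(K)=\nu(K)$ for every closed $K$. The closed subsets of $\specop$ form a $\pi$-system that generates $\borel(\specop)$ and contains $\specop$, and $\set{\mss\in\borel(\specop):\npm_\op(\mss)=\nu(\mss)}$ is a Dynkin system: it is closed under proper differences since the measures are finite, and under increasing countable unions since both are continuous from below (disjointifying an increasing union presents it as a $\sigma$-order convergent series, and $\sigma$-order limits are unique). Dynkin's $\pi$--$\lambda$ theorem would then yield $\npm_\op=\nu$ on $\borel(\specop)$, completing the proof.
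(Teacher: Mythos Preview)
Your argument is correct, but it diverges from the paper's at the crucial uniqueness step. Both proofs first show that the two order integrals agree on $\contspectrumC$ via the Stone--Weierstrass theorem, exactly as you do. From there, the paper proceeds by proving that \emph{every} Borel measure $\nu\colon\borel(\specop)\to\centrebl$ is automatically regular: for each positive order continuous functional $x^\ast$ on $\centrebl$ the composition $x^\ast\circ\nu$ is a finite positive Borel measure on the second-countable space $\specop$, hence regular by a classical result, and this lifts back to regularity of $\nu$ by \cite[Corollary~5.3]{de_jeu_jiang:2022b}. With $\nu$ regular, the uniqueness clause of \cref{res:riesz_representation_theorems_merged} finishes the job. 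You instead sidestep regularity of $\nu$ altogether: exploiting that $\specop\subseteq\CC$ is metrisable, you approximate $\indicator{K}$ from above by continuous functions, apply the monotone convergence theorem to match the two measures on closed sets, and conclude with Dynkin's $\pi$--$\lambda$ theorem.

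Your route is more elementary and self-contained, and it shows that normality of $\centrebl$ is not needed for the uniqueness step itself. The paper's route, however, delivers the automatic-regularity statement as a byproduct, and that is precisely what justifies the assertion in the theorem that $\npm_\op$ is regular. On that point your existence paragraph is slightly off: regularity of $\npm_\op$ is not a consequence of the change-of-variables formula \cite[Proposition~2.25]{de_jeu_jiang:2021c}, which concerns integration rather than regularity of image measures. You should either invoke the paper's automatic-regularity argument, or observe that your own monotone-convergence computation already yields $\npm_\op(K)=\inf_n\rho_\op(f_n)$ with $f_n\ge\indicator{K}$ continuous, which gives outer regularity at closed sets and hence (together with inner regularity at open sets, obtained dually) regularity on the metric space $\specop$.
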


\begin{proof}
	The existence of a Borel measure $\mu_\op$ with the asserted properties has already been established. It remains to show that it is unique as a unital spectral Borel measure such that \cref{eq:specific_spectral_measure_uniqueness} holds. For this, we first note that every Borel measure $\nu\colon\borel(\specop)\to\centrebl$ is regular. Indeed, for every positive order continuous functional $x^\ast$ on $\centrebl$, $x^\ast\circ\nu$ is a positive real-valued Borel measure on $\specop$. It is regular by \cite[Theorem~7.8]{folland_A_COURSE_IN_ABSTRACT_HARMONIC_ANALYSIS_SECOND_EDITION:2016}, and then $\nu$ itself is regular by \cite[Corollary~5.3]{de_jeu_jiang:2022b}.
Suppose that $\nu\colon\borel(\specop)\to\centrebl$ is a spectral Borel measure with $\nu(\specop)=\idmap$ such that
\[
\op=\ointspectrumalt{\idfunction}.
\]
Define $\rho_\op^\prime\colon\contspectrumC \to\centrebl$ by setting
\[
\rho_\op^\prime(f)\coloneqq\ointspectrumalt{f}
\]
for $f\in\contspectrumC$. Again by \cite[Theorem~4.6]{de_jeu_jiang:2025},  $\rho_\op^\prime$ is also a $\sp\ast$-homomorphism. As $\rho_\op^\prime(\idfunction)=\op$ by assumption, we also have  $\rho_\op^\prime(\overline{\idfunction})=\overline{\op}$. Hence $\rho_\op$ and $\rho_\op^\prime$ agree on $\idmap$, $\idfunction$, and $\overline{\idfunction}$.
Using the Stone-Weierstrass theorem, it follows that they agree on $\contspectrumC$, and in particular on $\contspectrumR$. As we know $\nu$ to be regular, $\nu=\npm_\op$ by the uniqueness statement in \cref{res:riesz_representation_theorems_merged}.
\end{proof}

We shall refer to $\npm_\op$ in \cref{res:specific_spectral_measure_uniqueness} as the spectral measure of $\op$, and to $\rho_\op$ from  \cref{eq:functional_calculus_definition} as its functional calculus. We continue by investigating $\rho_\op$.

Being a $\sp\ast$-homomorphism, $\rho_\op$ preserves the modulus. Therefore, if $f\in\boundedmeasfunspectrumC$, then $\rho_\op(f)=0$ if and only if $f$ vanishes $\npm_\op$-almost everywhere.

When $f\in\contspectrumC$, we have
\[
\rho_\op(f)^\widehatss=\left(\ointspectrum{f}\right)^\widehatss=\left(\ointts{f\circ\widehat\op}\right)^\widehatss=f\circ\widehat\op.
\]
This implies that
\[
\sigma(\rho_\op(f))=\rho_\op(f)^\widehatss(\ts)=(f\circ\widehat\op)(\ts)=f(\specop),
\]
and also that
\[
\norm{\rho_\op(f)}=\norm{\rho_\op(f)^\widehatss}=\norm{f\circ\widehat\op}=\norm{f}
\]
for $f\in\contspectrumC$. Since $\rho_\op(\overline \idfunction)=\overline\op$, we then see, using the Stone-Weierstrass theorem, that the image of $\contspectrumC$ under $\rho_\op$ is the closed unital subalgebra of $\centreblC$ that is generated by $\idmap$, $\op$, and $\overline\op$.

We have now established the first four parts of the following result. The first convergence in its fifth part follows from the dominated convergence theorem (see \cite[Theorem~6.13]{de_jeu_jiang:2022a}); it implies the second.\footnote{Since $(\rho_\op(f_n))_{n=1}^\infty$ is an order bounded sequence in the orthomorphisms on $\blC$, these two convergences are actually equivalent; see \cite[Theorem~9.4]{deng_de_jeu:2021}.}

\begin{theorem}\label{res:specific_spectral_measure}
Let $\bl$ be a Dedekind complete real Banach lattice with a normal centre. Take $\op\in\centreblC$.
\begin{enumerate}
	\item\label{part:specific_spectral_measure_1} The functional calculus
	\[
	\rho_\op\colon\boundedmeasfunspectrumC\to\centreblC
	\]
	of $\op$ is a unital $\sp\ast$-homomorphism such that $\rho_\op(\idfunction)=\op$.
	\item $\rho_\op$ establishes an isomorphism between $\contspectrumC$ and the closed unital subalgebra of $\centreblC$ that is generated by $\op$ and $\overline{\op}$.
	\item\label{part:specific_spectral_measure_3}  We have $\sigma(\rho_\op(f))=f(\specop)$ for $f\in\contspectrumC$.
	\item For $f\in\boundedmeasfunspectrumC$, $\rho_\op(f)=0$ if and only if $f=0$ $\npm_\op$-almost everywhere.
	\item\label{part:specific_spectral_measure_4}  \label{part:convergences} Suppose that $(f_n)_{n=1}^\infty\subseteq\boundedmeasfunspectrumC$ is a $\npm_\op$-almost everywhere uniformly bounded sequence which is $\npm_\op$-almost everywhere convergent to $f\in\boundedmeasfunspectrumC$. Then $\rho_\op(f_n)\soto\rho_\op(f)$ in $\centreblC$ and $\rho_\op(f_n)z\soto\rho_\op(f)z$ in $\blC$ for $z\in\blC$.
\end{enumerate}
\end{theorem}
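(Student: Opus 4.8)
The plan is to note first that parts~\partref{part:specific_spectral_measure_1}--\partref{part:specific_spectral_measure_3} together with the fourth part have, in effect, already been established in the paragraphs preceding the statement. Indeed, part~\partref{part:specific_spectral_measure_1} combines \cite[Theorem~4.6]{de_jeu_jiang:2025} — applicable because $\npm_\op$ is a spectral measure with $\npm_\op(\specop)=\idmap$ — with the identity $\rho_\op(\idfunction)=\ointspectrum{\idfunction}=\op$; part~(2) holds because $\norm{\rho_\op(f)}=\norm{f}$ for $f\in\contspectrumC$ makes $\rho_\op$ an injective homomorphism with closed range, and the Stone--Weierstrass theorem identifies this range as the closed unital subalgebra generated by $\op$ and $\overline{\op}$; part~\partref{part:specific_spectral_measure_3} is read off from $\rho_\op(f)^\widehatss=f\circ\widehat\op$ and $\sigma(\rho_\op(f))=\rho_\op(f)^\widehatss(\ts)$; and the fourth part uses that $\rho_\op$, being a $\sp\ast$-homomorphism of commutative \Calgebras, preserves the modulus, so that $\rho_\op(f)=0$ forces $\varepsilon\,\npm_\op(\{\abs{f}\geq\varepsilon\})\leq\rho_\op(\abs{f})=0$ for every $\varepsilon>0$, while the converse is immediate. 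Thus the proof reduces to establishing part~\partref{part:convergences}.

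For the first convergence in part~\partref{part:convergences}, I would use linearity to write $\rho_\op(f_n)-\rho_\op(f)=\rho_\op(f_n-f)=\ointspectrum{(f_n-f)}$, observe that $f_n-f\to 0$ $\npm_\op$-almost everywhere and that $\abs{f_n-f}$ is $\npm_\op$-almost everywhere dominated by a constant — which is $\npm_\op$-integrable since $\npm_\op$ is finite — and then apply the dominated convergence theorem for the order integral \cite[Theorem~6.13]{de_jeu_jiang:2022a} to the real and imaginary parts of $f_n-f$ separately. This gives $\ointspectrum{(f_n-f)}\soto 0$ in $\centrebl$, hence in $\centreblC$, which is exactly $\rho_\op(f_n)\soto\rho_\op(f)$.

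For the second convergence I would deduce it from the first. Picking $(w_n)_{n=1}^\infty$ in $\centrebl$ with $w_n\downarrow 0$ and $\abs{\rho_\op(f_n)-\rho_\op(f)}\leq w_n$, the inequality $\abs{Sz}\leq\abs{S}\abs{z}$ of \cref{eq:operator_inequality} (indeed the equality of \cref{res:four_equalities}) yields $\abs{\rho_\op(f_n)z-\rho_\op(f)z}\leq w_n\abs{z}$ for $z\in\blC$, and it remains to see that $w_n\abs{z}\downarrow 0$ in $\blC$. This is the one point I expect to require some care: it amounts to $\sigma$-order continuity of evaluation at a fixed positive vector on $\centrebl$. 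I would dispose of it by noting that $\abs{\rho_\op(f_n)}=\rho_\op(\abs{f_n})\leq M\idmap$, where $M$ is the uniform bound on the $\abs{f_n}$, so that $(\rho_\op(f_n))_{n=1}^\infty$ is an order-bounded sequence of orthomorphisms on $\blC$, and then invoking \cite[Theorem~9.4]{deng_de_jeu:2021}, by which $\sigma$-order convergence of such a sequence in $\centreblC$ is equivalent to its pointwise $\sigma$-order convergence on $\blC$; alternatively one can localise to the principal ideal $J_{\abs{z}}$ and use the lattice isomorphism of \cref{res:action_of_centers}\partref{part:action_of_centers_1}. This completes the plan.
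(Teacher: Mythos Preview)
Your proposal is correct and follows essentially the same route as the paper: the first four parts are already proved in the discussion preceding the statement, the first convergence in part~\partref{part:convergences} is obtained from the dominated convergence theorem \cite[Theorem~6.13]{de_jeu_jiang:2022a}, and the second convergence is deduced from the first via \cite[Theorem~9.4]{deng_de_jeu:2021} using that the sequence is order bounded in the orthomorphisms. Your write-up is somewhat more detailed than the paper's, but the argument is the same.
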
	

The convergence $\rho_\op(f_n)\soto\rho_\op(f)$ in part~\partref{part:convergences} is also in $\regularblC$, of which $\centreblC$ is an order ideal.
When $\bl$ is order continuous, the convergence $\rho_\op(f_n)z\soto\rho_\op(f)z$ in part~\partref{part:convergences} is also in norm.

\begin{remark}\label{rem:freudenthal}
It follows from part~\partref{part:specific_spectral_measure_1} of \cref{res:specific_spectral_measure} that $\op$ is a uniform limit of linear combinations $\sum_{i=1}^n\lambda_i\proj{i}$ of disjoint order projections where the $\lambda_i$ are in $\specop$.  It follows already from Freudenthal's theorem that $\op$ is a uniform limit of such linear combinations, but not that they can be chosen to be in $\specop$.
\end{remark}	

More can be said about the image of $\boundedmeasfunspectrumR$ and of its positive cone under $\rho_\op$ when $\centrebl$ has the countable sup property. This is, e.g., the case when $\bl$ is separable; see \cite[Section~2.2]{de_jeu_jiang:2021c}. For this, we need a definition.

\begin{definition}\label{def:up-down}
Let $S$ be a non-empty subset of $\centrebl$. We define
\begin{align*}
	S^\up&\coloneqq\big\{\!\tpskip x\in\centrebl\tnskip:\! \!\text{ there exists a net }\net{x}\! \text{ in }\!S\!\text{ such that } x_\lambda\!\uparrow\! x \text{ in }\centrebl\big\}\\
   \intertext{and its sequential version}
	S^\ups&\coloneqq\left\{\!\tpskip x\in\centrebl\tnskip:\!\! \text{ there exists a sequence }\seq{x}\! \text{ in }\!S\!\text{ such that } x_n\!\uparrow\! x \text{ in }\centrebl\!\right\}\!,
\end{align*}
and define $S^\down$ and $S^\downs$ similarly.\footnote{In \cite{aliprantis_burkinshaw_POSITIVE_OPERATORS_SPRINGER_REPRINT:2006}, our $S^\up$, $S^\down$, $S^\ups$, and $S^\downs$ are denoted by $S^\uparrow$, $S^\downarrow$, $S^\upharpoonleft$, and $S^\downharpoonleft$, respectively; in \cite{de_pagter:1983}, they are $S^\uparrow$, $S^\downarrow$, $S^{\uparrow_\omega}$, and $S^{\downarrow_\omega}$, respectively. Our notation may be a little clearer in a smaller font.} We set $S^{\ups\downs}\coloneqq\left(S^\ups\right)^{\raisebox{-2.5pt}{$\scriptstyle\downs$}}$ and $S^{\downs\ups}\coloneqq\left(S^\downs\right)^{\raisebox{-2.5pt}{$\scriptstyle\ups$}}$.
\end{definition}

The next result, the hypotheses of which are satisfied for all separable order continuous real Banach lattices, is the specialisation of \cite[Theorem~6.4(8)]{de_jeu_jiang:2021c} to our case. The key to its proof is the use of the monotone convergence theorem for the order integral (see \cite[Theorem~6.9]{de_jeu_jiang:2022b}) to exploit the countable sup property. Recall that the image of $\contspectrumR$ under $\rho_\op$ is the closed unital subalgebra of $\centrebl$ that is generated by $\op$.

\begin{theorem}\label{eq:ups_and_downs}
	Let $\bl$ be a \Dc\ real Banach lattice with a normal centre. Suppose that $\centrebl$ has the countable sup property. Take $\op\in\centrebl$. Then:
	\begin{align*}
		\rho_\op(\boundedmeasfunspectrumR^+)&=[\rho_\op(\boundedmeasfunspectrumR^+)]^\up=[\rho_\op(\boundedmeasfunspectrumR^+)]^\down	\\
		&=[\rho_\op(\contspectrumR^+)]^{\ups\downs}=[\rho_\op(\contspectrumR^+)]^{\downs\ups}
		\intertext{and}
		\rho_\op(\boundedmeasfunspectrumR)&=[\rho_\op(\boundedmeasfunspectrumR)]^\up=[\rho_\op(\boundedmeasfunspectrumR)]^\down	\\
		&=[\rho_\op(\contspectrumR)]^{\ups\downs}=[\rho_\op(\contspectrumR)]^{\downs\ups}.
	\end{align*}
\end{theorem}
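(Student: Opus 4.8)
The plan is to derive the two displayed chains of equalities as the specialisation of \cite[Theorem~6.4(8)]{de_jeu_jiang:2021c} to the positive linear map $\pi\colon\contspectrumR\to\centrebl$ obtained by restricting $\rho_\op$ to $\contspectrumR$; this $\pi$ is positive, in fact a vector lattice homomorphism, being the restriction to real parts of the $\sp\ast$-homomorphism $\rho_\op$. First I would check that $\pi$ satisfies the hypotheses of that theorem: $\centrebl$ is Dedekind complete because $\bl$ is, it is normal by assumption, and it has the countable sup property by assumption. Then I would identify the data that \cite[Theorem~6.4(8)]{de_jeu_jiang:2021c} attaches to $\pi$ with objects already at hand: by \cref{eq:functional_calculus_definition} we have $\pi(f)=\ointspectrum{f}$ for $f\in\contspectrumR$, so by the uniqueness clause of \cref{res:riesz_representation_theorems_merged} the measure representing $\pi$ is $\npm_\op$, and the functional calculus on bounded Borel functions that the cited theorem attaches to $\pi$ is precisely $f\mapsto\ointspectrum{f}=\rho_\op(f)$. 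With these identifications the cited result yields exactly the two displayed chains of equalities.

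For orientation I would also recall the two mechanisms behind that theorem. The equalities $\rho_\op(\boundedmeasfunspectrumR^+)=[\rho_\op(\boundedmeasfunspectrumR^+)]^\up=[\rho_\op(\boundedmeasfunspectrumR^+)]^\down$, and their analogues without the $^+$, come from a direct argument: if $x_\lambda\uparrow x$ with $x_\lambda=\rho_\op(g_\lambda)$ and $g_\lambda\in\boundedmeasfunspectrumR^+$, then the countable sup property of $\centrebl$ produces an increasing sequence $x_{\lambda_n}\uparrow x$, and setting $h_n\coloneqq(g_{\lambda_1}\vee\dots\vee g_{\lambda_n})\wedge\norm{x}\onefunction$ gives an increasing, uniformly bounded sequence in $\boundedmeasfunspectrumR^+$ with $\rho_\op(h_n)=x_{\lambda_n}$; here one uses that $\rho_\op$ is a vector lattice homomorphism, that $\rho_\op(h)\geq 0$ forces $h\geq 0$ $\npm_\op$-almost everywhere (part of \cref{res:specific_spectral_measure}), and that $\idmap$ is an order unit of the AM-space $\centrebl$. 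The monotone convergence theorem for the order integral (\cite[Theorem~6.9]{de_jeu_jiang:2022b}) then identifies $x$ with $\rho_\op$ of the pointwise supremum of the $h_n$, so $x\in\rho_\op(\boundedmeasfunspectrumR^+)$. The decreasing case and the versions without the positivity restriction are entirely analogous. Since $\rho_\op(\contspectrumR^+)\subseteq\rho_\op(\boundedmeasfunspectrumR^+)$ and every sequence is a net, this already yields $[\rho_\op(\contspectrumR^+)]^{\ups\downs}\subseteq\rho_\op(\boundedmeasfunspectrumR^+)$, and likewise for $[\rho_\op(\contspectrumR^+)]^{\downs\ups}$.

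The substantial content, and the step I expect to be the main obstacle, is the reverse inclusion $\rho_\op(\boundedmeasfunspectrumR^+)\subseteq[\rho_\op(\contspectrumR^+)]^{\ups\downs}$ together with the coincidence $[\rho_\op(\contspectrumR^+)]^{\ups\downs}=[\rho_\op(\contspectrumR^+)]^{\downs\ups}$. At the level of functions a bounded Borel function on $\specop$ lies in the Baire hierarchy over $\contspectrumR$ but generally not already at level two, so one cannot reconstruct $\rho_\op(g)$ from continuous functions by a single increasing sequence followed by a single decreasing one. What rescues the statement is that the monotone sequential limits are being formed inside $\centrebl$, which has the countable sup property; there, by the De~Pagter-type phenomenon underlying \cite[Theorem~6.4(8)]{de_jeu_jiang:2021c} (compare \cite{de_pagter:1983}), the second monotone sequential closure of a sub-vector-lattice is already stable under all further monotone sequential limits, so the hierarchy collapses to level two and the two mixed closures agree. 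Combining this collapse with the monotone convergence theorem---used once more to carry pointwise monotone approximations of bounded Borel functions by continuous ones over into $\centrebl$---produces the missing inclusion, and assembling it with the inclusions of the previous paragraph gives all the claimed equalities, first on the positive cones and then verbatim on $\boundedmeasfunspectrumR$ and $\contspectrumR$.
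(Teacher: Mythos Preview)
Your proposal is correct and matches the paper's approach exactly: the paper does not give a self-contained proof but simply states that the result is the specialisation of \cite[Theorem~6.4(8)]{de_jeu_jiang:2021c} to the present setting, with the monotone convergence theorem for the order integral as the key ingredient to exploit the countable sup property. Your additional orientation on the mechanisms behind that cited theorem (the countable-sup extraction, the lattice-homomorphism property of $\rho_\op$, and the collapse of the sequential monotone closure at level two) goes beyond what the paper itself records, but is consistent with the sources the paper cites.
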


We turn to commutants. The following follows easily from \cref{res:FPR}, the fact that the order unit norm and the operator norm coincide on $\centreblC$, and the Stone-Weierstrass theorem.

\begin{proposition}\label{res:commutant_for_bounded_operators}
	Let $\bl$ be a Dedekind complete real Banach lattice with a normal centre. Take $\op\in\centreblC$. Then the following are equivalent for a norm bounded, not necessarily regular, operator $\Xi$ on $\blC$:
	\begin{enumerate}
		\item $\Xi$ commutes with $\op$;
		\item $\Xi$ commutes with $\overline\op$;
		\item $\Xi$ commutes with $\rho_\op(f)$ for $f\in\contspectrumC$.
	\end{enumerate}	
\end{proposition}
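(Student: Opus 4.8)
The plan is to prove the chain of equivalences $(1)\Leftrightarrow(2)\Leftrightarrow(3)$ for the proposition on commutants by leveraging the Fuglede--Putnam--Rosenblum-type theorem (Theorem~\ref{res:FPR}) together with the norm agreement on $\centreblC$ and the Stone--Weierstrass theorem, exactly as the paragraph preceding the statement advertises.

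\smallskip

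\noindent\textbf{Approach.} First I would establish $(1)\Leftrightarrow(2)$. This is an immediate application of \cref{res:FPR} with $S=T$: the hypothesis $S\Xi=\Xi S$ is the special case of part~\partref{part:FPR_1} in which the two central operators coincide, and part~\partref{part:FPT_2} then reads $\overline{T}\,\Xi=\Xi\,\overline{T}$. Since \cref{res:FPR} asserts the equivalence of its two parts (not just one implication), this gives $(1)\Leftrightarrow(2)$ directly.

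\smallskip

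\noindent Next I would show $(3)\Rightarrow(1)$ and $(3)\Rightarrow(2)$, which are trivial: the identity function $\idfunction$ lies in $\contspectrumC$ and $\rho_\op(\idfunction)=\op$ by part~\partref{part:specific_spectral_measure_1} of \cref{res:specific_spectral_measure}, so commuting with all $\rho_\op(f)$ for $f\in\contspectrumC$ in particular forces commuting with $\op$; similarly $\rho_\op(\overline{\idfunction})=\overline\op$ gives commutation with $\overline\op$.

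\smallskip

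\noindent The substantive direction is $(1)$ (equivalently $(2)$) $\Rightarrow(3)$. Here I would argue as follows. Assume $\Xi$ commutes with both $\op$ and $\overline\op$ (using the already-proven $(1)\Leftrightarrow(2)$). Then $\Xi$ commutes with every element of the unital subalgebra of $\centreblC$ generated algebraically by $\op$ and $\overline\op$ --- that is, with every noncommutative polynomial in $\op$ and $\overline\op$; but since $\centreblC$ is commutative (\cref{res:isomorphism}) these are just ordinary polynomials $p(\op,\overline\op)$. By the discussion following \cref{res:specific_spectral_measure_uniqueness}, the map $f\mapsto\rho_\op(f)$ is an isometric $\sp\ast$-isomorphism from $\contspectrumC$ onto the \emph{closed} unital subalgebra of $\centreblC$ generated by $\op$ and $\overline\op$, and by the Stone--Weierstrass theorem the polynomials in $\idfunction$ and $\overline{\idfunction}$ (a self-adjoint, point-separating, unital subalgebra of $\contspectrumC$) are dense there. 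Hence for any $f\in\contspectrumC$ there is a sequence of polynomials $p_n$ with $\norm{\rho_\op(p_n(\idfunction,\overline{\idfunction}))-\rho_\op(f)}\to 0$ in the common norm on $\centreblC$, which by \cref{res:norms_are_equal} is the operator norm $\norm{\noarg}_{\blC}$. Since each $\rho_\op(p_n(\idfunction,\overline{\idfunction}))=p_n(\op,\overline\op)$ commutes with $\Xi$, and multiplication of norm-bounded operators on $\blC$ is jointly continuous in the operator norm (with $\Xi$ fixed), passing to the limit yields $\Xi\,\rho_\op(f)=\rho_\op(f)\,\Xi$. This establishes $(3)$ and closes the cycle.

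\smallskip

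\noindent\textbf{Main obstacle.} The only point requiring care is the limiting argument in $(1)\Rightarrow(3)$: one must ensure the approximation of $\rho_\op(f)$ takes place in the operator norm on $\blC$ (so that the identities $\Xi\rho_\op(p_n)=\rho_\op(p_n)\Xi$ survive the limit), which is precisely why \cref{res:norms_are_equal} --- the equality of the order unit norm and the operator norm on $\centreblC$ --- is invoked; without it, norm convergence in the order unit norm would not obviously imply convergence in the operator norm, and the commutation relation with the (merely norm-bounded, possibly non-regular) operator $\Xi$ could not be propagated. Everything else is a routine combination of \cref{res:FPR}, the functional-calculus properties in \cref{res:specific_spectral_measure}, and Stone--Weierstrass.
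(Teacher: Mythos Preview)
Your proposal is correct and follows precisely the approach the paper indicates in the sentence preceding the proposition: the equivalence $(1)\Leftrightarrow(2)$ from \cref{res:FPR} with $S=T$, the trivial direction $(3)\Rightarrow(1)$, and the passage $(1)\Rightarrow(3)$ via Stone--Weierstrass combined with \cref{res:norms_are_equal} so that the approximation is in the operator norm and survives multiplication by the merely norm-bounded $\Xi$. There is nothing to add.
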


When $\Xi$ is order continuous, the spectral projections enter the picture.

\begin{proposition}\label{res:commutant_for_order_continuous_operators}
	Let $\bl$ be a Dedekind complete real Banach lattice with a normal centre. Take $\op\in\centreblC$. Then the following are equivalent for an order continuous operator $\Xi$ on $\blC$:
	\begin{enumerate}
		\item\label{part:commutant_for_order_continuous_operators_1} $\Xi$ commutes with $\op$;
		\item\label{part:commutant_for_order_continuous_operators_2} $\Xi$ commutes with $\overline\op$;
		\item\label{part:commutant_for_order_continuous_operators_3} $\Xi$ commutes with $\rho_\op(f)$ for $f\in\contspectrumC$;
		\item\label{part:commutant_for_order_continuous_operators_4} $\Xi$ commutes with $\npm_\op(\mss)$ for $\mss\in\borelspecop$;
		\item\label{part:commutant_for_order_continuous_operators_5} $\Xi$ commutes with $\rho_\op(f)$ for $f\in\boundedmeasfunspectrumC$.
	\end{enumerate}	
\end{proposition}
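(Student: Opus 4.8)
The plan is to establish the cycle of implications $\partref{part:commutant_for_order_continuous_operators_1}\Rightarrow\partref{part:commutant_for_order_continuous_operators_2}\Rightarrow\partref{part:commutant_for_order_continuous_operators_3}\Rightarrow\partref{part:commutant_for_order_continuous_operators_5}\Rightarrow\partref{part:commutant_for_order_continuous_operators_4}\Rightarrow\partref{part:commutant_for_order_continuous_operators_1}$. An order continuous operator on $\blC$ is in particular norm bounded, so \cref{res:commutant_for_bounded_operators} applies to $\Xi$ and immediately yields the equivalence of $\partref{part:commutant_for_order_continuous_operators_1}$, $\partref{part:commutant_for_order_continuous_operators_2}$, and $\partref{part:commutant_for_order_continuous_operators_3}$; this disposes of the first two implications. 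The implication $\partref{part:commutant_for_order_continuous_operators_5}\Rightarrow\partref{part:commutant_for_order_continuous_operators_4}$ is immediate, since $\npm_\op(\mss)=\rho_\op(\indicator{\mss})$ for $\mss\in\borelspecop$ by the definition of the order integral of an elementary function.

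The two remaining implications rest on a single observation about the set
\[
\mathcal D\coloneqq\set{f\in\boundedmeasfunspectrumC:\Xi\rho_\op(f)=\rho_\op(f)\,\Xi}.
\]
This is a $\CC$-linear subspace of $\boundedmeasfunspectrumC$ containing $\onefunction$, and, crucially, it is closed under limits of $\npm_\op$-almost everywhere uniformly bounded, $\npm_\op$-almost everywhere convergent sequences. To see the latter, let $(f_n)_{n=1}^\infty$ be such a sequence in $\mathcal D$ with limit $f$, and fix $z\in\blC$. By part~\partref{part:convergences} of \cref{res:specific_spectral_measure} we have $\rho_\op(f_n)z\soto\rho_\op(f)z$ and $\rho_\op(f_n)(\Xi z)\soto\rho_\op(f)(\Xi z)$; viewing these as order convergent nets and using that the order continuous operator $\Xi$ preserves order convergence, the first of them gives $\Xi\rho_\op(f_n)z\oto\Xi\rho_\op(f)z$, while $\Xi\rho_\op(f_n)z=\rho_\op(f_n)(\Xi z)$ for every $n$. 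Since order limits in the Archimedean lattice $\blC$ are unique, $\Xi\rho_\op(f)z=\rho_\op(f)(\Xi z)$, so $f\in\mathcal D$.

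Now $\partref{part:commutant_for_order_continuous_operators_3}$ says precisely that $\contspectrumC\subseteq\mathcal D$; as the continuous functions on $\specop$ separate points, are closed under multiplication, and contain the constants, they generate $\borelspecop$, and the functional monotone class theorem — or, since $\specop\subseteq\CC$ is metrizable, the fact that the bounded Borel functions form the smallest family of bounded functions that contains $\contspectrumC$ and is closed under bounded pointwise sequential limits — forces $\mathcal D=\boundedmeasfunspectrumC$, which is $\partref{part:commutant_for_order_continuous_operators_5}$. On the other hand, $\partref{part:commutant_for_order_continuous_operators_4}$ together with the $\CC$-linearity of $\rho_\op$ puts every finite $\CC$-linear combination of indicator functions of Borel subsets of $\specop$ into $\mathcal D$; these are uniformly dense in $\boundedmeasfunspectrumC$, and $\mathcal D$ is closed under uniform limits (a special case of the above), so again $\mathcal D=\boundedmeasfunspectrumC$. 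In particular $\idfunction\in\mathcal D$, and hence $\Xi\op=\Xi\rho_\op(\idfunction)=\rho_\op(\idfunction)\,\Xi=\op\,\Xi$, which is $\partref{part:commutant_for_order_continuous_operators_1}$.

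I expect the closure property of $\mathcal D$ to be the only genuine obstacle: one must take care that $\Xi$ may be pulled inside a $\sigma$-order limit — which is why it is cleanest to pass to ordinary order convergence of sequences regarded as nets, where order continuity applies verbatim and limits are unique — and that part~\partref{part:convergences} of \cref{res:specific_spectral_measure} is invoked with hypotheses that genuinely hold. The one preliminary to record at the outset is that an order continuous operator on $\blC$ is norm bounded, which is what makes \cref{res:commutant_for_bounded_operators} available here.
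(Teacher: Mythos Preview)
Your proof is correct, but the route from \partref{part:commutant_for_order_continuous_operators_3} to \partref{part:commutant_for_order_continuous_operators_5} differs from the paper's. You appeal to a monotone class/Baire class argument on the metrizable space $\specop$, using that $\mathcal D$ is closed under bounded pointwise sequential limits (via part~\partref{part:convergences} of \cref{res:specific_spectral_measure}) to pass directly from $\contspectrumC$ to all of $\boundedmeasfunspectrumC$. The paper instead goes through \partref{part:commutant_for_order_continuous_operators_4} first: it uses the explicit formula \cref{eq:measure_of_open_subset} for $\npm_\op(V)$ as a supremum of values $\rho_\op(f)$ with $f\in\contspectrumR$, so order continuity of $\Xi$ gives commutation with $\npm_\op(V)$ for open $V$; then outer regularity of $\npm_\op$ (a property established earlier) extends this to all Borel sets, and uniform approximation by step functions (together with norm continuity of $\rho_\op$, rather than order convergence) gives \partref{part:commutant_for_order_continuous_operators_5}. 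Your argument is self-contained modulo the functional monotone class theorem and uses only the dominated convergence property of the functional calculus; the paper's argument avoids external measure-theoretic machinery by exploiting the regularity of $\npm_\op$ that it has already established. Both are clean; the paper's is a little more internal to the theory it has built.
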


\begin{proof}
	
We recall that an order continuous operator on a real vector lattice is order bounded. Hence $\Xi$, which is then also regular, is norm bounded.

We know from \cref{res:commutant_for_bounded_operators} that the parts~\partref{part:commutant_for_order_continuous_operators_1}, \partref{part:commutant_for_order_continuous_operators_2}, and~\partref{part:commutant_for_order_continuous_operators_3} are equivalent.

When $\Xi$ commutes with $\rho_\op(f)$ for $f\in\contspectrumC$, its order continuity and \cref{eq:measure_of_open_subset} show that it commutes with $\npm_T(V)$ when $V\subseteq\specop$ is (relatively) open. The order continuity and the outer regularity of $\npm_\op$ then imply that it commutes with $\npm_\op(\mss)$ for $\mss\in\borelspecop$.  Hence part~\partref{part:commutant_for_order_continuous_operators_3} implies part~\partref{part:commutant_for_order_continuous_operators_4}.

It follows from the fact that every $f\in\boundedmeasfunspectrumC$ can be approximated uniformly by step functions and the norm continuity of $\rho_\op$ that part~\partref{part:commutant_for_order_continuous_operators_4} implies part~\partref{part:commutant_for_order_continuous_operators_5}.

It is trivial that part~\partref{part:commutant_for_order_continuous_operators_5} implies part~\partref{part:commutant_for_order_continuous_operators_1}.
\end{proof}

It follows from \cref{res:commutant_for_order_continuous_operators} that, similarly to normal operators, the spectral projections of $\op\in\centreblC$ are in its bicommutant in the following sense.

\begin{corollary}\label{res:bicommutant}
Let $\bl$ be a Dedekind complete real Banach lattice with a normal centre. Take $\op\in\centreblC$. For $f\in\boundedmeasfunspectrumC$, $\rho_\op(f)$ commutes with every order continuous operator on $\blC$ that commutes with $\op$. In particular, $\npm_\op(\mss)$ has this property for $\mss\in\borelspecop$.
\end{corollary}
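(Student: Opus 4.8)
The plan is to read this off directly from \cref{res:commutant_for_order_continuous_operators}. Fix $f\in\boundedmeasfunspectrumC$ and let $\Xi$ be an order continuous operator on $\blC$ that commutes with $\op$. As noted in the proof of \cref{res:commutant_for_order_continuous_operators}, such a $\Xi$ is automatically order bounded, hence regular, hence norm bounded, so the hypotheses of that proposition genuinely apply. Since $\Xi$ then satisfies part~\partref{part:commutant_for_order_continuous_operators_1}, it also satisfies part~\partref{part:commutant_for_order_continuous_operators_5}, so that $\Xi$ commutes with $\rho_\op(g)$ for every $g\in\boundedmeasfunspectrumC$; in particular, taking $g=f$, we get that $\Xi$ commutes with $\rho_\op(f)$. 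As $\Xi$ was an arbitrary order continuous operator commuting with $\op$, this proves the first assertion.

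For the final statement, I would observe that $\indicator{\mss}\in\boundedmeasfunspectrumC$ for every $\mss\in\borelspecop$, and that, by the definition of the order integral of an elementary function, $\rho_\op(\indicator{\mss})=\ointspectrum{\indicator{\mss}}=\npm_\op(\mss)$. Applying the first part of the statement with $f=\indicator{\mss}$ then shows that $\npm_\op(\mss)$ commutes with every order continuous operator on $\blC$ that commutes with $\op$.

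There is no genuine obstacle here: the corollary is an immediate consequence of the equivalence of parts~\partref{part:commutant_for_order_continuous_operators_1} and~\partref{part:commutant_for_order_continuous_operators_5} of \cref{res:commutant_for_order_continuous_operators}. All the actual work — passing from operators commuting with $\op$ to operators commuting with $\rho_\op(f)$ on continuous $f$ via \cref{res:commutant_for_bounded_operators}, then upgrading to open sets using order continuity of $\Xi$ and \cref{eq:measure_of_open_subset}, then to all Borel sets using the outer regularity of $\npm_\op$, and finally to all bounded Borel $f$ by uniform approximation by step functions together with the norm continuity of $\rho_\op$ — has already been carried out in the proof of that proposition, so here it only remains to invoke it and to identify $\npm_\op(\mss)$ with $\rho_\op(\indicator{\mss})$.
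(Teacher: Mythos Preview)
Your proposal is correct and matches the paper's approach exactly: the paper simply states that the corollary follows from \cref{res:commutant_for_order_continuous_operators} without giving any further argument, and your write-up spells out precisely this implication (the equivalence of \partref{part:commutant_for_order_continuous_operators_1} and \partref{part:commutant_for_order_continuous_operators_5}) together with the identification $\npm_\op(\mss)=\rho_\op(\indicator{\mss})$.
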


As a first application of the functional calculus, we include a polar decomposition for general central operators, supplementing \cref{res:polar_decomposition_of_invertibles}. The easy proof of \cite[Theorem~12.35(b)]{rudin_FUNCTIONAL_ANALYSIS_SECOND_EDITION:1991} yields the following in our case.

\begin{proposition}\label{res:polar_decomposition_general}
	Let $\bl$ be a Dedekind complete real Banach lattice with a normal centre. Take $\op\in\centreblC$. Then there exist $P,U\in\rho_\op(\boundedmeasfunspectrumC)\subseteq\centreblC$ such that $P\geq 0$, $\abs{U}=\idmap$, and $\op=PU$.
\end{proposition}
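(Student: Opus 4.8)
The plan is to follow the proof of \cite[Theorem~12.35(b)]{rudin_FUNCTIONAL_ANALYSIS_SECOND_EDITION:1991} essentially verbatim, with the bounded Borel functional calculus $\rho_\op$ from \cref{res:specific_spectral_measure} playing the role of the spectral theorem for normal operators. Define two functions on the spectrum $\specop$: let $g\colon\specop\to\RR$ be $g(\lambda)\coloneqq\abs{\lambda}$, and let $v\colon\specop\to\CC$ be given by $v(\lambda)\coloneqq\lambda/\abs{\lambda}$ for $\lambda\neq 0$ and $v(0)\coloneqq 1$. Then set $P\coloneqq\rho_\op(g)$ and $U\coloneqq\rho_\op(v)$. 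To see that these are legitimate applications of $\rho_\op$, note that $g$ is continuous on the compact set $\specop$, hence bounded and Borel, while $v$ is continuous on $\specop\setminus\{0\}$ and constant on the (closed) set $\specop\cap\{0\}$, hence Borel, with $\abs{v}\equiv\onefunction$ so that it is bounded; thus $g,v\in\boundedmeasfunspectrumC$ and $P,U\in\rho_\op(\boundedmeasfunspectrumC)\subseteq\centreblC$.

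It then remains to check the three asserted properties, each of which is immediate from the structure of $\rho_\op$ as a unital $\sp\ast$-homomorphism. First, $g\geq 0$ and $\rho_\op$ is a positive map (being a $\sp\ast$-homomorphism between commutative \Calgebras\ it carries $b^\ast b$ to $\rho_\op(b)^\ast\rho_\op(b)\geq 0$), so $P\geq 0$. Second, since $\rho_\op$ preserves the modulus, as recorded right after \cref{res:specific_spectral_measure_uniqueness}, we get $\abs{U}=\abs{\rho_\op(v)}=\rho_\op(\abs{v})=\rho_\op(\onefunction)=\idmap$. Third, $g(\lambda)v(\lambda)=\lambda$ for every $\lambda\in\specop$ (clear for $\lambda\neq 0$, and both sides vanish at $\lambda=0$), so $gv=\idfunction$ on $\specop$, whence $PU=\rho_\op(g)\rho_\op(v)=\rho_\op(gv)=\rho_\op(\idfunction)=\op$ by part~\partref{part:specific_spectral_measure_1} of \cref{res:specific_spectral_measure}.

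There is no real obstacle here; the only point worth flagging is that $0$ may lie in $\specop$, so the ``angular part'' $v$ is genuinely only Borel (not continuous) on $\specop$, which is precisely why $P$ and $U$ must be produced by the bounded Borel functional calculus rather than by the continuous functional calculus on $\contspectrumC$. Everything else — measurability, boundedness, positivity, modulus-preservation, and multiplicativity — is supplied directly by \cref{res:specific_spectral_measure}.
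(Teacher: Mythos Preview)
Your proof is correct and follows exactly the approach the paper indicates: it simply defers to the argument of \cite[Theorem~12.35(b)]{rudin_FUNCTIONAL_ANALYSIS_SECOND_EDITION:1991}, which is precisely the construction you spell out via $g(\lambda)=\abs{\lambda}$ and $v(\lambda)=\lambda/\abs{\lambda}$ (with $v(0)=1$) applied through the bounded Borel functional calculus $\rho_\op$.
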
	

\subsection{Eigenvalues}\label{subsec:eigenvalues}
We start with a few remarks.
For $\op\in\centreblC$ and $\lambda\in\CC$, the kernel of $\op-\lambda\idmap$ is a band. Hence it is generated by its positive elements. For $\mss\in\borelspecop$, $\npm_\op(\mss)z=0$ for $z=x+ \iu y$ if and only if $\npm_\op(\mss)x^\pm=\npm_\op(\mss)y^\pm=0$. With this in mind, the following is easily established.

\begin{lemma}\label{res:vanishing}
	Let $\mss_1,\mss_2,\ldots\in\borel(\specop)$ be pairwise disjoint. Take $z\in\blC$. Then $\npm_\op(\bigcup_{n=1}^\infty\mss_n)z=0$ if and only if $\npm_\op(\mss_n)z=0$ for $n\geq 1$.
\end{lemma}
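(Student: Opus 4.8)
The plan is to reduce the statement to the real, positive-element case, where $\sigma$-order additivity of the spectral measure applied to a fixed vector does the work. First I would note that, by the decomposition $z = x + \iu y$ with $x, y \in \bl$ and the splitting $x = x^+ - x^-$, $y = y^+ - y^-$, together with the remark preceding the lemma that $\npm_\op(\mss)z = 0$ iff $\npm_\op(\mss)x^+ = \npm_\op(\mss)x^- = \npm_\op(\mss)y^+ = \npm_\op(\mss)y^- = 0$, it suffices to prove the claim for a single $w \in \posbl$. So fix $w \geq 0$ and set $\mss \coloneqq \bigcup_{n=1}^\infty \mss_n$.

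For the forward direction, assume $\npm_\op(\mss)w = 0$. Since $\npm_\op$ is a spectral measure with $\npm_\op(\mss_n) = \npm_\op(\mss_n \cap \mss) = \npm_\op(\mss_n)\npm_\op(\mss)$, we get $\npm_\op(\mss_n)w = \npm_\op(\mss_n)\bigl(\npm_\op(\mss)w\bigr) = 0$ for every $n$. For the converse, assume $\npm_\op(\mss_n)w = 0$ for all $n$. By the $\sigma$-additivity in \cref{def:positive_pososext_valued_measure}\partref{part:pososext_valued_measure_1}, $\npm_\op(\mss) = \sosum_{n=1}^\infty \npm_\op(\mss_n)$ in $\centrebl$, which, as recorded in the discussion after \cref{res:general_spectral_measure}, is equivalent to $\npm_\op(\mss)w = \sosum_{n=1}^\infty \npm_\op(\mss_n)w$ in $\blC$; every term on the right is $0$, so $\npm_\op(\mss)w = 0$. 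Unwinding the reduction gives $\npm_\op(\mss)z = 0$.

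The only point requiring a little care is the passage from $\sigma$-order convergence of the operator series $\sum_{n=1}^N \npm_\op(\mss_n)$ in $\centrebl$ to the vectorial statement $\sum_{n=1}^N \npm_\op(\mss_n)w \soto \npm_\op(\mss)w$; but this is exactly the equivalence already spelled out in the paragraph following \cref{res:general_spectral_measure} (``the fact that $\npm_\op(\mss) = \sosum_{n=1}^\infty \npm(\mss_n)$ is equivalent to the fact that $\npm_\op(\mss)z = \sosum_{n=1}^\infty \npm(\mss_n)z$ for $z \in \blC$''), so I would simply invoke it. I do not anticipate a genuine obstacle here; the lemma is a bookkeeping consequence of multiplicativity and $\sigma$-additivity of $\npm_\op$ together with the band property of kernels, and the entire content is in correctly organising the reduction to positive real vectors.
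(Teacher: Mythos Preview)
Your proposal is correct and follows exactly the route the paper signals: the paper does not spell out a proof beyond the sentence ``With this in mind, the following is easily established,'' where ``this'' is precisely the reduction to positive real parts $x^\pm,y^\pm$ that you carry out. Your use of multiplicativity for the forward implication and of $\sigma$-additivity (via the equivalence recorded after \cref{res:general_spectral_measure}) for the converse is the intended argument, and the one point you flag as needing care is handled by the reference you cite.
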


With \cref{res:vanishing} available, the methods of proof of \cite[Theorem~12.28]{rudin_FUNCTIONAL_ANALYSIS_SECOND_EDITION:1991} yields the following.

\begin{proposition}
Let $\bl$ be a Dedekind complete real Banach lattice with a normal centre. Take $\op\in\centreblC$. Then
\[
\ker \rho_\op(f)=\npm_\op(\set{\lambda\in\specop: f(\lambda)=0})(\blC)
\]
for $f\in\boundedmeasfunspectrumC$.
\end{proposition}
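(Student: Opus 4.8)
The statement asserts that $\ker\rho_\op(f) = \npm_\op(Z)(\blC)$, where $Z \coloneqq \{\lambda\in\specop : f(\lambda)=0\}$. The natural strategy, following the Hilbert space template of \cite[Theorem~12.28]{rudin_FUNCTIONAL_ANALYSIS_SECOND_EDITION:1991}, is to prove the two inclusions separately, using the functional calculus $\rho_\op$ (a unital $\sp\ast$-homomorphism by \cref{res:specific_spectral_measure}\partref{part:specific_spectral_measure_1}) together with the characterisation of its kernel via $\npm_\op$-almost everywhere vanishing.

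\textbf{The inclusion $\supseteq$.} First I would observe that $\chi_Z \cdot f = 0$ everywhere on $\specop$, so that $\rho_\op(f)\npm_\op(Z) = \rho_\op(f)\rho_\op(\chi_Z) = \rho_\op(\chi_Z f) = \rho_\op(0) = 0$, using multiplicativity of $\rho_\op$ and the fact that $\rho_\op(\chi_\mss) = \npm_\op(\mss)$. Hence $\npm_\op(Z)(\blC) \subseteq \ker\rho_\op(f)$.

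\textbf{The inclusion $\subseteq$.} This is the step I expect to require the most care. Suppose $z\in\blC$ with $\rho_\op(f)z = 0$; I want to show $\npm_\op(Z)z = z$, equivalently $\npm_\op(\specop\setminus Z)z = 0$. Write $V_n \coloneqq \{\lambda\in\specop : \abs{f(\lambda)} \geq 1/n\}$, so $\specop\setminus Z = \bigcup_{n=1}^\infty V_n$ with the $V_n$ increasing. By \cref{res:vanishing} (applied to a disjointification of the $V_n$, or directly after noting $\npm_\op(V_n)z$ is increasing in the appropriate sense), it suffices to show $\npm_\op(V_n)z = 0$ for each $n$. For fixed $n$, define $g_n \in \boundedmeasfunspectrumC$ by $g_n(\lambda) \coloneqq f(\lambda)^{-1}\chi_{V_n}(\lambda)$; this is bounded since $\abs{f}\geq 1/n$ on $V_n$. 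Then $g_n \cdot f = \chi_{V_n}$, so $\npm_\op(V_n)z = \rho_\op(\chi_{V_n})z = \rho_\op(g_n)\rho_\op(f)z = \rho_\op(g_n)(\rho_\op(f)z) = \rho_\op(g_n)\cdot 0 = 0$, where I have used that $\rho_\op$ is an algebra homomorphism and $\rho_\op(f)z = 0$. This gives $\npm_\op(V_n)z = 0$ for all $n$, hence $\npm_\op(\specop\setminus Z)z = 0$ by \cref{res:vanishing}, so $z = \npm_\op(\specop)z = \npm_\op(Z)z \in \npm_\op(Z)(\blC)$.

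\textbf{Main obstacle.} The only genuinely non-routine point is passing from "$\npm_\op(V_n)z = 0$ for all $n$" to "$\npm_\op(\specop\setminus Z)z = 0$". Here one cannot simply invoke continuity as in the Hilbert space case; instead one uses \cref{res:vanishing}, which handles exactly a countable (disjoint) union and reduces the vanishing of $\npm_\op$ on the union to the vanishing on the pieces. Writing $\specop\setminus Z = \bigsqcup_{n}(V_n\setminus V_{n-1})$ (with $V_0 \coloneqq \emptyset$) and noting each $V_n\setminus V_{n-1}\subseteq V_n$, so that $\npm_\op(V_n\setminus V_{n-1})z = \npm_\op(V_n\setminus V_{n-1})\npm_\op(V_n)z = 0$ because $\npm_\op$ is a spectral measure, lets \cref{res:vanishing} conclude. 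A brief remark recording that $\ker\rho_\op(f)$ is automatically a band (being the kernel of an order continuous — indeed central — operator) may also be worth including, mirroring the analogous comment for $\ker(\op-\lambda\idmap)$ preceding \cref{res:vanishing}, but it is not needed for the proof itself.
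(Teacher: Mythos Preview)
Your proof is correct and follows exactly the approach the paper indicates: the paper's own proof consists solely of the remark that, with \cref{res:vanishing} available, the argument of \cite[Theorem~12.28]{rudin_FUNCTIONAL_ANALYSIS_SECOND_EDITION:1991} goes through, and your write-up is precisely that argument, including the use of \cref{res:vanishing} (via disjointification of the $V_n$) to replace the strong-operator continuity step in Rudin's Hilbert space proof.
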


For $\lambda\in\specop$, set $\proj{\lambda}\coloneqq\npm_\op(\{\lambda\})$. On observing that
\begin{equation}\label{eq:zero_product}
\proj{\lambda}\proj{\mu}=\npm_\op(\{\lambda\}\cap\{\mu\})=\npm_\op(\emptyset)=0
\end{equation}
when $\lambda,\mu\in\specop$ are different, the following is now clear. Its part~\partref{part:disjointness_of_eigenbands} also holds when $\lambda,\mu\in\RR$ are different and $\op$ is an orthomorphism on a real vector lattice; see \cite[Lemma~2.1]{boulabiar_buskes_sirotkin:2006}.

\begin{corollary}\label{res:eigenvalues}
	Let $\bl$ be a Dedekind complete real Banach lattice with a normal centre.  Take $\op\in\centreblC$.  Then:
	\begin{enumerate}
		\item $\ker (\op-\lambda\idmap)=\proj{\lambda}(\blC)$ for $\lambda\in\specop$;
		\item $\lambda\in\specop$ is an eigenvalue of $\op$ if and only if $\proj{\lambda}\neq 0$;
		\item every isolated point of $\specop$ is an eigenvalue of $\op$;	
		\item\label{part:disjointness_of_eigenbands} $\ker(\op-\lambda\idmap)\perp\ker(\op-\mu\idmap)$ when $\lambda,\mu\in\CC$ are different.		
\end{enumerate}
\end{corollary}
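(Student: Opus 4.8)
The plan is to obtain all four parts as immediate consequences of the proposition immediately preceding this corollary (the one describing $\ker\rho_\op(f)$), of the properties of the spectral measure $\npm_\op$ recorded above and in \cref{res:specific_spectral_measure_uniqueness}, and of \cref{eq:zero_product}; no genuinely new argument is required.

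For part~(1), fix $\lambda\in\specop$ and consider the function $\idfunction-\lambda\onefunction$ on $\specop$, which lies in $\boundedmeasfunspectrumC$ since $\specop$ is compact. As the functional calculus $\rho_\op$ is unital with $\rho_\op(\idfunction)=\op$ (see \cref{res:specific_spectral_measure}), we have $\rho_\op(\idfunction-\lambda\onefunction)=\op-\lambda\idmap$; and since $\lambda\in\specop$, the zero set of $\idfunction-\lambda\onefunction$ inside $\specop$ is the singleton $\set{\lambda}$. The preceding proposition then yields $\ker(\op-\lambda\idmap)=\npm_\op(\set{\lambda})(\blC)=\proj{\lambda}(\blC)$.

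Parts~(2) and~(3) follow at once. Because $\proj{\lambda}$ is an order projection, $\proj{\lambda}(\blC)=\set{0}$ if and only if $\proj{\lambda}=0$; combined with part~(1), this means that $\op-\lambda\idmap$ has non-trivial kernel --- that is, $\lambda$ is an eigenvalue of $\op$ --- precisely when $\proj{\lambda}\neq 0$, which is part~(2). For part~(3), if $\lambda$ is an isolated point of $\specop$, then $\set{\lambda}$ is a non-empty relatively open subset of $\specop$, so $\proj{\lambda}=\npm_\op(\set{\lambda})>0$ by \cref{res:specific_spectral_measure_uniqueness}; in particular $\proj{\lambda}\neq 0$, and part~(2) exhibits $\lambda$ as an eigenvalue.

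For part~(4), take distinct $\lambda,\mu\in\CC$. If $\lambda\notin\specop$ (the case $\mu\notin\specop$ being symmetric), then $\op-\lambda\idmap$ is invertible, so $\ker(\op-\lambda\idmap)=\set{0}$ and the disjointness is trivial. If both $\lambda,\mu\in\specop$, then part~(1) identifies the two kernels with $\proj{\lambda}(\blC)$ and $\proj{\mu}(\blC)$, while \cref{eq:zero_product} gives $\proj{\lambda}\proj{\mu}=0$; since the ranges of two order projections with vanishing product are disjoint bands, this is the assertion (passing to the complexification through the modulus reduces it to the elementary real fact that band projections with zero product have disjoint ranges). There is no real obstacle here; the only point deserving any attention is this last one, namely remembering to dispose of the case where $\lambda$ or $\mu$ lies outside $\specop$ and to phrase disjointness in $\blC$ via moduli, while everything else is a direct reading-off of results already established.
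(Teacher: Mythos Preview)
Your proposal is correct and follows exactly the route the paper intends: the paper offers no detailed proof but simply declares the corollary clear after the preceding proposition on $\ker\rho_\op(f)$ and the observation \cref{eq:zero_product}, and your argument is precisely the natural unpacking of that claim. Your handling of the case $\lambda\notin\specop$ in part~(4) is a minor bookkeeping addition the paper omits, but it changes nothing substantive.
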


The following is the ordered analogue of \cite[Theorem~12.29]{rudin_FUNCTIONAL_ANALYSIS_SECOND_EDITION:1991}.  It applies, in particular, to compact central operators with infinite spectrum. Finite spectra give a similar result for general central operators.

\begin{theorem}\label{res:eigenvalue_expansion}
	Let $\bl$ be a Dedekind complete real Banach lattice with a normal centre.  Take $\op\in\centreblC$, and suppose that $\specop$ is a countable infinite set $\set{\lambda_1,\lambda_2,\dotsc}$. Then:
	\begin{enumerate}
		\item\label{part:eigenvalue_expansion_1}  $\idmap=\sosum_{i=1}^\infty \proj{\lambda_i}$ and $\op=\sosum_{i=1}^\infty\lambda_i\proj{\lambda_i}$ in $\centreblC$;
		\item\label{part:eigenvalue_expansion_2} for $z\in\blC$, there exist unique $z_i$ such that $(\op-\lambda_i\idmap)z_i=0$ and \[
		z=\sosum_{i=1}^\infty z_i
		\]
		in $\blC$. In fact, $z_i=\proj{\lambda_i}z$.
	\end{enumerate}
\end{theorem}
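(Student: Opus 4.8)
The whole statement is essentially a transcription of the Hilbert-space result \cite[Theorem~12.29]{rudin_FUNCTIONAL_ANALYSIS_SECOND_EDITION:1991}, with strong-operator convergence of pairwise orthogonal projections replaced by the strong $\sigma$-order convergence of pairwise disjoint order projections; the tools are the spectral measure $\npm_\op$ and the functional calculus $\rho_\op$ from \cref{res:specific_spectral_measure_uniqueness,res:specific_spectral_measure}. For part~\partref{part:eigenvalue_expansion_1}, I would first observe that $\specop=\bigcup_{i=1}^\infty\{\lambda_i\}$ is a \emph{disjoint} union of Borel sets, so that the $\sigma$-additivity of $\npm_\op$ together with $\npm_\op(\specop)=\idmap$ immediately gives $\idmap=\sosum_{i=1}^\infty\proj{\lambda_i}$ in $\centrebl$, hence in $\centreblC$. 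For the expansion of $\op$ itself, I would set $\varphi_N\coloneqq\sum_{i=1}^N\lambda_i\indicator{\{\lambda_i\}}\in\boundedmeasfunspectrumC$, note that $\rho_\op(\varphi_N)=\sum_{i=1}^N\lambda_i\proj{\lambda_i}$, that $\abs{\varphi_N}\leq\abs{\idfunction}\leq\norm{\op}$ on $\specop$ (the spectral radius equals the norm), and that $\varphi_N\to\idfunction$ pointwise on $\specop$; then part~\partref{part:convergences} of \cref{res:specific_spectral_measure} (dominated convergence for $\rho_\op$) yields $\sum_{i=1}^N\lambda_i\proj{\lambda_i}\soto\rho_\op(\idfunction)=\op$.

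For the existence half of part~\partref{part:eigenvalue_expansion_2}, I would put $z_i\coloneqq\proj{\lambda_i}z$. That $(\op-\lambda_i\idmap)z_i=0$ follows either from $\ker(\op-\lambda_i\idmap)=\proj{\lambda_i}(\blC)$ in \cref{res:eigenvalues}, or from $\op\proj{\lambda_i}=\rho_\op(\idfunction\cdot\indicator{\{\lambda_i\}})=\lambda_i\proj{\lambda_i}$ by multiplicativity of $\rho_\op$. Applying the identity $\idmap=\sosum_i\proj{\lambda_i}$ to $z$ and invoking the equivalence noted in the discussion following \cref{res:general_spectral_measure}, between the $\sigma$-order convergence of a series of pairwise disjoint spectral projections and its strong $\sigma$-order convergence, then gives $z=\idmap z=\sosum_i\proj{\lambda_i}z=\sosum_i z_i$ in $\blC$.

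For uniqueness, suppose $z=\sosum_i w_i$ in $\blC$ with $(\op-\lambda_i\idmap)w_i=0$ for all $i$. By \cref{res:eigenvalues}, $w_i\in\proj{\lambda_i}(\blC)$, so $w_i=\proj{\lambda_i}w_i$. Fix $j$ and apply the order projection $\proj{\lambda_j}$, which, being order continuous, is $\sigma$-order continuous, to the partial sums: $\sum_{i=1}^N\proj{\lambda_j}w_i\soto\proj{\lambda_j}z$. Since $\proj{\lambda_j}\proj{\lambda_i}=0$ for $i\neq j$ by \cref{eq:zero_product} and $\proj{\lambda_j}w_j=w_j$, the left-hand side equals $w_j$ for every $N\geq j$, so uniqueness of $\sigma$-order limits gives $w_j=\proj{\lambda_j}z=z_j$.

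I do not expect a genuine obstacle here: the pointwise-convergence and uniform-bound bookkeeping is routine, and the only slightly delicate points — passing $\sigma$-order convergence of operators through evaluation at a fixed vector, and through an order projection — are handled respectively by the cited equivalence following \cref{res:general_spectral_measure} and by the $\sigma$-order continuity of order projections. The one thing worth double-checking is that the convergence produced by part~\partref{part:convergences} of \cref{res:specific_spectral_measure} is genuinely $\sigma$-order convergence \emph{in $\centreblC$}, which it is, so that the series identities in part~\partref{part:eigenvalue_expansion_1} hold in the stated space.
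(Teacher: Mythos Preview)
Your proposal is correct and follows essentially the same route as the paper's proof: both arguments feed the bounded pointwise-convergent sequences $\sum_{i=1}^N\indicator{\{\lambda_i\}}\to\onefunction$ and $\sum_{i=1}^N\lambda_i\indicator{\{\lambda_i\}}\to\idfunction$ into part~\partref{part:convergences} of \cref{res:specific_spectral_measure} to obtain the two series in part~\partref{part:eigenvalue_expansion_1} and the expansion of $z$, and both establish uniqueness by applying the $\sigma$-order continuous projection $\proj{\lambda_j}$ to the partial sums. Your invocation of the $\sigma$-additivity of $\npm_\op$ for the first identity is just the special case of the dominated convergence argument for characteristic functions, so there is no genuine difference in method.
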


\begin{proof}
For $n=1,2,\dotsc$, set $f_n\coloneqq \sum_{i=1}^n \chi_{\lambda_i}$. \cref{res:specific_spectral_measure} yields that $\idmap=\sosum_{i=1}^\infty \proj{\lambda_i}$ in $\centreblC$, and that $z=\sosum_{i=1}^\infty \proj{\lambda_i}z$ in $\bl$. Clearly, $(\op-\lambda_i\idmap)\proj{\lambda_i}z=0$. If $(\op-\lambda_i\idmap)z_i=0$ and $z=\sosum_{i=1}^\infty z_i$ in $\blC$, then applying $\proj{\lambda_j}$, which is $\sigma$-order continuous, shows that $z_j=\proj{\lambda_j}z$ for all $j$.

The choice $f_n\coloneqq \sum_{i=1}^n \lambda_i\chi_{\lambda_i}$ gives the second convergence in part~\partref{part:eigenvalue_expansion_1}.
\end{proof}

When $\bl$ is order continuous, the series for $z$ in part~\partref{part:eigenvalue_expansion_2} is also unconditionally norm convergent. Since the $\proj{\lambda_j}$ are also norm continuous, the same argument shows that it is then also uniquely determined as a norm convergent series.

When $\specop$ is finite, the series in \cref{res:eigenvalue_expansion} become finite sums.  A little more can be said. Suppose that
$\op=\sum_{i=1}^n \lambda_i\proj{\lambda_i}$ with $\proj{\lambda_i}\cdot\proj{\lambda_j}=0$ when $i\neq j$. Then $p(\op)=\sum_{i=1}^n p(\lambda_i)\proj{\lambda_i}$ for every polynomial $p$. With this observation, the proof of the next result is easy.

\begin{corollary}\label{res:finite_spectrum}
	Let $\bl$ be a Dedekind complete real Banach lattice with a normal centre. Take $\op\in\centreblC$.  Then the following are equivalent:
	\begin{enumerate}
		\item\label{part:finite_spectrum_1} $\specop$ is a finite set $\set{\lambda_1,\ldots,\lambda_n}$;
		\item there exists a polynomial $p$ such that $p(\op)=0$.
	\end{enumerate}
	When this is the case, $\idmap=\sum_{i=1}^n$ and $\op=\sum_{i=1}^n \lambda_i\proj{\lambda_i}$, and every $z\in\blC$ can uniquely be written as a sum
	\[
	z=\sum_{i=1}^n z_i
	\]
	where $\op z_i=\lambda_i z_i$. In fact, $z_i=\proj{\lambda_i}z$.
\end{corollary}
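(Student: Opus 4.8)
The plan is to deduce \cref{res:finite_spectrum} from the functional calculus $\rho_\op$ of \cref{res:specific_spectral_measure} together with the eigenvalue information in \cref{res:eigenvalues} and the finite-spectrum version of \cref{res:eigenvalue_expansion}. First I would prove the equivalence of \partref{part:finite_spectrum_1} and \partref{part:finite_spectrum_2}. For the direction \partref{part:finite_spectrum_1} $\Rightarrow$ \partref{part:finite_spectrum_2}: if $\specop=\set{\lambda_1,\dots,\lambda_n}$, then by the finite-spectrum analogue of \cref{res:eigenvalue_expansion} (stated in the sentence preceding the corollary, and obtained from \cref{res:specific_spectral_measure} applied to the functions $f=\sum_i\chi_{\lambda_i}$ and $f=\sum_i\lambda_i\chi_{\lambda_i}$, which are now genuinely continuous on the finite — hence discrete — space $\specop$) we have $\idmap=\sum_{i=1}^n\proj{\lambda_i}$ and $\op=\sum_{i=1}^n\lambda_i\proj{\lambda_i}$ with $\proj{\lambda_i}\proj{\lambda_j}=0$ for $i\neq j$ by \cref{eq:zero_product}. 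Taking $p(t)\coloneqq\prod_{i=1}^n(t-\lambda_i)$ and using the displayed remark that $p(\op)=\sum_{i=1}^n p(\lambda_i)\proj{\lambda_i}$ for any polynomial $p$ (which follows from the orthogonality of the $\proj{\lambda_i}$ and $\sum_i\proj{\lambda_i}=\idmap$), we get $p(\op)=0$.

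For the converse \partref{part:finite_spectrum_2} $\Rightarrow$ \partref{part:finite_spectrum_1}: suppose $p(\op)=0$ for a nonzero polynomial $p$. Since $\rho_\op$ is a unital homomorphism with $\rho_\op(\idfunction)=\op$, we have $\rho_\op(p\circ\idfunction)=p(\op)=0$, where $p\circ\idfunction$ is the restriction of $p$ to $\specop$. Alternatively, and more directly, by \cref{res:specific_spectral_measure}\partref{part:specific_spectral_measure_3} the spectrum of $p(\op)$ is $p(\specop)$, and since $p(\op)=0$ we must have $p(\specop)=\set{0}$, i.e.\ every point of $\specop$ is a root of $p$; as $p\neq 0$ it has finitely many roots, so $\specop$ is finite. (One should note $\specop\neq\emptyset$ since $\centreblC$ is a unital Banach algebra.)

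It then remains to record the "when this is the case" conclusions. Write $\specop=\set{\lambda_1,\dots,\lambda_n}$. The decompositions $\idmap=\sum_{i=1}^n\proj{\lambda_i}$ and $\op=\sum_{i=1}^n\lambda_i\proj{\lambda_i}$ are exactly the finite versions of \partref{part:eigenvalue_expansion_1} of \cref{res:eigenvalue_expansion}, which hold by the same argument as indicated there with $\sosum$ replaced by an honest finite sum (the sequences $f_n$ stabilise). For the expansion of an arbitrary $z\in\blC$: set $z_i\coloneqq\proj{\lambda_i}z$. Then $\sum_{i=1}^n z_i=\big(\sum_i\proj{\lambda_i}\big)z=\idmap z=z$, and $\op z_i=\op\proj{\lambda_i}z=\big(\sum_j\lambda_j\proj{\lambda_j}\big)\proj{\lambda_i}z=\lambda_i\proj{\lambda_i}z=\lambda_i z_i$ using $\proj{\lambda_j}\proj{\lambda_i}=0$ for $j\neq i$ and $\proj{\lambda_i}^2=\proj{\lambda_i}$ (it is an order projection). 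For uniqueness, if $z=\sum_{i=1}^n w_i$ with $\op w_i=\lambda_i w_i$, then $w_i\in\ker(\op-\lambda_i\idmap)=\proj{\lambda_i}(\blC)$ by \cref{res:eigenvalues}, so $\proj{\lambda_i}w_i=w_i$ and $\proj{\lambda_i}w_j=0$ for $j\neq i$ (since $\proj{\lambda_i}$ annihilates the band $\proj{\lambda_j}(\blC)=\ker(\op-\lambda_j\idmap)$ when $j\neq i$, by \cref{eq:zero_product}); applying $\proj{\lambda_i}$ to $z=\sum_j w_j$ gives $\proj{\lambda_i}z=w_i$, i.e.\ $w_i=z_i$.

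I do not expect a genuine obstacle here: the corollary is essentially a bookkeeping consequence of the already-established spectral calculus, and the only points requiring a word of care are that $\specop$ is nonempty (unital Banach algebra), that the spectral mapping identity $\sigma(p(\op))=p(\specop)$ extends from the continuous functional calculus to polynomials (immediate, since polynomials restrict to continuous functions on the compact set $\specop$), and that the identity $p(\op)=\sum_i p(\lambda_i)\proj{\lambda_i}$ is verified — most cleanly — by expanding powers of $\op=\sum_i\lambda_i\proj{\lambda_i}$ using the orthogonality and idempotency of the $\proj{\lambda_i}$ together with $\sum_i\proj{\lambda_i}=\idmap$ to handle the constant term.
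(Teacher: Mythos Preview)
Your proposal is correct and follows essentially the same route the paper has in mind: the paper's entire argument is the remark preceding the corollary that $p(\op)=\sum_{i=1}^n p(\lambda_i)\proj{\lambda_i}$, after which it declares the proof ``easy,'' and you have simply (and accurately) spelled out those easy steps. The only cosmetic difference is that for the implication from a polynomial identity to finite spectrum you invoke \cref{res:specific_spectral_measure}\partref{part:specific_spectral_measure_3}, whereas the standard polynomial spectral mapping theorem in any unital Banach algebra would suffice without appealing to the functional calculus; either is fine here.
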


In the terminology of \cite{boulabiar_buskes_sirotkin:2006}, \cref{res:finite_spectrum} states that $\op\in\centreblC$ is strongly diagonal if and only if it is algebraic. This is also true for orthomorphisms on a real vector lattice; see \cite[Theorem~3.3]{boulabiar_buskes_sirotkin:2006}.

As for \cite[Theorem~12.30]{rudin_FUNCTIONAL_ANALYSIS_SECOND_EDITION:1991}, the norm continuity of the functional calculus of a central operator gives our final result.

\begin{theorem}\label{res:compact_operators}
	Let $\bl$ be a Dedekind complete real Banach lattice with a normal centre. Take $\op\in\centreblC$. Then $\op$ is compact if and only the following conditions are both met:
	\begin{enumerate}
		\item $\specop$ has no limit point except possibly zero;
		\item $\dim\ker(\op-\lambda\idmap)<\infty$ for all non-zero $\lambda\in\CC$.
	\end{enumerate}	
\end{theorem}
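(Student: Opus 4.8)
The plan is to prove the two implications separately. The forward direction rests only on the classical theory of compact operators on a Banach space and uses nothing about the lattice structure or the functional calculus; the converse is where the functional calculus $\rho_\op$ of \cref{res:specific_spectral_measure} does the work, in complete analogy with the Hilbert-space argument for normal operators.

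For the forward implication, suppose $\op$ is compact on the complex Banach space $\blC$. By the classical Riesz theory of compact operators, every nonzero point of $\specop$ is isolated in $\specop$, so $0$ is the only possible accumulation point of $\specop$, which is condition~(1); and for every nonzero $\lambda\in\CC$ the operator $\op-\lambda\idmap$ is Fredholm (of index $0$ if $\lambda\in\specop$ and invertible otherwise), so $\dim\ker(\op-\lambda\idmap)<\infty$, which is condition~(2). This direction therefore needs only a reference to a standard text.

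For the converse, assume (1) and (2). Fix $n\geq 1$ and put $F_n\coloneqq\{\lambda\in\specop:\abs{\lambda}\geq 1/n\}$. Then $F_n$ is closed in the compact set $\specop$ and, having no accumulation point since $0\notin F_n$, is finite; moreover $\specop\setminus F_n=\{\lambda\in\specop:\abs{\lambda}<1/n\}$ is closed --- a convergent sequence in it has limit either $0$ or an isolated point of $\specop$, hence again in the set --- so $F_n$ is clopen in $\specop$ and $g_n\coloneqq\idfunction\cdot\indicator{F_n}$ lies in $\contspectrumC$. Since $\rho_\op$ is multiplicative, $\rho_\op(\indicator{\mss})=\npm_\op(\mss)$ for Borel $\mss$, and $g_n=\sum_{\lambda\in F_n}\lambda\,\indicator{\{\lambda\}}$ is a finite sum, we obtain $\rho_\op(g_n)=\sum_{\lambda\in F_n}\lambda\,\proj{\lambda}$, whose range is contained in $\sum_{\lambda\in F_n}\proj{\lambda}(\blC)=\sum_{\lambda\in F_n}\ker(\op-\lambda\idmap)$ by \cref{res:eigenvalues}; this is finite-dimensional by condition~(2) (finitely many nonzero eigenvalues, each with finite-dimensional eigenspace), so $\rho_\op(g_n)$ has finite rank and is in particular compact. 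Finally, $\op=\rho_\op(\idfunction)$ by \cref{res:specific_spectral_measure}, and $\idfunction-g_n$ is supported in $\{\lambda\in\specop:\abs{\lambda}<1/n\}$ and bounded there by $1/n$, so $\norm{\idfunction-g_n}_\infty\leq 1/n$; since $\rho_\op$ is a $\ast$-homomorphism of \Calgebras\ and hence contractive from $\boundedmeasfunspectrumC$ with the supremum norm into $\centreblC$ with the operator norm --- the latter by \cref{res:norms_are_equal} --- we conclude $\norm{\op-\rho_\op(g_n)}=\norm{\rho_\op(\idfunction-g_n)}\leq 1/n\to 0$. As the compact operators form a norm-closed subspace of the bounded operators on $\blC$, $\op$ is compact.

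I do not expect a genuine obstacle. The one new ingredient beyond the Hilbert-space prototype \cite[Theorem~12.30]{rudin_FUNCTIONAL_ANALYSIS_SECOND_EDITION:1991} is the identification of $\centreblC$ as a \Calgebra\ in which the order unit norm and the operator norm coincide (\cref{res:isomorphism} and \cref{res:norms_are_equal}); this is exactly what forces the finite-rank truncations $\rho_\op(g_n)$ to converge to $\op$ \emph{in operator norm} rather than merely in order, which is what the definition of compactness requires. The only minor point to verify is that $F_n$ is clopen in $\specop$, so that $g_n\in\contspectrumC$ and the norm estimate for $\rho_\op$ applies; this is immediate from condition~(1), but one could bypass it by working directly in $\boundedmeasfunspectrumC$ and using only that a $\ast$-homomorphism between \Calgebras\ is norm-decreasing.
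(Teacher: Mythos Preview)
Your proof is correct and follows essentially the same route as the paper, which simply notes that the argument for \cite[Theorem~12.30]{rudin_FUNCTIONAL_ANALYSIS_SECOND_EDITION:1991} goes through verbatim once one has the norm continuity of the functional calculus. You have supplied exactly those details: the forward direction by Riesz theory, and the converse by approximating $\op$ in operator norm by the finite-rank truncations $\rho_\op(g_n)$, with the operator-norm convergence guaranteed by \cref{res:isomorphism} and \cref{res:norms_are_equal}.
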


\subsection*{Acknowledgements}  The results in this paper were obtained, in part, during a visit of the first author to Sichuan University. The generous support by the Erasmus+ ICM programme(KA171), National Natural Science Foundation of China (Grand No. 12201439) and Natural Science Foundation of Sichuan Province(Grand No. 2024NSFSC1339) that made this possible is gratefully acknowledged. The authors thank Miloud Chil and Hamza Hafsi for putting forward the possibility of a spectral theorem for orthomorphisms on real vector lattices.

\bibliographystyle{plain}
\urlstyle{same}

\bibliography{general_bibliography}

\end{document}